\tikzset{node distance=3cm, auto}
\newtheorem{thm}{Theorem}[section]
\newtheorem{thmlet}{Theorem}
\newtheorem{cor}[thm]{Corollary}
\newtheorem{corlet}[thmlet]{Corollary}
\newtheorem{prop}[thm]{Proposition}
\newtheorem{lemma}[thm]{Lemma}
\newtheorem{problem}[thm]{Problem}
\newtheorem{definition}[thm]{Definition}
\newtheorem{notation}[thm]{Notation}
\theoremstyle{remark}
\numberwithin{equation}{subsection} 
\numberwithin{figure}{subsection}
\numberwithin{table}{subsection}
\numberwithin{thm}{subsection}
\newtheoremstyle{customremark}
{3pt}
{3pt}
{}
{}
{\bfseries}
{.}
{.5em}
{}
\theoremstyle{customremark}
\newtheorem{rmk_no_diamond}[thm]{Remark}
\newenvironment{rmk}{\begin{rmk_no_diamond} } {\hfill$\Diamond$ \end{rmk_no_diamond}}
\newtheorem{example_no_diamond}[thm]{Example}
\newenvironment{example}{\begin{example_no_diamond} } {\hfill$\Diamond$ \end{example_no_diamond}}
\newtheorem{convention_no_diamond}[thm]{Convention}
\newenvironment{convention}{\begin{convention_no_diamond} } {\hfill$\Diamond$ \end{convention_no_diamond}}
\newenvironment{psmallmatrix}
  {\left(\begin{smallmatrix}}
  {\end{smallmatrix}\right)}
\newcommand{\fib}{{\op{Fib}}}
\newcommand{\bP}{\mathbf{P}}
\newcommand{\len}{\op{len}}
\newcommand{\out}{\op{out}}
\newcommand{\wt}{\widetilde}
\newcommand{\db}{{\frak{p}}}
\newcommand{\ra}{\rightarrow}
\newcommand{\pp}{{\mathfrak{p}}}
\newcommand{\R}{\mathbb{R}}
\newcommand{\NI}{{\noindent}}
\newcommand{\Z}{\mathbb{Z}}
\newcommand{\C}{\mathbb{C}}
\newcommand{\E}{{\mathbb{E}}}
\newcommand{\ka}{\kappa}
\newcommand{\MM}{{\mathbb{M}}}
\newcommand{\Ddiv}{D}
\newcommand{\de}{\delta}
\newcommand{\CP}{\mathbb{CP}}
\newcommand{\calN}{\mathcal{N}}
\newcommand{\calB}{\mathcal{B}}
\newcommand{\calD}{{\mathcal{D}}}
\newcommand{\op}[1]{{\operatorname{#1}}}
\renewcommand{\ll}{\llbracket}
\newcommand{\rr}{\rrbracket}
\definecolor{darkmagenta}{rgb}{0.55, 0.0, 0.55}
\newcommand{\hl}[1] {{\boldmath\textbf{{\color{darkmagenta}#1}}}}
\newcommand{\frakd}{\frak{d}}
\newcommand{\aut}{\op{Aut}}
\newcommand{\NN}{\mathbb{N}}
\newcommand{\lan}{\langle}
\newcommand{\ran}{\rangle}
\newcommand{\ga}{\gamma}
\renewcommand{\setminus}{\smallsetminus}
\renewcommand{\1}{\mathbb{1}}
\newcommand{\ks}{{\operatorname{{\mathsf{S}}}}}
\newcommand{\nn}{\frak{n}}
\newcommand{\mm}{\frak{m}}
\newcommand{\ann}{\op{Ann}}
\renewcommand{\min}{\op{min}}
\newcommand{\ind}{\op{ind}}
\newcommand{\calM}{\mathcal{M}}
\newcommand{\nil}{\varnothing}
\newcommand{\std}{\op{std}}
\newcommand{\om}{\omega}
\newcommand{\veca}{{\vec{a}}}
\newcommand{\vecaprime}{\veca\hspace{.1em}'}
\newcommand{\hooksymp}{\overset{s}\hookrightarrow}
\newcommand{\degmin}{d_\op{min}}
\newcommand{\sss}{\vspace{2.5 mm}}
\renewcommand{\root}{{\hspace{-.35mm}\scalebox{1.15}{\text{\scalebox{.8}[.5]{\textsurd}}}}}
\newcommand{\calDroot}{\calD_\root}
\newcommand{\Q}{\mathbb{Q}}
\newcommand{\be}{\beta}
\newcommand{\ovl}{\overline}
\newcommand{\lineseg}[1]{\hspace{-.5em}\begin{tikzcd}[ampersand replacement=\&, column sep=8ex ] {} \arrow[dash,"{#1}"]{r} \& {} \end{tikzcd}\hspace{-.5em}}
\newcommand{\acc}{\op{acc}}
\newcommand{\bb}{\mathfrak{b}}
\newcommand{\bl}{{\op{Bl}}}
\newcommand{\Si}{\Sigma}
\newcommand{\calS}{\mathcal{S}}
\newcommand{\cone}{\op{Cone}}
\newcommand{\tor}{{\op{tor}}}
\newcommand{\F}{\mathbb{F}}
\renewcommand{\wp}{\mathfrak{W}}
\DeclareMathAlphabet{\mathcalligra}{T1}{calligra}{m}{n}
\newcommand{\tormod}{\mathcal{T}}
\newcommand{\Ndiv}{\mathcal{N}}
\newcommand{\al}{\alpha}
\newcommand{\area}{\op{area}}
\newcommand{\Rsym}{R}
\newcommand{\Ssym}{S}
\newcommand{\Sset}{\mathscr{S}}
\newcommand{\trap}{
    \raisebox{-0.2ex}{\begin{tikzpicture}[scale=0.08]
        \draw[thick] (0,0) -- (2,0) -- (2,1) -- (0,2) -- cycle;
    \end{tikzpicture}}
}
\newcommand{\trapdom}{X_{\trap}}
\newcommand{\calK}{\mathcal{K}}
\newcommand{\FF}{F}
\newcommand{\mmvec}{{\vec{\mm}}}
\newcommand{\bPvec}{{\vec{\bP}}}
\newcommand{\coef}{\op{Coef}}
\newcommand{\for}{\op{pr}}
\newcommand{\ff}{\mathbb{f}}
\newcommand{\nicequot}[2]{#1 \left/ \kern-0.3em\right. #2}
\newcommand{\mmsubN}{\mm_{\scriptscriptstyle \calN}}
\newcommand{\inn}{\op{in}}
\newcommand{\uvl}{\underline}
\newcommand{\Tsym}{T}
\newcommand{\wpcount}{N}
\newcommand{\si}{\sigma}
\newcommand{\gapac}{\mathfrak{g}}
\newcommand{\TT}{\mathbb{T}}
\newcommand{\wh}{\widehat}
\newcommand{\dashover}[2][\mathop]{#1{\mathpalette\df@over{{\dashfill}{#2}}}}
\newcommand{\fillover}[2][\mathop]{#1{\mathpalette\df@over{{\solidfill}{#2}}}}
\newcommand{\df@over}[2]{\df@@over#1#2}
\newcommand\df@@over[3]{%
  \vbox{
    \offinterlineskip
    \ialign{##\cr
      #2{#1}\cr
      \noalign{\kern1pt}
      $\m@th#1#3$\cr
    }
  }%
}
\newcommand{\dashfill}[1]{%
  \kern-.5pt
  \xleaders\hbox{\kern.5pt\vrule height.4pt width \dash@width{#1}\kern.5pt}\hfill
  \kern-.5pt
}
\newcommand{\dash@width}[1]{%
  \ifx#1\displaystyle
    2pt
  \else
    \ifx#1\textstyle
      1.5pt
    \else
      \ifx#1\scriptstyle
        1.25pt
      \else
        \ifx#1\scriptscriptstyle
          1pt
        \fi
      \fi
    \fi
  \fi
}
\newcommand{\solidfill}[1]{\leaders\hrule\hfill}
\newcommand{\oset}[3][0ex]{%
  \mathrel{\mathop{#3}\limits^{
    \vbox to#1{\kern-2\ex@
    \hbox{$\scriptstyle#2$}\vss}}}}
\newcounter{countitems}
\newcounter{nextitemizecount}
\newcommand{\setupcountitems}{%
  \stepcounter{nextitemizecount}%
  \setcounter{countitems}{0}%
  \preto\item{\stepcounter{countitems}}%
}
\newcommand{\computecountitems}{%
  \edef\@currentlabel{\number\c@countitems}%
  \label{countitems@\number\numexpr\value{nextitemizecount}-1\relax}%
}
\newcommand{\nextitemizecount}{%
  \getrefnumber{countitems@\number\c@nextitemizecount}%
}
\newcommand{\previtemizecount}{%
  \getrefnumber{countitems@\number\numexpr\value{nextitemizecount}-1\relax}%
}
\computecountitems\ifnumcomp{\previtemizecount}{>}{3}{\end{multicols}}{}}
\title{Sesquicuspidal curves, scattering diagrams, and symplectic nonsqueezing}
\author{Dusa McDuff and Kyler Siegel\thanks{K.S. is partially supported by NSF grant DMS-2105578}}
\begin{document}

\maketitle

\begin{abstract}
We solve the stabilized symplectic embedding problem for four-dimensional ellipsoids into the four-dimensional round ball. 
The answer is neatly encoded by a piecewise smooth function which exhibits a phase transition from an infinite Fibonacci staircase to an explicit rational function related to symplectic folding.
Our approach is based on a bridge between quantitative symplectic geometry and singular algebraic curve theory, and a general framework for approaching both topics using scattering diagrams. 
In particular, we construct a large new family of rational algebraic curves in the complex projective plane with a $(p,q)$ cusp singularity, many of which solve the classical minimal degree problem for plane curves with a prescribed cusp.
A key role is played by the tropical vertex group of Gross--Pandharipande--Siebert and ideas from mirror symmetry for log Calabi--Yau surfaces.
Many of our results also extend to other target spaces, e.g. del Pezzo surfaces and more general rational surfaces.
\end{abstract}

\tableofcontents

\section{Introduction}\label{sec:intro}

Since Gromov's discovery of his famous nonsqueezing theorem in \cite{gromov1985pseudo}, a primary goal of quantitative symplectic geometry has been to put explicit nontrivial restrictions on Hamiltonian diffeomorphisms.
In particular, we have the following central problem:
\begin{problem}[ellipsoid embedding problem]\label{prob:REEP}
For which $\veca,\vecaprime \in \R_{>0}^n$ does there exist a symplectic embedding $E(\veca) \hooksymp E(\vecaprime)$?
\end{problem} 
\NI Here $E(\veca) := \left\{\pi\sum\limits_{i=1}^n \tfrac{(x_i^2+y_i)^2}{a_i}  \leq 1\right\}$ denotes the symplectic ellipsoid in $\R^{2n}$ with area factors $\veca = (a_1,\dots,a_n) \in \R_{>0}^n$, endowed with the restriction of the standard symplectic form $\om_\std = \sum\limits_{i=1}^n dx_i \wedge dy_i$. By \hl{symplectic embedding} we mean a smooth embedding which pulls back the symplectic form on the target to that of the source (this is equivalent to the existence of a Hamiltonian diffeomorphism $\Phi: \R^{2n} \ra \R^{2n}$ satisfying $\Phi(E(\veca)) \subset E(\vecaprime)$ -- see \cite[\S4.4]{Schlenk_old_and_new}).

In dimension $4$, the monotonicity properties of embedded contact homology inspired Hofer to
 conjecture a necessary and sufficient condition for the existence of such an embedding in dimension
$2n=4$, and this was established in one direction by Hutchings~\cite{Hutchings_quantitative_ECH} and in the other by McDuff~\cite{McDuff_Hofer_conjecture}.  Using this, 
a complete solution to Problem~\ref{prob:REEP} with target space the round ball $E(\vecaprime) = E(1,1) =: B^4$
was worked out explicitly by McDuff--Schlenk in \cite{McDuff-Schlenk_embedding}, building on various works \cite{mcduff2009symplectic,mcduff1994symplectic,mcduff_from_def_to_iso,biran1997symplectic,li2001uniqueness,li_li_2002} and with input from Seiberg--Witten theory. 
At present, what happens in higher dimensions is so little understood that there is no  conjecture as to the general answer.  However,
there is an intriguing ``stabilized'' regime with $a_3,\dots,a_n \gg a_1,a_2$ and $a_3',\dots,a_n' \gg a_1',a_2'$ which appears to serve as a bridge between four dimensions and higher dimensions. Using work by \cite{Pelayo-Ngoc_hofer_question} that allows one to pass from embeddings of a compact ellipsoid with $a_3,\dots,a_n \gg a_1,a_2$ to embeddings of the noncompact domain $E(a_1,a_2) \times \R^{2N}$, the question can be  formulated as follows: 
\begin{problem}[stabilized ellipsoid embedding problem]\label{prob:SEEP}
 For which $a_1,a_2,a_1',a_2' \in \R_{>0}$ and $N \in \Z_{\geq 1}$ does there exist a symplectic embedding $E(a_1,a_2) \times \R^{2N} \hooksymp E(a_1',a_2') \times \R^{2N}$?
\end{problem}

In this paper, we give a complete solution to the stabilized ellipsoid embedding problem in the case that the target is the stabilized round ball (i.e. $a_1'=a_2'$), along with various other target spaces.
In other words, we compute the \hl{stabilized ellipsoid embedding function} $c_{B^4 \times \R^{2N}}$, where for any $a \in \R_{\geq 1}$, $N \in \Z_{\geq 1}$, and symplectic four-manifold $X^4$ we put
\begin{align}
c_{X^4 \times \R^{2N}}(a) := \inf\left\{\mu\;|\; E(\tfrac{1}{\mu},\tfrac{a}{\mu}) \times \R^{2N} \hooksymp X^4 \times \R^{2N}\right\}.
\end{align}
Let $\fib_1 = 1, \fib_2 = 1, \fib_{k+2} = \fib_k + \fib_{k+1}$ denote the Fibonacci numbers, 
and put $\al_k := \tfrac{\fib_{2k+1}^2}{\fib_{2k-1}^2}$ and $\be_k := \tfrac{\fib_{2k+3}}{\fib_{2k-1}}$ for $k \in \Z_{\geq 1}$, so that we have
\begin{align*}
\al_0 := 1 < \be_0 := 2 < \al_1 = 4 < \be_1 = 5 < \al_2 = \tfrac{25}{4} < \be_2 = \tfrac{13}{2} < \al_3 = \tfrac{169}{25} < \cdots
\end{align*}
and $\lim\limits_{k \rightarrow \infty}\al_k = \lim\limits_{k \rightarrow \infty}\be_k = \tau^4 := \tfrac{7+3\sqrt{5}}{2} \approx 6.85$ (here $\tau$ is the golden ratio). 
\begin{thmlet}\label{thmlet:main_SEEP}
For any $N \in \Z_{\geq 1}$, the stabilized ellipsoid embedding function of the round four-ball $B^4$ is given by:
\begin{align*}
c_{B^4 \times \R^{2N}}(a) = 
\begin{cases} \frac{1}{\sqrt{\al_k}} \cdot a & \text{if}\;\; a \in [\al_k,\be_k] \text{ for some }k \in \Z_{\geq 0}\\ 
\sqrt{\al_{k+1}} & \text{if}\;\; a \in [\be_k,\al_{k+1}] \text{ for some }k \in \Z_{\geq 0}\\ 
\frac{3a}{a+1} & \text{if}\;\; a \in [\tau^4,\infty).
\end{cases}
\end{align*}
\end{thmlet}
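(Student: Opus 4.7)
The plan is to establish matching upper and lower bounds on $c_{B^4 \times \R^{2N}}(a)$, treating each direction separately on the staircase regime $a \in [1, \tau^4]$ and the smoothed regime $a \in [\tau^4, \infty)$.

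For the upper bound on $[1, \tau^4]$, the displayed formula agrees with the four-dimensional ellipsoid embedding function $c_{B^4}(a)$ computed by McDuff--Schlenk, so trivially stabilizing any four-dimensional embedding $E(\tfrac{1}{\mu}, \tfrac{a}{\mu}) \hooksymp B^4$ yields $c_{B^4 \times \R^{2N}}(a) \leq c_{B^4}(a)$ in this range. For $a \in [\tau^4, \infty)$, the upper bound $c \leq 3a/(a+1)$ follows from the explicit stabilized embeddings of Hind and Cristofaro-Gardiner, which genuinely exploit the extra $\R^{2N}$ factor to bypass the four-dimensional volume obstruction.

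For the lower bound, the key tool is the general principle, developed in earlier sections of the paper (in the spirit of McDuff--Siegel and Cristofaro-Gardiner--Hind), that a rational algebraic curve in $\CP^2$ with a single $(p,q)$ cusp of degree $d$ produces a stabilized symplectic obstruction, namely an explicit lower bound on $c_{B^4 \times \R^{2N}}(a)$ as a function of $(p, q, d, a)$. Applied to the classical Fibonacci family of cusp curves, this recovers the staircase portion of the formula on $[1, \tau^4]$. Applied to the newly constructed continuum of sesquicuspidal curves, whose degrees $d$ are tuned to the cusp parameters $(p,q)$ so that the associated obstruction is tangent to the graph of $c = 3a/(a+1)$ at the point $a = q/p$, and with $q/p$ ranging densely over $[\tau^4, \infty)$, taking the envelope of these tangent-line obstructions reconstructs $3a/(a+1)$ as the sharp lower bound in the smoothed regime.

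The main obstacle is precisely the existence of this continuum of sesquicuspidal curves with the correct numerical invariants $(p, q, d)$ for $q/p \in [\tau^4, \infty)$, which is the principal geometric content of this paper. Their construction relies on scattering diagrams, the tropical vertex group, and positivity in the associated relative Gromov--Witten theory, and it is responsible for the phase transition at the golden ratio fourth power: a discrete Fibonacci staircase below $\tau^4$, and a continuous family above. Once these curves and the obstruction principle are in place, Theorem~A reduces to an arithmetic verification that the obstructions match the piecewise formula on each interval, combined with the explicit upper bound constructions on $[\tau^4, \infty)$.
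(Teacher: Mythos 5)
Your proposal traces essentially the same route as the paper's proof: the lower bounds come from the index-zero sesquicuspidal curves of Theorems~\ref{thmlet:sesqui_plane_curves}/\ref{thmlet:curves_in_X} fed through the stabilized obstruction machinery of Theorem~\ref{thm:stab_obs_from_curve} and Corollary~\ref{cor:stab_obs_from_curve}; the upper bound on $[1,\tau^4]$ is the trivial stabilization of McDuff--Schlenk; and the upper bound on $[\tau^4,\infty)$ is the Hind/Cristofaro-Gardiner folding construction. One clarification on how the lower bound is completed for $a>\tau^4$: the paper does not take an envelope of tangent lines. A curve with cusp parameters $(p,q)$ and $d=\tfrac{1}{3}(p+q)$ directly yields the pointwise inequality $c_{B^4\times\R^{2N}}(p/q)\geq\tfrac{p}{d}=\tfrac{3(p/q)}{(p/q)+1}$, which meets the target function exactly at $a=p/q$ but is not tangent to it (the obstruction propagated via scaling is a secant through the origin, with slope $\tfrac{3}{a+1}$ rather than $\tfrac{3}{(a+1)^2}$); the argument then concludes by density of the achievable ratios $p/q$ in $[\tau^4,\infty)$ together with continuity of $c_{B^4\times\R^{2N}}$. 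Also note the minor slip $a=q/p$ for $a=p/q$.
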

\NI See Figure~\ref{fig:stab_fib} for an illustration. 
  \begin{figure}
  \includegraphics[scale=.8]{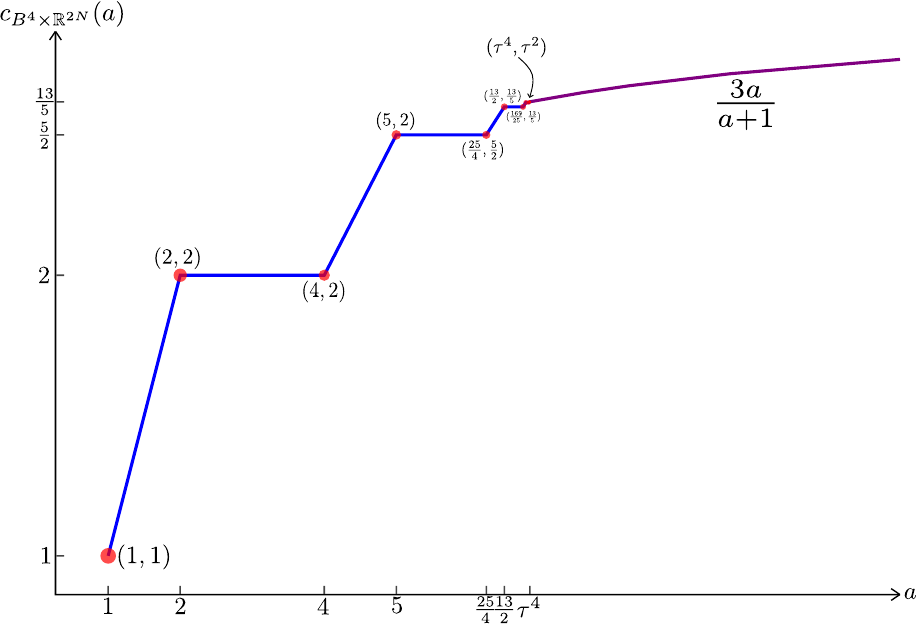}
  \caption{The stabilized ellipsoid embedding function $c_{B^4 \times \R^{2N}}(a)$ computed in Theorem~\ref{thmlet:main_SEEP} for $N \in \Z_{\geq 1}$. Note that for $a$ below the accumulation point $\tau^4$ this ``infinite staircase'' agrees with its unstabilized counterpart $c_{B^4}(a)$, while for $a > \tau^4$ the function $\tfrac{3a}{a+1}$ is a new purely high dimensional phenomenon.}
  \label{fig:stab_fib} 
  \end{figure}

Various special cases of Theorem~\ref{thmlet:main_SEEP} have been established previously, e.g. in \cite{HK,CGH,Ghost,Mint,chscI}. 
In particular, the embedding concocted by Hind in \cite{hind2015some} based on symplectic folding \cite{lalonde1995geometry,schlenk2003symplectic,Guth_polydisks} gives the upper bound $c_{B^4 \times \R^{2N}}(a) \leq \tfrac{3a}{a+1}$ for all $a \in \R_{\geq 1}$.  Hind also shows in \cite{hind2015some} that there is equality at all integers $a = 3k-1, k\ge 1$.
Obstructions giving matching lower bounds at the outer corner values $a = \be_0,\be_1,\be_2,\dots$ were proved using embedded contact homology in \cite{CGH}, and in fact these suffice to establish $c_{B^4 \times \R^{2N}}(a) = c_{B^4}(a)$ for all $a \in [1,\tau^4]$ by elementary scaling and monotonicity considerations. 
Thus the main new content of Theorem~\ref{thmlet:main_SEEP} is the lower bound $c_{B^4 \times \R^{2N}}(a) \geq \tfrac{3a}{a+1}$ for $a \in (\tau^4,\infty)$, though our approach also naturally recovers all previously known lower bounds as special cases. 

From the aforementioned works emerges a procedure for obstructing stabilized ellipsoid embeddings via moduli spaces of punctured pseudoholomorphic curves (\`a la symplectic field theory) with genus zero and one negative end, with prescribed asymptotic Reeb orbits. The main difficulty is to prove that the relevant moduli spaces are nonempty. 
For certain values of $a > \tau^4$, the necessary curves have been constructed via Hutchings--Taubes obstruction bundle gluing \cite{Mint} or neck-stretching closed rational curves with point constraints \cite{hind2015some, Ghost}, but attempts to push these methods further  yield diminishing returns.

Meanwhile, the papers \cite{chscI,SDEP} use algebraic structures arising in SFT to give recursive formulas (and even a closed tree formula in \cite{tree_formula}) which in principle can enumerate all of the relevant moduli spaces, where in particular a nonzero count implies nonemptiness. 
 With the aid of computer calculations these have been used to verify Theorem~\ref{thmlet:main_SEEP} in many additional cases, but proving general nonvanishing results by direct combinatorial analysis of these algorithms appears to be out of reach.\footnote{While some of these results rely on certain structural properties of symplectic field theory that are not yet fully established, the present article is entirely independent of these papers and does not depend on any virtual fundamental techniques in symplectic field theory. In the other direction, the results of this paper could be applied to compute the higher symplectic capacities $\{\gapac_\bb\}$ as in \cite{chscI} for the round four-ball and various other domains.}

Still more recently, in \cite{cusps_and_ellipsoids} we reformulated the above SFT moduli spaces in terms of closed rational pseudoholomorphic curves in $\CP^2$ with a distinguished $(p,q)$ cusp singularity, i.e. the singularity modeled on $\{x^p + y^q = 0\} \subset \C^2$.
 Since these curves could be a fortiori algebraic, this opens up the possibility of importing techniques from algebraic geometry in order to produce singular algebraic curves whose existence implies the relevant obstructions.\footnote{We will say that an algebraic curve $C$ in an algebraic surface $X$ has a $(p,q)$ cusp at a point $\pp \in C$ if there are open neighborhoods $\pp \in U \subset \CP^2$ and $(0,0) \in V \subset \C^2$ and a homeomorphism of pairs $(U,C \cap U) \cong (V,V \cap \{x^p + y^q = 0\})$.
Note that in this paper we are only considering singularities up to topological (as opposed to analytical) equivalence (see e.g. the notations and conventions section in \cite{greuel2018singular}).}
 Indeed, in \cite{mcduff2024singular} we observed that the obstructions at the outer corners $a = \be_0,\be_1,\be_2,\dots$ in Theorem~\ref{thmlet:main_SEEP} are carried by certain unicuspidal rational plane curves whose existence had been long known in certain circles (see \cite{orevkov2002rational,kashiwara_hiroko,fernandez2006classification}).

Under this reformulation, the curves relevant for $a \in (\tau^4,\infty)$ must have some additional singularities away from the distinguished cusp; such curves were called \hl{sesquicuspidal} in \cite{cusps_and_ellipsoids} because they generalize unicuspidal curves (i.e. those having one cusp and no other singularities).
We will deduce Theorem~\ref{thmlet:main_SEEP} from the following existence result for singular rational plane curves, whose formulation involves only classical algebraic geometry.

\begin{thmlet} \label{thmlet:sesqui_plane_curves}
Fix coprime integers $p > q > 1$ with $p+q$ divisible by $3$, and put $d := \tfrac{1}{3}(p+q)$.
There exists a rational algebraic curve in $\CP^2$ with a $(p,q)$ cusp and degree $d$ if and only if one of the following holds: 
  \begin{enumerate}[label=(\alph*)]
    \item\label{item:<tau^4} $(p,q) = (\fib_{k+4},\fib_{k})$ for some $k \in \Z_{\geq 3}$ odd
    \item\label{item:>tau_4} $p/q > \tau^4 := \tfrac{7+3\sqrt{5}}{2}$. 
  \end{enumerate} 
Moreover, these curves can be taken to be $(p,q)$-well-placed with respect to any given irreducible nodal cubic $\calN \subset \CP^2$.
\end{thmlet}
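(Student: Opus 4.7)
The plan is to split the equivalence into its two directions. For necessity, the standard genus-delta inequality for a rational degree-$d$ plane curve with a $(p,q)$ cusp (and possibly other singularities) requires $(d-1)(d-2)\geq(p-1)(q-1)$; substituting $d=(p+q)/3$ turns this into the Markov-type inequality $p^2-7pq+q^2+9\geq 0$. A short Cassini/Catalan identity computation shows that the Fibonacci pairs $(F_{k+4},F_k)$ (odd $k\geq 3$) satisfy $p+q=3F_{k+2}$ and $pq=F_{k+2}^2+1$, hence lie exactly on the boundary $p^2-7pq+q^2=-9$. Recasting the boundary equation via $v:=(2p-7q)/3$ as the Pell equation $v^2-5q^2=-4$ and appealing to the classical Lucas--Fibonacci description of its solutions, one verifies these are the only coprime solutions with $q>1$. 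Since $p^2-7pq+q^2$ decreases rapidly away from this boundary as $p/q$ drops below $\tau^4$, no further pair in the regime $p/q\leq\tau^4$ can satisfy the necessary inequality, ruling out everything outside (a). For $p/q>\tau^4$ the inequality is automatic, so (b) needs no check.

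For sufficiency in case (a), the classical unicuspidal rational curves of Orevkov and Fern\'andez de Bobadilla et al.\ \cite{orevkov2002rational,kashiwara_hiroko,fernandez2006classification} realize the $(F_{k+4},F_k)$-cusp in degree $F_{k+2}$, and $(p,q)$-well-placedness with respect to any prescribed $\calN$ follows from a standard deformation argument: the local moduli at the cusp together with the moduli of nodal cubics through fixed points supply enough freedom to prescribe any admissible contact profile on a dense open locus.

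The substantive new content is sufficiency in case (b), which I would prove by enumerative means following the outline in the abstract. First, translate the existence of a $(p,q)$-well-placed sesquicuspidal degree-$d$ rational plane curve into the nonvanishing of a specific log/relative Gromov--Witten invariant of the log Calabi--Yau pair $(\CP^2,\calN)$: the contact profile with $\calN$ dictated by the well-placement is exactly the contact data entering the invariant. Second, apply the bijection of ingredient (i) between curves in $(\CP^2,\calN)$ and curves in a suitable nontoric blowup of a toric surface to identify this invariant with a relative Gromov--Witten invariant of a toric-birational model. Third, apply the Gross--Pandharipande--Siebert correspondence to express this invariant as a coefficient of a wall function in the scattering diagram governed by the tropical vertex group.

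The main obstacle is the final step: showing this coefficient is strictly positive for every pair with $p/q>\tau^4$. Recent positivity results for scattering diagrams (ingredient (iii)) guarantee the coefficient is a sum of nonnegative integer contributions, so it suffices to produce at least one broken-line or tropical-disk configuration contributing nontrivially. I expect the golden-ratio threshold $\tau^4$ to enter here as precisely the slope above which the asymptotic scattering rays admit a valid outgoing configuration, while below it only the sporadic Fibonacci lattice points survive -- matching exactly the strict dichotomy in the theorem statement. Once such a tropical contribution is produced, positivity prevents cancellation, nonvanishing of the invariant yields an actual algebraic curve, and the prescribed $\calN$-well-placement is built into the construction, establishing the moreover clause simultaneously.
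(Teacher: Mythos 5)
Your necessity argument via adjunction is correct and is essentially an elementary reworking of what the paper achieves through the mutation $\FF_X$ and light-cone inequality in \S\ref{sec:symmetries} (or, alternatively, reads off directly from the shape of the scattering diagram). Substituting $d=(p+q)/3$ into $(d-1)(d-2)\geq(p-1)(q-1)$ gives $p^2-7pq+q^2+9\geq 0$; since $p+q\equiv 0\pmod 3$ forces $p^2-7pq+q^2\equiv(p+q)^2\equiv 0\pmod 9$, the only possible negative value is $-9$, and your Pell reduction $v^2-5q^2=-4$ with $v=(2p-7q)/3$ does pin down the Fibonacci pairs. This part is fine and is a legitimate alternative route.

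The sufficiency direction has two real gaps. For case (a), the ``standard deformation argument'' for well-placedness does not work: the condition $d=(p+q)/3$ is precisely the index-zero condition, so the classical unicuspidal curves of Orevkov and Fern\'andez de Bobadilla et al.\ are rigid and cannot be perturbed to arrange a prescribed contact profile with a chosen $\calN$. Nor is it clear that one can simply move $\calN$ to achieve well-placement, since that requires $C\cap\calN=\{\db\}$ with prescribed local intersection numbers against both branches, which is not automatic for a generic nodal cubic through the cusp point. The paper produces well-placed curves in \emph{both} cases (a) and (b) uniformly through the scattering correspondence, so well-placement is built in; alternatively one can cite \cite[Thm.~B]{mcduff2024singular}, but the deformation claim as stated is false.

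The more serious gap is in case (b): you do not actually prove that the scattering coefficient is nonzero. GHKK positivity only says the wall-function exponents are nonnegative integers; it does not by itself produce a nonzero contribution. Your plan to ``produce at least one broken-line or tropical-disk configuration contributing nontrivially'' and your expectation that the $\tau^4$ threshold will emerge from this is precisely the nontrivial combinatorial content that must be established, not a step that can be deferred. The paper resolves this by invoking Gross--Pandharipande's theorem (\cite[\S 4.7]{gross2010quivers}, recalled here as Theorem~\ref{thm:gp_nonzero}): every ray in the dense region of $\ks(\calD_{e_1,e_2}^{\ell,\ell})_\min$ for $\ell\geq 3$ has nontrivial wall function, proved via Reineke's identification \cite{reineke2010poisson} of scattering coefficients with Euler characteristics of moduli spaces of semistable Kronecker quiver representations, whose nonemptiness reduces to $a^2+b^2-\ell ab\leq 1$ and whose Euler characteristics are positive since the moduli spaces are smooth projective with vanishing odd cohomology. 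Without this input, your argument is circular. You also omit the change-of-lattice trick of \S\ref{subsec:change_lattice}, which is what relates the scattering diagram $\calD_{\mm_1,\mm_2}^{1,1}$ (with $\mm_1=(1,0)$, $\mm_2=(-1,-3)$) naturally attached to the toric model $\tormod_{\CP^2}$ to the diagram $\calD_{e_1,e_2}^{3,3}$ actually analyzed by Gross--Pandharipande; the multiplicity $\ell=3$ appears only after this reduction, and without it the appeal to their theorem would not apply.
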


To explain the last sentence, note that such an $\calN \subset \CP^2$ is \hl{uninodal} (i.e. has one node and is otherwise nonsingular), say with local branches $\calB_-,\calB_+$ near its double point $\db \in \calN$. 
Following \cite[Def. 2.2.4]{mcduff2024singular}, we say that a curve $C$ is \hl{$(p,q)$-well-placed} with respect to $\calN$ if $C \cap \calN = \{\db\}$, $C$ is locally irreducible near $\db$, and we have local intersection numbers $(C \cdot \calB_-)_\db = p$ and $(C \cdot \calB_+)_\db = q$.
Since any two uninodal cubics in $\CP^2$ are projectively equivalent, for concreteness we often take $\calN = \calN_0 := \{x^3 + y^3 = xyz\}$.  
The well-placed condition is an auxiliary constraint on curves which looks unmotivated at first glance, but will be crucial
for establishing a connection with scattering diagrams in \S\ref{sec:from_wp_to_sd_and_back}, and also for exhibiting the symmetries discussed in \S\ref{sec:symmetries}.

\begin{rmk}\label{rmk:pq}
The condition $d = \tfrac{1}{3}(p+q)$ for a curve $C$ is equivalent to the \hl{index} $\ind_\C^{p,q}(C) := c_1(C) - p -q$ being zero, i.e. $C$ becomes rigid (at least virtually) after imposing a maximal order jet constraint at the cusp (see \cite[\S3]{cusps_and_ellipsoids} for details).  
\end{rmk}

\sss

We will see that in case \ref{item:<tau^4} of Theorem~\ref{thmlet:sesqui_plane_curves} we have $p/q < \tau^4$ and $\de_{d,p,q} = 0$, where $\de_{d,p,q} := \tfrac{1}{2}(d-1)(d-2) - \tfrac{1}{2}(p-1)(q-1)$ is the algebraic count of singularities away from the distinguished cusp, while in case \ref{item:>tau_4} we have $\de_{d,p,q} \geq 1$.
The curves in Theorem~\ref{thmlet:sesqui_plane_curves} are naturally organized by their value of $\de_{d,p,q} \in \Z_{\geq 0}$, which is preserved by certain symmetries $\Phi_{\CP^2},\Psi_{\CP^2}$ discussed in \S\ref{sec:symmetries}. In particular, putting $\de(C) := \de_{d,p,q}$ for a degree $d$ rational algebraic plane curve $C$ with a $(p,q)$ cusp, there are infinitely many other such curves (of arbitrarily high degree) with the same value of $\de(C)$.

\begin{rmk}
It follows a posteriori that the adjunction formula is a complete obstruction to the existence of the curves considered in Theorem~\ref{thmlet:sesqui_plane_curves}. In other words, for any $(p,q)$ and $d = \tfrac{1}{3}(p+q)$ not covered by \ref{item:<tau^4} or \ref{item:>tau_4}, we have $\de_{d,p,q} < 0$ (see \S\ref{sec:symmetries}).
\end{rmk}

An interesting feature of the curves in Theorem~\ref{thmlet:sesqui_plane_curves} is that they have very low degree relative to the cusp. 
Indeed, recall the following classical problem in algebraic curve theory (see e.g. \cite[Intro.]{greuel2021plane} or \cite[\S4.2.1(A)]{greuel2018singular}):

\begin{problem}\label{prob:min_deg}
  Determine the minimal degree $\degmin(p,q)$ of any\footnote{We could alternatively restrict to rational curves here. Since our constructions give rational curves and the adjunction obstruction holds a fortiori for higher genus curves, Corollary~\ref{cor:most_d_minimal} below holds equally in this case.} algebraic curve in $\CP^2$ which has a $(p,q)$ singularity (and possibly other singularities).
\end{problem}

\NI A state-of-the-art result can be found in \cite[Thm. 3.10]{greuel2021plane}, which implies $\degmin(p,q) \leq 3\sqrt{(p-1)(q-1)} - 1$ for all coprime $p,q \in \Z_{\geq 1}$.
We refer the reader to \cite[\S4.5.5]{greuel2018singular} or \cite[\S3.3]{greuel2021plane} for historical context and more general results.
By combining Theorem~\ref{thmlet:sesqui_plane_curves} with adjunction considerations, we resolve Problem~\ref{prob:min_deg} for ``most'' values of $(p,q)$ with $p+q$ divisible by $3$.
As a shorthand, when $p+q$ is divisible by $3$ we will put $\de_{p,q} := \de_{d,p,q}$ with $d := \tfrac{1}{3}(p+q)$.

\begin{corlet}\label{cor:most_d_minimal}

Let $C$ be one of the rational algebraic plane curves provided by Theorem~\ref{thmlet:sesqui_plane_curves}, say with a $(p,q)$ cusp and degree $d = \tfrac{1}{3}(p+q)$.
Then we have 
\begin{align}\label{eq:degmin_as_expected}
\degmin(p,q) = \tfrac{1}{3}(p+q)
\end{align}
unless $d \leq 2 + \de_{p,q}$.
In particular, \eqref{eq:degmin_as_expected} holds if $\de_{p,q} \leq 4$, and in general it holds for all but finitely many
coprime $p,q \in \Z_{\geq 1}$ with  $p+q$ divisible by $3$ and fixed value of $\de_{p,q}$.
\end{corlet}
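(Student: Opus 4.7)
The upper bound $\deg_{\min}(p,q) \leq d := \tfrac{1}{3}(p+q)$ is immediate from Theorem~\ref{thmlet:sesqui_plane_curves}. For the lower bound, suppose $C' \subset \CP^2$ has degree $d' < d$ and a $(p,q)$ singularity. Since $\gcd(p,q) = 1$ the model $\{x^p + y^q = 0\}$ is locally irreducible, so the singular point lies on a single irreducible component of $C'$; replacing $C'$ by that component preserves the singularity and can only decrease the degree, so we may take $C'$ to be irreducible. The genus formula then gives
\[
(d'-1)(d'-2) \;\geq\; 2\sum_{\pp \in \sing(C')} \delta_{\pp}(C') \;\geq\; (p-1)(q-1),
\]
using that the $(p,q)$ cusp alone contributes $\tfrac{1}{2}(p-1)(q-1)$ to the sum of delta invariants. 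As the left side is strictly increasing in $d' \geq 2$, the binding case is $d' = d-1$; substituting $\delta_{p,q} = \tfrac{1}{2}\big[(d-1)(d-2) - (p-1)(q-1)\big]$ converts the failure of adjunction at $d' = d-1$ into the condition $(p-1)(q-1) > (d-2)(d-3)$, equivalently $d > 2 + \delta_{p,q}$. This yields \eqref{eq:degmin_as_expected}.

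The ``all but finitely many'' clause is then immediate: for fixed $\delta_0$, the inequality $d \leq 2 + \delta_0$ forces $p + q \leq 3(2 + \delta_0)$, leaving only finitely many pairs $(p, q)$.

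The main obstacle is the ``$\delta_{p,q} \leq 4$'' clause, which I would handle via a Pell-type rigidity. From $p + q = 3d$ and $pq = d^2 + 1 - 2\delta_{p,q}$, one sees that $(p-q)^2$ equals the discriminant
\[
\Delta \;:=\; 5d^2 + 8\delta_{p,q} - 4,
\]
which must therefore be a perfect square. Reducing modulo $5$ gives $\Delta \equiv 2 \pmod{5}$ when $\delta_{p,q} = 2$ and $\Delta \equiv 3 \pmod 5$ when $\delta_{p,q} = 4$, neither of which is a quadratic residue, so $\delta_{p,q} \in \{2,4\}$ cannot occur. The remaining values $\delta_{p,q} \in \{0,1,3\}$ correspond to Pell-like equations $k^2 - 5d^2 \in \{-4, +4, +20\}$ whose integer solutions are parametrized by Fibonacci and Lucas numbers; a direct check shows that the smallest admissible $d$ (with $q \geq 2$ and $\gcd(p,q) = 1$) is $d = 5, 21, 11$ respectively, each comfortably exceeding $2 + \delta_{p,q}$. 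Since the Pell solutions grow geometrically in $d$, all larger solutions satisfy the same inequality, completing the argument.
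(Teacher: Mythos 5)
Your proof of the main assertion (that $\degmin(p,q) = d$ unless $d \leq 2 + \de_{p,q}$) and of the ``all but finitely many'' clause matches the paper's: both reduce to ruling out a degree $d-1$ curve via the genus/adjunction bound, and both use the same observation that $d \leq 2+\de_0$ pins $p+q$ to a finite range.

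For the ``$\de_{p,q} \leq 4$'' clause, however, you take a genuinely different and substantially more elaborate route. The paper notes that $p>q\geq 2$ forces $(p-1)(q-1)\geq 3(d-1)$, whence $\de_{p,q} = \tfrac{1}{2}(d-1)(d-2)-\tfrac{1}{2}(p-1)(q-1)\leq\tfrac{1}{2}(d-1)(d-5)$; combining this with $d\leq 2+\de_{p,q}$ gives $\de_{p,q}\leq\tfrac{1}{2}(\de_{p,q}+1)(\de_{p,q}-3)$, i.e. $\de_{p,q}\geq 5$, in two lines of inequality manipulation. You instead pass to the discriminant $(p-q)^2 = 5d^2 + 8\de_{p,q} - 4$, reduce modulo $5$ to rule out $\de_{p,q}\in\{2,4\}$ entirely, and then analyze the Pell-type equations $k^2 - 5d^2 \in\{-4,4,20\}$ for $\de_{p,q}\in\{0,1,3\}$ to locate the minimal admissible $d$ in each case. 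This is correct --- the mod $5$ computation and the Fibonacci/Lucas parametrizations check out, and the minimal admissible degrees are indeed $5, 21, 11$ --- but it is considerably heavier machinery than the task requires, and it incidentally buries the cleaner structural fact that $\de_{p,q}\leq\tfrac{1}{2}(d-1)(d-5)$ always holds for these curves. One small presentational gap: in the Pell step you assert ``a direct check shows'' without spelling out why the infinitely many solutions skipped over (e.g.\ $d=1,3,8$ for $\de_{p,q}=1$, or $d=1,4$ for $\de_{p,q}=3$) fail the admissibility conditions; this is true but deserves an explicit remark about the $q\geq 2$ and coprimality constraints. An interesting byproduct of your route is the observation that $\de_{p,q}\equiv 2,4\pmod 5$ can never occur --- not needed here, but a nice structural fact the paper's inequality argument does not reveal.
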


\begin{proof}
In order to prove minimality of $d$, it suffices to show that any algebraic curve of degree $d-1$ with a $(p,q)$ cusp is ruled out by the adjunction formula, i.e. that we have
\begin{align*}
\de^-_{d,p,q} := \tfrac{1}{2}(d-2)(d-3) - \tfrac{1}{2}(p-1)(q-1) < 0.
\end{align*}
In other words, if $d$ is not minimal then we have $\de^-_{d,p,q} \geq 0$, and hence $d-2 = \de_{d,p,q} - \de^-_{d,p,q} \leq \de_{d,p,q}$, i.e. $d \leq 2 + \de_{d,p,q}$. We then have
\begin{align*}
\de_{d,p,q} \leq \tfrac{1}{2}(d-1)(d-2) - \tfrac{1}{2}(3d-3) = \tfrac{1}{2}(d-1)(d-5) \leq \tfrac{1}{2}(\de_{d,p,q}+1)(\de_{d,p,q}-3),
\end{align*}
which implies $\de_{d,p,q} \geq 5$. Finally, the last claim in the corollary follows since we have an a priori upper bound on $p+q$ when \eqref{eq:degmin_as_expected} fails.
\end{proof}

\begin{rmk}
Theorem~\ref{thmlet:sesqui_plane_curves} also implies a new (a priori weaker) result on the existence of $(p,q)$-sesquicuspidal rational symplectic curves in $\CP^2$, i.e. working in the symplectic rather than algebraic category, and this already suffices to prove Theorem~\ref{thmlet:main_SEEP} (see \S\ref{sec:emb_obs}).
In particular, in the language of \cite{etnyre2020symplectic}, we get new genus zero projective symplectic hats (often of minimal degree) of the transverse torus knot $\TT(p,q)$ with maximal self-linking number, and by loc. cit. these are equivalent to positively immersed symplectic cobordisms from $\TT(p,q)$ to the standard transverse torus link $\TT(d,d)$ with $d = \tfrac{1}{3}(p+q)$.
It appears quite challenging to construct these cobordisms using more flexible topological techniques, for instance by manipulating braid diagrams (c.f. \cite{feller2021genus} or \cite[\S6.1]{chaidez2021lattice}). 
\end{rmk}

Let us also mention that the result in Theorem~\ref{thmlet:main_SEEP} is robust under certain perturbations of the round ball.
Consider the trapezoid in $\R^2_{\geq 0}$ with vertices $(0,0),(0,2),(2,1),(2,0)$, and let $\trapdom \subset \C^2$ denote its preimage under the moment map $\C^2 \ra \R_{\geq 0}^2, (z_1,z_2) \mapsto (\pi |z_1|^2,\pi |z_2|^2)$ for the standard Hamiltonian torus action on $\C^2$.

\begin{corlet}
Let $U \subset \C^2$ be any open subset
 such that $\trapdom \subset U \subset B^4(3) := E(3,3)$.
 Then we have
 $c_{U \times \R^{2N}}(a) = \tfrac{a}{a+1}$ for any $a \geq \tau^4$ and $N \geq 1$.
\end{corlet}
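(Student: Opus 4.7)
The strategy is a sandwich argument using monotonicity. From the inclusions $\trapdom \subset U \subset B^4(3)$ and the fact that $c_{\bullet \times \R^{2N}}(a)$ is order-reversing in the source domain, one has
\[
c_{B^4(3) \times \R^{2N}}(a) \leq c_{U \times \R^{2N}}(a) \leq c_{\trapdom \times \R^{2N}}(a),
\]
so it suffices to prove that both outer quantities equal $\tfrac{a}{a+1}$ whenever $a \geq \tau^4$.

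For the lower bound, I would combine Theorem~\ref{thmlet:main_SEEP} with the elementary scaling identity $c_{B^4(R) \times \R^{2N}}(a) = R^{-1} \cdot c_{B^4 \times \R^{2N}}(a)$, which follows from rescaling the source and target of any candidate embedding $E(\tfrac{1}{\mu},\tfrac{a}{\mu}) \times \R^{2N} \hooksymp B^4(R) \times \R^{2N}$ by a factor $R^{-1/2}$ in each $\C$-coordinate (using that $\R^{2N}$ is invariant under linear symplectic scalings of the whole target). Taking $R = 3$ and inserting $c_{B^4 \times \R^{2N}}(a) = \tfrac{3a}{a+1}$ from Theorem~\ref{thmlet:main_SEEP} yields the desired value $c_{B^4(3) \times \R^{2N}}(a) = \tfrac{a}{a+1}$.

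For the upper bound, the task is to construct, for every $\mu > \tfrac{a}{a+1}$, a symplectic embedding $E(\tfrac{1}{\mu},\tfrac{a}{\mu}) \times \R^{2N} \hooksymp \trapdom \times \R^{2N}$. The natural candidate is (a refinement of) Hind's symplectic folding construction from \cite{hind2015some}, which after the scaling above already produces an embedding into the larger target $B^4(3) \times \R^{2N}$ at the same value of $\mu$. The essential new point, and the main obstacle, is to verify that a suitably arranged version of Hind's folded ellipsoid has image actually contained in $\trapdom \times \R^{2N}$ rather than merely in $B^4(3) \times \R^{2N}$. Geometrically this is plausible: symplectic folding cuts the long ellipsoid along a straight hyperplane, translates the outer tip into the $\R^{2N}$ factor, and lays it back, so the $\C^2$-shadow is naturally a trapezoid whose diagonal edge traces out the fold; the specific vertices $(0,0), (0,2), (2,1), (2,0)$ of $\trapdom$, and in particular the diagonal slope $-\tfrac{1}{2}$, should be precisely those dictated by matching the fold parameters to the ellipsoid's aspect ratio. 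The argument thus amounts to a careful inspection of the construction in \cite{hind2015some} rather than a fundamentally new embedding, but the bookkeeping is the nontrivial step; if a direct inspection turns out to be awkward, one can alternatively appeal to the more explicit folding constructions of \cite{lalonde1995geometry,schlenk2003symplectic,Guth_polydisks} with the fold parameters arranged to match the dimensions of $\trapdom$.
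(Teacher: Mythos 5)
Your sandwich strategy, the monotonicity inequality $c_{B^4(3)\times\R^{2N}}(a)\le c_{U\times\R^{2N}}(a)\le c_{\trapdom\times\R^{2N}}(a)$ (which you in fact state in the correct direction; the paper's own proof has the two $\le$ signs accidentally flipped but otherwise argues identically), and the scaling argument reducing $c_{B^4(3)\times\R^{2N}}(a)$ to Theorem~\ref{thmlet:main_SEEP} are exactly the paper's approach for the lower bound.

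The one substantive gap is the upper bound, namely the existence of a symplectic embedding $E(\tfrac{1}{\mu},\tfrac{a}{\mu})\times\R^{2N}\hooksymp \trapdom\times\R^{2N}$ for every $\mu>\tfrac{a}{a+1}$. You correctly identify that this is the crux and that it should be a folding-type construction whose $\C^2$-shadow lands inside the trapezoid, but you do not actually establish it: you describe it as ``the nontrivial step'' requiring ``careful inspection'' of Hind's construction or its variants, which is a plan rather than a proof. The paper does not re-derive anything here; it simply invokes \cite[Prop.~3.1]{cristofaro2022higher}, where precisely this embedding into the stabilized trapezoidal toric domain is proved. So your reduction is right and your geometric intuition about what the fold must look like is plausible, but as written the key embedding lemma is left open, and the clean way to close it is to cite the known result rather than re-inspect \cite{hind2015some}.
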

\begin{proof}
  Theorem~\ref{thmlet:main_SEEP} together with monotonicity under symplectic embeddings and scaling considerations gives $c_{U \times \R^{2N}}(a) \leq c_{B^4(3) \times \R^{2N}}(a) = \tfrac{a}{a+1}$ for all $a \geq \tau^4$ and $N \geq 1$. On the other hand, by \cite[Prop. 3.1]{cristofaro2022higher} we have the folding-type symplectic embedding $E(\tfrac{1}{\mu},\tfrac{a}{\mu}) \times \R^{2N} \hooksymp \trapdom \times \R^{2N}$ for all $\mu > \tfrac{a}{a+1}$, which
gives $c_{U \times \R^{2N}}(a) \geq c_{\trapdom \times \R^{2N}}(a) \geq \tfrac{a}{a+1}$ for all $a \geq 1$ and $N \geq 1$.
\end{proof}

\sss

A simple byproduct of our proof of Theorem~\ref{thmlet:main_SEEP} is that we have $c_{B^4 \times \R^{2N}}(a) = c_{\CP^2 \times \R^{2N}}(a)$ for all $a \in \R_{\geq 1}$ and $N \in \Z_{\geq 1}$,\footnote{Here $\CP^2 := \CP^2(1)$ is equipped with the Fubini--Study form normalized so that a line has area $1$. It should generally be clear from the context whether we are viewing a given space as a complex algebraic surface or a symplectic four-manifold.} and it is natural to try to replace $\CP^2$ with other del Pezzo surfaces.
Recall that by definition these are smooth Fano complex projective surfaces, which up to diffeomorphism are $\CP^1 \times \CP^1$ and $\bl^j\CP^2 := \CP^2 \#^{\times j}\ovl{\CP}^2$ for $j = 0,\dots,8$. 
Up to symplectomorphism each of these admits a unique symplectic form which is \hl{unimonotone} (i.e. the first Chern class and symplectic area class coincide -- see e.g. \cite{salamon2013uniqueness}), and the complex structure is rigid when the degree is at least $5$ (i.e. $\CP^1 \times \CP^1$ and $\bl^j\CP^2$ for $j=0,\dots,4$).
For example, the Fubini--Study symplectic form on the complex projective plane becomes unimonotone after rescaling so that a line has symplectic area $3$ (this is sometimes denoted by $\CP^2(3)$).

It was recently observed in \cite{cristofaro2020infinite,casals2022full} that, for each unimonotone rigid\footnote{By slight abuse, rigidity here refers to the complex structure, even though at the moment we are viewing $X$ as a symplectic manifold.} del Pezzo surface $X$, the corresponding four-dimensional ellipsoid embedding function $c_X(a)$ is an infinite staircase in the regime $1 \leq a \leq a_\acc^X$, analogous to the one appearing in Figure~\ref{fig:stab_fib}, with numerics given by solutions to a recursive equation $g_{k+2J} = K g_{k+J} - g_k$. Here $K+2$ is the degree of the del Pezzo surface $X$, $J$ is called the number of 
\hl{strands} of the staircase, and $a_\acc^X = \tfrac{1}{2}(K + \sqrt{K^2-4})$ is the \hl{accumulation point}
 (see e.g. \cite[\S2.4]{cusps_and_ellipsoids} for a more detailed overview). 
For example, for $X = \CP^2$ we have $K=7$, $J=2$, and $a_\acc^{\CP^2} = \tau^4$. Note that $J$ is not directly visible from Figure~\ref{fig:stab_fib}, but it is the number of initial ``seeds'' needed to generate all of the steps via the above recursion, 
with $J=2$ for $\CP^1,\CP^1\times \CP^1,\bl^3\CP^2,\bl^4\CP^2$
and $J=3$ for $\bl^3\CP^2,\bl^4\CP^2$. 
According to \cite[Cor. C]{mcduff2024singular}, these infinite staircases are stable, i.e. for each unimonotone rigid del Pezzo surface we have $c_X(a) = c_{X \times \R^{2N}}(a)$ for all $a \in [1,a_\acc^X]$ and any $N \in \Z_{\geq 1}$.
The following theorem addresses what happens beyond the accumulation point.

\begin{thmlet}\label{thmlet:SEEP_dPs} 
Fix $N \in \Z_{\geq 1}$.
\begin{enumerate}[label=(\alph*)]
\item\label{item:SEEP_rigid} If $X$ is one of the unimonotone rigid del Pezzo surfaces $\bl^j\CP^2$ 
for $j \in \{0,1,2,3\}$ or $\CP^1 \times \CP^1$, we have
\begin{align}\label{eq:rigid_dP_SEEP}
c_{X \times \R^{2N}}(a) = 
\begin{cases}
  c_X(a) &  \text{if}\;\; a \in [1,a_\acc^X]\\
  \tfrac{a}{a+1} & \text{if}\;\;   a \in [a_\acc^{X},\infty).
\end{cases}
\end{align}

\item\label{item:SEEP_nonrigid} For the unimonotone del Pezzo surfaces $\bl^j\CP^2$
for $j \in \{5,6,7,8\}$, we have
\begin{align}\label{eq:higher_dP_SEEP}
c_{X \times \R^{2N}}(a) \geq \tfrac{a}{a+1} \;\;\;\;\;\text{for all} \;\;\;\;\; a \in [1,\infty).
\end{align}
\end{enumerate}
\end{thmlet}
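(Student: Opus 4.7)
The plan is to mirror the strategy used for Theorem~\ref{thmlet:main_SEEP}, replacing the target $\CP^2$ by the del Pezzo surface $X$. Following \cite{cusps_and_ellipsoids}, we reduce each desired lower bound on $c_{X \times \R^{2N}}$ to the existence of a $(p,q)$-sesquicuspidal rational curve in $X$ of index zero satisfying a well-placed condition with respect to an anticanonical nodal cubic $\calN_X \subset X$. Concretely, such a curve in class $A \in H_2(X)$ with $\lambda := [\omega] \cdot A$ yields the obstruction $c_{X \times \R^{2N}}(p/q) \geq p/\lambda$; under a unimonotone normalization the index-zero condition $c_1(X) \cdot A = p+q$ forces $\lambda = p+q$, translating the bound into $p/(p+q) = a/(a+1)$ at $a = p/q$.

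For part \ref{item:SEEP_rigid}, the range $a \in [1, a_\acc^X]$ will be handled by invoking the staircase stability result \cite[Cor. C]{mcduff2024singular}, so the equality with $c_X(a)$ is automatic. For $a \in [a_\acc^X, \infty)$ the upper bound $c_{X \times \R^{2N}}(a) \leq a/(a+1)$ will follow by monotonicity from an explicit symplectic embedding of a rescaled moment trapezoid $\trapdom$ into $X$ combined with the folding embedding of \cite[Prop. 3.1]{cristofaro2022higher}, exactly as in the corollary following Theorem~\ref{thmlet:main_SEEP}; each of the listed rigid del Pezzo surfaces is toric with a moment polygon containing such a trapezoid after suitable rescaling.

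The main work is the matching lower bound, which I plan to obtain by transferring the sesquicuspidal plane curves of Theorem~\ref{thmlet:sesqui_plane_curves}\ref{item:>tau_4} to $X$ via birational modifications. Blowing up $\CP^2$ at $j$ smooth points of the nodal cubic $\calN_0$ produces $\bl^j\CP^2$ together with an anticanonical uninodal cubic $\calN_X$ (the proper transform of $\calN_0$), and the proper transform of a curve $(p,q)$-well-placed with respect to $\calN_0$ and disjoint from the blowup points will remain $(p,q)$-well-placed with respect to $\calN_X$, with the same degree $d$ and cohomology class $dH$, hence of index zero and unimonotone area $3d = p+q$. The case $X = \CP^1 \times \CP^1$ will be reduced to $\bl^1\CP^2$ via the standard birational equivalence. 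For part \ref{item:SEEP_nonrigid} with $j \in \{5,6,7,8\}$, the larger $H^2(X)$ offers additional exceptional classes that can absorb extra nodes, so the same blowup recipe applied to the full family of Theorem~\ref{thmlet:sesqui_plane_curves} will yield the lower bound $a/(a+1)$ for all $a \geq 1$ (the absence of an infinite staircase meaning there is no low-$a$ region to exclude).

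The hardest part will be showing that the blowup-transfer is compatible with the cohomological constraints forced by unimonotonicity, and more fundamentally establishing the pseudoholomorphic (rather than algebraic) existence statement needed for the obstruction machinery of \cite{cusps_and_ellipsoids} with respect to generic almost complex structures on $X$. Both issues ultimately amount to extending the scattering-diagram-to-curve bijection of \S\ref{sec:from_wp_to_sd_and_back} from $\CP^2$ to each del Pezzo surface $X$ equipped with its own anticanonical nodal cubic $\calN_X$.
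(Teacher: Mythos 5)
Your overall reduction matches the paper's: use Corollary~\ref{cor:stab_obs_from_curve} to turn index-zero $(p,q)$-sesquicuspidal rational curves in the unimonotone surface $X$ into the bound $c_{X\times\R^{2N}}(p/q)\geq (p/q)/((p/q)+1)$, use staircase stability for $a\leq a_\acc^X$, and use the explicit folding embedding of \cite[Prop.~3.1]{cristofaro2022higher} for the upper bound beyond $a_\acc^X$. The paper does exactly this, simply citing Theorem~\ref{thmlet:curves_in_X} as a black box for the required curves in $X$.

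However, your plan for actually \emph{producing} those curves in $X$ has a genuine gap. Blowup transfer from $\CP^2$ preserves the pair $(p,q)$ (the strict transform of a $(p,q)$-well-placed curve disjoint from the blowup centers remains $(p,q)$-well-placed), so Theorem~\ref{thmlet:sesqui_plane_curves}\ref{item:>tau_4} plus \eqref{eq:blowup_subset} only gives curves in $\bl^j\CP^2$ for $p/q > \tau^4 \approx 6.85$. But $a_\acc^X$ \emph{decreases} as $j$ increases ($a_\acc^{\bl^1\CP^2}\approx 5.83$, $a_\acc^{\bl^3\CP^2}=5$, etc.), so for part \ref{item:SEEP_rigid} you are left with an uncovered window $(a_\acc^X,\tau^4)$, and for part \ref{item:SEEP_nonrigid} you never reach $[1,\tau^4)$ at all. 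Having extra exceptional classes available does not help: the nodes of the transferred curve already live away from the blowup centers and the ratio $p/q$ does not change. The paper closes this gap in \S\ref{sec:symmetries} by applying the Geiser-type birational symmetries $\Phi_X,\Psi_X$ (numerically, $\Ssym_X(x)=K-1/x$ and the involution $\Rsym_X$) to propagate curve existence from $[\tau^4,\infty)$ downward, interleaving symmetry steps with further blowups to reach $[a_\acc^{\bl^j\CP^2},\infty)$ for $j\leq 4$ and all of $[1,\infty)$ for $j\geq 5$. You make no mention of these symmetries, and without them the lower bounds you need are simply not obtained on the required ranges.

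The claim that $X=\CP^1\times\CP^1$ ``will be reduced to $\bl^1\CP^2$ via the standard birational equivalence'' is also incorrect as stated: $\CP^1\times\CP^1$ is not a blowup of $\CP^2$ (rather, $\bl^2\CP^2\cong\bl^1(\CP^1\times\CP^1)$), and the inclusion \eqref{eq:blowup_subset} only transports curves in the blowup direction, not under blowdown. The paper treats $\CP^1\times\CP^1$ as an exceptional case, deferring it to \S\ref{sec:std_sd}, where Corollary~\ref{cor:uninodal_rays} and Lemma~\ref{lem:corners_match_sd}, together with the Gr\"afnitz--Luo positivity theorem for $\ks(\calD_{e_1,e_2}^{2,4})_\min$, supply the needed curves directly. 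Your closing sentence correctly intuits that extending the scattering-diagram-to-curve bijection to $(X,\calN_X)$ is what is ultimately required, but this is the main content of \S\ref{sec:fund_bij}--\S\ref{sec:std_sd}, not an auxiliary compatibility check, and it is not circumvented by the blowup trick for $\CP^1\times\CP^1$.
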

\begin{rmk}\label{rmk:SEEP_for_4_pt_bu}
For the missing unimonotone del Pezzo surface $X = \bl^4\CP^2$, 
 $c_{X \times \R^{2N}}(a)$ is still bounded from below by the right hand side of \eqref{eq:rigid_dP_SEEP} for all $a \in [1,\infty)$, and  equal to it for $a \in [1,a_\acc^X]$.
Proving the matching upper bound $c_{X \times \R^{2N}}(a) \leq \tfrac{a}{a+1}$ for $X = \bl^j\CP^2$
with $j \geq 4$ appears to require a refinement of the explicit folding-type symplectic embedding in \cite[Prop. 3.1]{cristofaro2022higher} (see also Remark~\ref{rmk:eliashberg_princ}).
\end{rmk}

Just as in the case of Theorem~\ref{thmlet:main_SEEP}, the proof of Theorem~\ref{thmlet:SEEP_dPs}
is based on  the following result about the  existence of enough singular rational algebraic curves, this time  in general del Pezzo surfaces. We already know from
 \cite[Thm. B]{mcduff2024singular} that for
  each outer corner of the infinite staircase $c_X|_{[1,a_\acc^X]}$, say with $x$-value $p/q$, there exists a $(p,q)$-unicuspidal rational algebraic curve $C$ in $X$ with $p+q = c_1([C])$. For brevity we will sometimes refer to these as the \hl{outer corner curves} of $X$.

\begin{thmlet}\label{thmlet:curves_in_X} Let $X$ be a del Pezzo surface, and let 
$\Sset_X$ denote the set of reduced fractions $p/q \geq 1$ such that there exists a rational algebraic curve $C$ in $X$ with a $(p,q)$ cusp satisfying $p+q = c_1([C])$.

\begin{enumerate}[label=(\alph*)]

  \item\label{item:curves_in_X_rigid} If $X$ is rigid (i.e. has degree $\geq 5$), then $\Sset_X$ is dense in $[a_\acc^X,\infty)$. Meanwhile, $\Sset_X\cap [1,a_\acc^X)$ is a discrete set accumulating at $a_\acc^X$, consisting precisely of the $x$-values of outer corners of the infinite staircase $c_X|_{[1,a_\acc^X]}$.

  \item\label{item:curves_in_X_nonrigid} If $X$ is nonrigid (i.e. has degree $\leq 4$), then $\Sset_X$ is dense in $[1,\infty)$.
\end{enumerate}
Moreover, the relevant curves can be taken to be well-placed with respect to any given uninodal anticanonical divisor.
\end{thmlet}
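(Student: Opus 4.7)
The plan is to deduce Theorem~\ref{thmlet:curves_in_X} from two main ingredients: Theorem~\ref{thmlet:sesqui_plane_curves} for the ambient case $X = \CP^2$, and the outer corner curve constructions of \cite[Thm.~B]{mcduff2024singular} for the staircase values in rigid del Pezzos. The bridge between $\CP^2$ and a general del Pezzo $X$ is the scattering-diagram and log Calabi-Yau framework underlying the whole paper: given an uninodal anticanonical $\calN \subset X$, the pair $(X, \calN)$ is log Calabi-Yau, and a $(p,q)$-well-placed rational curve $C \subset X$ with $p+q = c_1([C])$ corresponds to a relative genus-zero log curve in a nontoric blowup of a toric model associated to $(X,\calN)$. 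The enumeration of such log curves is controlled by a scattering diagram for $(X, \calN)$, and the positivity results for scattering diagrams (technique (iii) of the abstract) yield nonemptiness of the relevant moduli spaces on a dense set of rays.

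For part \ref{item:curves_in_X_rigid}, the inclusion of the outer corner $x$-values into $\Sset_X \cap [1, a_\acc^X)$ is furnished directly by \cite[Thm.~B]{mcduff2024singular}. For the reverse inclusion, any $C \in \Sset_X$ with $p/q < a_\acc^X$ produces a symplectic embedding obstruction (see \S\ref{sec:emb_obs}) which, matched against the known piecewise-linear staircase graph $c_X|_{[1, a_\acc^X]}$, forces $p/q$ to be one of the outer corner values; these are discrete and accumulate at $a_\acc^X$. Density of $\Sset_X$ in $[a_\acc^X, \infty)$ then reduces via the scattering-diagram correspondence to the del Pezzo analogue of case \ref{item:>tau_4} of Theorem~\ref{thmlet:sesqui_plane_curves}, where the set of admissible coprime $(p,q)$ with $p+q = c_1([C])$ and $p/q$ in the interval is manifestly dense.

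For part \ref{item:curves_in_X_nonrigid}, the scattering diagram associated to a nonrigid $(X, \calN)$ has nonzero entries on a cone covering the full ray $\{a \geq 1\}$, rather than being confined above an accumulation ray. As a consequence, the set of ratios $p/q$ realized by $(p,q)$-well-placed rational curves in $X$ with $p+q = c_1([C])$ is dense in $[1, \infty)$. In both parts, well-placedness with respect to any given uninodal anticanonical $\calN$ is automatic from the log-relative origin of the curves, combined with the fact that any two uninodal anticanonicals in a fixed del Pezzo are equivalent under a suitable symmetry group (classical projective equivalence for rigid $X$, augmented by the symmetries $\Phi_X, \Psi_X$ of \S\ref{sec:symmetries} in the nonrigid case).

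The main obstacle is the positivity step: one must establish nonvanishing of the scattering-diagram entries on a dense subset of the claimed cone. For $X = \CP^2$ this is the technical heart of the proof of Theorem~\ref{thmlet:sesqui_plane_curves}; for general del Pezzos it requires extending this analysis to scattering diagrams with more (and less symmetric) initial data, with the delicate cases being $\bl^4\CP^2$, which sits on the boundary between the rigid staircase regime and the nonrigid flexible regime, and $\CP^1 \times \CP^1$, whose scattering data has two generating rays rather than one. A secondary technical point is ensuring that the curves produced are genuinely algebraic with irreducible image and the exact prescribed $(p,q)$ cusp, rather than nodal degenerations thereof; the index-zero condition $p+q = c_1([C])$ provides the rigidity needed for this.
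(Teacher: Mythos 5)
Your overall architecture — outer corners from \cite[Thm.~B]{mcduff2024singular}, the embedding-obstruction argument to show no other values occur below $a_\acc^X$, and scattering positivity for density — matches the paper in places but diverges in a way that opens a real gap. You propose to obtain density of $\Sset_X$ in $[a_\acc^X,\infty)$ (and in $[1,\infty)$ for nonrigid $X$) by applying the scattering correspondence of Theorem~\ref{thm:wp_and_sd} \emph{directly} to the toric model of each $(X,\calN)$. But the only scattering positivity results available, Theorem~\ref{thm:gp_nonzero} (Gross--Pandharipande/Reineke) and Theorem~\ref{thm:GL_nonzero} (Gr\"afnitz--Luo), cover exactly the two-incoming-ray diagrams $\calD_{e_1,e_2}^{\ell_1,\ell_2}$, i.e.\ the cases $J=2$. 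For $\bl^1\CP^2$ and $\bl^2\CP^2$ the toric models in Table~\ref{table:fund_bij} have $J=3$, and for the nonrigid del Pezzos ($j\geq 5$) any toric model has even more initial rays — positivity on a dense set of rays is not established for these, and the paper explicitly flags it as open (Remark~\ref{rmk:precise_density}). You also misidentify the difficult case: $\bl^4\CP^2$ has $J=2$ and is actually covered by the existing positivity results; $\bl^1\CP^2,\bl^2\CP^2$ (and all nonrigid $X$) are where your route stalls.

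The paper avoids this obstacle by going through $\CP^2$ and a blowup plus symmetry argument (Section~\ref{sec:symmetries}): the inclusion $\Sset_X^\calN\subset\Sset_{\wt X}^{\wt\calN}$ under edge blowups propagates the $\CP^2$ density into $\bl^j\CP^2$ for every $j$, and the birational transformations $\Phi_X,\Psi_X$ (equivalently, the numerical maps $\Ssym_X,\Rsym_X$) push the density interval down from $[\tau^4,\infty)$ all the way to $[1,\infty)$ by the time $j=5$; the remaining cases $j\geq 6$ then come for free. Only $\CP^1\times\CP^1$, which is not a blowup of $\CP^2$, is handled by scattering positivity directly (via $(\ell_1,\ell_2)=(2,4)$). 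Your reverse inclusion argument for part (a) — matching a $(p,q)$-sesquicuspidal curve against $c_X$ — is the same as the paper's, and your closing remark about algebraicity/irreducibility is addressed in the paper by Lemma~\ref{lem:P_to_P_prime} and Lemma~\ref{lem:no_pos_dim_fams}, so that part of your outline is sound. The density step, however, needs the blowup/symmetry detour; as written, the proposal invokes unproven scattering positivity for $J\geq 3$.
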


\begin{rmk}\label{rmk:jgeq9case} \hfill
\begin{enumerate}
  \item 
It also follows from our proof of Theorem~\ref{thmlet:curves_in_X} that $\Sset_X$ is dense in $[1,\infty)$ if $X$ is any blowup of $\CP^2$ at $j \geq 9$ points in suitably general position, or if all of the points lie in the smooth locus of a given uninodal cubic (see Remark~\ref{rmk:gen_rat_surf_II}). 
We expect that such curves in non-Fano rational surfaces will play an important role in the general case of Problem~\ref{prob:SEEP}.

\item Our proof actually gives a precise description of $\Sset_X$ in many cases (c.f. Remark~\ref{rmk:precise_density}). 
For instance, in the case $X = \CP^1 \times \CP^1$, a reduced fraction $p/q > a_\acc^X$ lies in $\Sset_X$ if and only if $p+q$ is divisible by $2$, while for $X = \bl^j\CP^2$ with $j \in \{3,4\}$ we have simply $\Sset_X \cap [a_\acc^X,\infty) = \Q \cap [a_\acc^X,\infty)$. 
\end{enumerate}
\end{rmk}

\sss

A key idea underlying Theorems ~\ref{thmlet:sesqui_plane_curves} and \ref{thmlet:curves_in_X} is an explicit connection between well-placed curves and scattering diagrams. 
As we recall in \S\ref{subsec:scattering_review}, a \hl{scattering diagram} $\calD$ is an algebro-combinatorial object which consists roughly of a collection of oriented rays in $\R^2$, each labeled by a power series in $\C[x,x^{-1},y,y^{-1}]\ll t \rr$.
Scattering diagrams were defined implicitly by Kontsevich--Soibelman \cite{Kontsevich2006} as a bookkeeping tool for Maslov index zero holomorphic disks in singular Lagrangian toric fibrations, and were subsequently studied extensively (also in higher dimensions) as part of the Gross--Siebert approach to mirror symmetry (see e.g. \cite{gross2011real,gross2015mirror,GHS_theta,gross2022canonical}). They also arise naturally in various other contexts, for instance cluster algebras \cite{GHKK2018,gross2010quivers,cheung2017greedy,scattering_fans}, and quiver representations \cite{reineke2010poisson,reineke2011cohomology,bridgeland2017,reineke2013refined}.
A scattering diagram $\calD$ carries a natural notion of monodromy around closed loops in $\R^2$, and the Kontsevich--Soibelman algorithm produces a new scattering diagram $\ks(\calD)_\min$ by adding labeled rays (typically infinitely many) in order to kill the monodromy. 
This process introduces rich combinatorics which are not fully understood even when the initial scattering diagram $\calD$ is very simple.

A natural setting for our correspondence result is that of \hl{uninodal Looijenga pairs} $(X,\calN)$, i.e. $X$ is a complex projective surface and $\calN \subset X$ is a rational anticanonical divisor with one double point.
Following \cite{gross2015mirror}, any such pair admits at least one \hl{toric model}, which in particular identifies the ``interior'' $X \setminus \calN$ with the interior of a nontoric blowup of a toric surface (see \S\ref{subsec:tor_mod_prelim} for details).
By way of notation, put $z^{\mm} := x^ay^b \in \C[x,x^{-1},y,y^{-1}]$ for each $\mm = (a,b) \in \Z^2$. 
\begin{thmlet}[Theorem~\ref{thm:wp_and_sd} and \S\ref{subsec:rigid_dP_tor_mods}]\label{thmlet:wp_and_sd}
 For each uninodal Looijenga pair $(X,\calN)$ with strongly convex\footnote{Here strong convexity is a technical condition for toric models which holds in all of our main examples -- see \S\ref{subsec:wp_and_GW}.} toric model $\tormod$, there is a scattering diagram $\calD_{\tormod}$ and a piecewise linear map $\wp: \Z^2_{\geq 1} \ra \Z^2$ such that, for each coprime $p,q \in \Z_{\geq 1}$, the following are equivalent:
\begin{enumerate}[label=(\alph*)]
  \item there exists a rational algebraic curve in $X$ which is $(p,q)$-well-placed with respect to $\calN$
  \item the coefficient of $z^{\wp(p,q)}$ in $\ff_{\wp(p,q)}$ is nonzero as an element of $\C\ll t \rr$, where $\ff_{\wp(p,q)} \in \C[x,x^{-1},y,y^{-1}]\ll t \rr$ is the label attached to the ray $\R_{\geq 0} \cdot \wp(p,q)$ in $\ks(\calD_\tormod)_\min$.
\end{enumerate}

Moreover, in the case that $X$ is a rigid del Pezzo surface, there is a strongly convex toric model $\tormod_X$ such that $\calD_{\tormod_X}$ has rays $\R_{\leq 0} \cdot \mm_i$ labeled by functions $\ff_i = (1+tz^{\mm_i})^{\ell_i}$ for $i=1,\dots,J$ as in Table~\ref{table:fund_bij}.
\end{thmlet}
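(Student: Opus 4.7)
The plan is to reduce the existence of $(p,q)$-well-placed curves in $X$ to enumerative counts of rational curves in a nontoric blowup of a toric surface with prescribed boundary tangencies, then convert those counts into scattering diagram coefficients via the Gross--Pandharipande--Siebert correspondence, and finally use recent positivity results for scattering diagrams to upgrade nonvanishing of a virtual count to nonemptiness of the relevant moduli space.

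First, recall that a toric model presents the interior $X \setminus \calN$ as the interior of a nontoric blowup $Y \to \ovl{Y}$ of a toric surface $\ovl{Y}$ with toric boundary $\ovl{D}$, where the blowup centers are smooth points of $\ovl{D}$ and the node $\db \in \calN$ corresponds to a corner of the moment polytope of $\ovl{Y}$. A $(p,q)$-well-placed curve $C \subset X$ transports under this model to a proper rational curve $\wt{C} \subset Y$ meeting the proper transform of $\ovl{D}$ only at a single point, with a tangency profile on one distinguished toric divisor determined by $(p,q)$ together with the model data. I would define the piecewise linear map $\wp: \Z^2_{\geq 1} \to \Z^2$ to record the primitive outward tangency direction of $\wt{C}$ in the cocharacter lattice $\Z^2$; its piecewise linearity reflects the sequence of toric mutations composing the birational modification.

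Next, I would set $\calD_\tormod$ to be the scattering diagram with one incoming ray $\R_{\leq 0}\cdot \mm_i$ labeled by $(1 + t z^{\mm_i})^{\ell_i}$ for each primitive direction $\mm_i$ along which $\ell_i$ nontoric blowup points of $\ovl{Y}$ occur. By the Gross--Pandharipande--Siebert theorem (and its refinements for log Calabi--Yau surfaces), the coefficient of $z^{\wp(p,q)}$ in the $\ks(\calD_\tormod)_\min$-label of the outgoing ray $\R_{\geq 0}\cdot\wp(p,q)$ equals, after the standard logarithmic/BPS rescaling, a virtual count of rational curves in $Y$ with the tangency profile produced in the previous step. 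The bijection of that step then yields the equivalence of (a) and (b) at the level of virtual counts, and the strongly convex hypothesis ensures this virtual count is enumerative.

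The main technical obstacle is passing from a nonvanishing virtual count to the existence of an honest algebraic curve, since a priori distinct geometric strata could cancel in the relative Gromov--Witten invariant. This is where the recent scattering positivity results are essential: they ensure that every label of $\ks(\calD_\tormod)_\min$ factors as an infinite product $\prod (1 + t^a z^{\mm})^c$ with $c \in \Z_{\geq 0}$, so contributions from different curve classes cannot cancel. For the second part of the theorem the strategy is concrete: for each unimonotone rigid del Pezzo surface $X$ I would exhibit in \S\ref{subsec:rigid_dP_tor_mods} an explicit strongly convex toric model $\tormod_X$ --- for example, for $\CP^2$ three nontoric blowups on three concurrent toric lines of an auxiliary toric surface, with analogous configurations for $\CP^1 \times \CP^1$ and the other rigid del Pezzos --- and read the initial scattering data $(\mm_i,\ell_i)$ directly off the blowup configuration to verify Table~\ref{table:fund_bij}.
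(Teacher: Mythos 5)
Your overall architecture --- a fundamental bijection transporting $(p,q)$-well-placed curves in $X$ to boundary-tangent rational curves in a nontoric blowup $Y$ of a toric surface, the piecewise linear map $\wp$ reading off the tangency direction, the tropical vertex theorem of Gross--Pandharipande--Siebert relating scattering coefficients to relative Gromov--Witten invariants $N_\mmvec[\bPvec]$ --- matches the paper's strategy. What does not work is your mechanism for converting the virtual count into an honest existence statement.

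You invoke the Gross--Hacking--Keel--Kontsevich positivity result (each label factors as $\prod_\ka(1+t^{a\ka}z^{\ka\mm})^{c_\ka}$ with $c_\ka\geq 0$) to claim that ``contributions from different curve classes cannot cancel.'' That theorem is about the structure of the consistent scattering diagram; it gives no information about whether the log GW invariant $N_\mmvec[\bPvec]=\int_{[\ovl{\calM}]^{\op{vir}}}1$ is enumerative, i.e.\ whether a nonempty moduli space could still have vanishing virtual count because of cancellation among (possibly reducible or multiply covered) stable maps, or conversely whether a nonzero virtual count is carried by a genuine irreducible curve. In particular GHKK positivity holds without any strong convexity hypothesis, yet the implication (a)~$\Rightarrow$~(b) in the theorem genuinely requires strong convexity --- so if your mechanism worked, the hypothesis would be superfluous. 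The paper instead proves enumerativity geometrically by two ingredients you have not identified. First, Lemma~\ref{lem:P_to_P_prime}: under strong convexity, any extra component $\uvl{C}_1$ of a relative stable map in $\ovl{\calM}_{\be_\bPvec}(Y^o_\mmvec[\bPvec]/\Ddiv_\out^o)$ has $\Ddiv_\out\cdot\uvl{C}_1=0$, and a rational-equivalence identity in the toric surface then forces $\sum_i(\Ddiv_i\cdot\uvl{C}_1)\,\mm_i=0$ with nonnegative coefficients, whence strong convexity kills $\uvl{C}_1$; so the compactified moduli space equals $\calM_{\be_\bPvec}(Y_\mmvec[\bPvec]/\Ddiv_\out)$ on the nose. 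Second, Lemma~\ref{lem:no_pos_dim_fams}: a holomorphic two-form on $X\setminus\calN$ with simple poles along $\calN$ rules out positive-dimensional families of rational curves meeting $\calN$ at a single point (the K3-is-not-uniruled argument). Together these make the moduli space a finite set of isolated points each counted by a positive integer (Corollary~\ref{cor:curves_count_pos}), which gives both directions of the equivalence. Note also that the easy direction (b)~$\Rightarrow$~(a) does not need strong convexity at all --- nonvanishing scattering coefficient gives nonvanishing $N_\mmvec[\bPvec]$ via GPS, hence a nonempty $\ovl{\calM}$, and the first part of Lemma~\ref{lem:P_to_P_prime} already extracts an irreducible component meeting $\Ddiv_\out$ once; your write-up collapses the two directions.

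A small additional discrepancy: for $\CP^2$ you propose ``three nontoric blowups on three concurrent toric lines,'' which would be a $J=3$ toric model with $V^\tor=\CP^2$; the model the paper actually uses for $(\CP^2,\calN_0)$ has $V^\tor=F_3$, $J=2$, and $\ell_1=\ell_2=1$, i.e.\ only two nontoric blowups. The $J=2$ choice matters because it lets one reduce, via the change-of-lattice trick, to the well-studied basic diagrams $\calD_{e_1,e_2}^{\ell_1,\ell_2}$.
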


As we point out in Remark~\ref{rmk:atfs_and_tms}, toric models for uninodal Looijenga pairs are closely related to four-dimensional symplectic almost toric fibrations (as in e.g. \cite{symington71four}).
In particular, for those unimonotone rigid del Pezzo surfaces $X$ admitting triangular almost toric fibrations (namely 
$\bl^j\CP^2$
for $j=0,3,4$ and $\CP^1 \times \CP^1$, corresponding to $J=2$ strands), the initial scattering diagram $\calD_{\tormod_X}$ has just two rays, and hence can be converted (by a change of lattice trick as in \cite[\S C.3]{GHKK2018}) to one of the basic scattering diagrams $\calD_{e_1,e_2}^{\ell_1,\ell_2}$ studied in \cite{gross2010quivers} (see \S\ref{sec:std_sd}). For instance, in the case $X=\CP^2$, Theorem~\ref{thmlet:wp_and_sd} reduces the existence of well-placed rational plane curves to understanding
the scattering diagram $\ks(\calD_{e_1,e_2}^{3,3})_\min$. 
These basic scattering diagrams were first studied using computer experiments by Kontsevich, and were discussed empirically in e.g. \cite[Ex. 1.6]{gross2010tropical} and \cite[Ex. 1.15]{GHKK2018}. 
In the case $\ell_1 = \ell_2$, Gross--Pandharipande \cite{gross2010quivers} exploited a surprising connection with quiver representation theory due to Reineke \cite{reineke2010poisson} in order to completely describe all rays appearing in $\ks(\calD_{e_1,e_2}^{\ell_1,\ell_1})_\min$. Together with Theorem~\ref{thmlet:wp_and_sd}, this suffices to prove Theorem~\ref{thmlet:sesqui_plane_curves}, and, combined with additional symmetry considerations and a blowup trick in \S\ref{sec:symmetries}, also Theorem~\ref{thmlet:curves_in_X} except for the case $X = \CP^1 \times \CP^1$.
More recently, Gross--Hacking--Keel--Kontsevich \cite{GHKK2018} proved a powerful positive factorization result for scattering diagrams, and Gr\"afnitz--Luo \cite{grafnitz2023scattering} combined this with the deformation techniques from \cite{gross2010tropical} and the symmetries $T_1,T_2$ from \cite{gross2010quivers} in order to combinatorially analyze the scattering diagrams $\ks(\calD_{e_1,e_2}^{\ell_1,\ell_1})_\min$ for all $\ell_1,\ell_2 \in \Z_{\geq 1}$. 
In particular, the resulting scattering positivity results are strong enough to handle the remaining case of $\CP^1 \times \CP^1$, which corresponds to $(\ell_1,\ell_2) = (2,4)$.

\begin{rmk}
It is remarkable that the shape of the stabilized ellipsoid embedding function $c_{B^4 \times \R^{2N}}(a)$ depicted in Figure~\ref{fig:stab_fib} precisely mirrors the rays in the scattering diagram $\ks(\calD_{e_1,e_2}^{3,3})_\min$, especicially since the former is characterized in the regime $[\tau^4,\infty)$ by Hind's symplectic folding construction \cite{hind2015some}, which appears very flexible and nonalgebraic in nature.
\end{rmk}

\begin{rmk}
In this paper the flow of implications go mostly from scattering diagrams to quantitative symplectic geoemtry (with singular algebraic curves as an intermediary), but we also expect that this conceptual framework will be useful in proving new nontrivial resuls about scattering diagrams -- see e.g. Remark~\ref{rmk:refined_SD}. 
\end{rmk}

\section*{Organization of the paper}
We explain in \S\ref{sec:emb_obs} how to deduce Theorem E (and hence also Theorem A) from Theorem F.  In \S\ref{sec:symmetries} we use symmetries to show that Theorem B  implies all cases of  Theorem F except for $X: = \C P^1\times \C P^1$.  \S\ref{sec:fund_bij} discusses properties of Looijenga pairs and constructs explicit 
toric models for each rigid $X$, and establishes in Proposition~\ref{prop:fund_bij} a bijection between $(p,q)$-well-placed rational curves in $X$ and algebraic curves in the corresponding toric model.
\S\ref{sec:from_wp_to_sd_and_back} explains how one can use scattering diagrams to prove the existence of well-placed algebraic curves, and finally \S\ref{sec:std_sd} explains how known results about scattering diagrams can be used to establish the existence of the curves needed to prove Theorem B, and also Theorem F in the case $X: = \C P^1\times \C P^1$.

\section*{Acknowledgements} K.S. would like to thank Mark Gross and Bernd Siebert for explaining the change of lattice trick from \cite[\S C.3]{GHKK2018}, and also for pointing out several other relevant references related to scattering diagrams. We also thank J\'anos Koll\'ar for various interesting comments and questions.

\section{Obstructing symplectic embeddings via algebraic curves}\label{sec:emb_obs}

In this section we briefly explain how to deduce Theorem~\ref{thmlet:SEEP_dPs} (and hence also Theorem~\ref{thmlet:main_SEEP} as a special case) from Theorem~\ref{thmlet:curves_in_X}.
Theorem~\ref{thm:stab_obs_from_curve} below also implies various refinements of Theorem~\ref{thmlet:SEEP_dPs}, wherein the domain $E(a,b) \times \R^{2N}$  is replaced with a compact symplectic manifold; we leave the explicit formulations to the interested reader. 

An algebraic curve $C$ in a smooth complex projective surface is \hl{weakly $(p,q)$-sesquicuspidal} (for some coprime $p,q \in \Z_{\geq 1}$) if it has a $(p,q)$ cusp singularity, and \hl{$(p,q)$-sesquicuspidal} if all of its auxiliary singularities are ordinary double points. We define the (complex) \hl{index} of such a curve to be $\ind_\C^{p,q}(C) := c_1([C]) - p - q$. 
Symplectic $(p,q)$-sesquicuspidal curves are defined similarly but require $C$ to be only a symplectic submanifold away from the singular points (note that the auxiliary double points are required to be positively oriented).
In the symplectic category we can always perturb the auxiliary singularities into finitely many ordinary double points, although this is not guaranteed in the algebraic category.

The following is our main tool for deducing symplectic embedding obstructions from singular curves.

\begin{thm}[{Cor. 2.7.2, Cor. 2.3.8, Thm. D, and Thm. E in \cite{cusps_and_ellipsoids}}]\label{thm:stab_obs_from_curve}
Let $X$ be a closed symplectic four-manifold, and suppose that $X$ contains an index zero\footnote
{
See Remark~\ref{rmk:pq}.} 
 $(p,q)$-sesquicuspidal rational symplectic curve $C$ in homology class $A \in H_2(X)$ for some coprime $p,q \in \Z_{\geq 1}$.
Then any symplectic embedding $E(\tfrac{1}{\mu} \cdot (p,q,b_1,\dots,b_N)) \hooksymp X \times Q^{2N}$ with $Q^{2N}$ a closed symplectic manifold of dimension $2N \geq 0$ and $b_1,\dots,b_N > pq$ must satisfy $\mu \geq \tfrac{pq}{\area(A)}$, provided that $X \times Q$ is semipositive.\footnote{\hl{Semipositivity} is a technical condition which is automatic if $X \times Q$ has real dimension at most $6$, or if $X \times Q$ is monotone (i.e. the first Chern class and symplectic area class are positively proportional). This assumption could probably be removed by more abstract perturbation techniques.} 
\end{thm}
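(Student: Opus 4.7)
My plan is to derive the symplectic area inequality by combining a stabilization construction for the sesquicuspidal curve $C$ with a neck-stretching argument along the boundary of the embedded ellipsoid. The argument splits naturally into three stages.

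First, I would use the $(p,q)$-sesquicuspidal rational curve $C \subset X$ in class $A$ to produce a genus-zero $J$-holomorphic constraint curve $\wt C \subset X \times Q$ passing through a chosen point $\pp \in X \times Q$ with a $(p,q,b_1,\dots,b_N)$-type local tangency. Concretely, in a Darboux chart near $\pp$, $\wt C$ should be asymptotically modeled on the zero set of $x^p + y^q + \sum_{i=1}^N z_i^{m_i}$ for an appropriate choice of multiplicities $m_i$ compatible with the ellipsoidal weights, so that when one cuts out a small standard $E(p,q,b_1,\dots,b_N)$ neighborhood, the resulting punctured curve has a single negative end on the distinguished simple Reeb orbit of $\partial E(p,q,b_1,\dots,b_N)$ of action $pq$. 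Such a lifted curve exists because $C$ itself provides a nontrivial element of the corresponding stabilized moduli space: the auxiliary double points of $C$ are benign under deformation to a generic almost complex structure, and semipositivity of $X \times Q$ prevents $J$-holomorphic sphere bubbles of negative Chern number that could otherwise sabotage the construction.

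Second, I would transport the $(p,q,b_1,\dots,b_N)$-jet constraint to the image of $0 \in E(\tfrac{1}{\mu}(p,q,b_1,\dots,b_N))$ under the hypothesized embedding $\iota$, and neck-stretch along the contact-type hypersurface $\partial \iota(E)$, using an almost complex structure made cylindrical in a collar neighborhood. The SFT compactness theorem then produces a limiting pseudoholomorphic building whose top level is a punctured genus-zero curve $u_\infty$ in the symplectic completion of $(X \times Q) \setminus \iota(E)$, carrying the homology class of $\wt C$ and whose negative ends are asymptotic to Reeb orbits on $\partial \iota(E)$. Third, I would do the action and area bookkeeping. The total symplectic area of $u_\infty$ agrees with that of $\wt C$, which can be made arbitrarily close to $\area(A)$ by choosing the $Q$-component of $\wt C$ to contribute negligible area. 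The $(p,q,b_1,\dots,b_N)$-jet constraint, combined with the hypothesis $b_i > pq$, forces the sum of actions of the negative ends of $u_\infty$ to be at least $pq/\mu$: any configuration making use of Reeb orbits associated with the $z_i$-axes would incur action at least $b_i/\mu > pq/\mu$, so the minimal viable asymptotic is a single end on the distinguished $(p,q)$-orbit of action $pq/\mu$, with index considerations ruling out cheaper alternatives. Applying Stokes' theorem to $u_\infty$ yields $\area(A) \geq pq/\mu$, which is equivalent to $\mu \geq pq/\area(A)$.

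The main obstacle lies in the first stage: producing the constraint curve $\wt C$ and verifying the nonemptiness (with the correct asymptotic and tangency profile) of the relevant stabilized moduli space requires the full analytic machinery of \cite{cusps_and_ellipsoids}. Specifically, one needs a gluing/smoothing argument that promotes the algebraic (or merely symplectic) cuspidal curve $C \subset X$ into a bona fide $J$-holomorphic curve in $X \times Q$ satisfying the desired jet constraint at $\pp$, while simultaneously controlling the auxiliary ordinary double points of $C$ so that they do not eat into the codimension budget of the constraint. Semipositivity is the key hypothesis allowing the index and transversality counts to go through beyond dimension six, and checking that all sphere-bubble degenerations are harmless for the relevant virtual dimension is the most delicate technical point.
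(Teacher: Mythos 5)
Your proposal follows essentially the same route as the paper's brief outline: stabilize the sesquicuspidal curve $C$ to a constrained genus-zero curve in $X \times Q$, trade the cusp/jet constraint for a negative asymptotic end in the complement of the embedded ellipsoid, and conclude via Stokes' theorem. You also correctly identify the nonemptiness of the stabilized moduli space as the key technical input imported from the reference, and that semipositivity is what makes the compactness and transversality arguments work beyond dimension six.

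That said, a few details are imprecise or hand-wavy in ways that would matter if one tried to make this rigorous. The local model ``zero set of $x^p + y^q + \sum_i z_i^{m_i}$'' is a hypersurface in $\C^{2+N}$, not a curve, so it cannot literally describe $\wt C$ near $\pp$. The relevant constraint in this framework is a local tangency (jet) constraint $\langle \calT^{(m)}\pp\rangle$ of order $m = p+q-1$ with a chosen local divisor; the crucial point -- which your proposal does not surface -- is that this constraint has real codimension $2(n-2+m)$, which conspires with the genus-zero index formula to make the constrained index independent of the ambient dimension, and this is precisely why the moduli space stays zero-dimensional after stabilization. Second, your action bookkeeping rules out $z_i$-axis orbits using $b_i > pq$, but it does not rule out the cheaper orbits of action $p/\mu$ or $q/\mu$ (single windings around the first two axes). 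These are excluded because the jet/tangency constraint forces the limit curve's negative end to carry a specific winding number (an ECH-writhe-type argument), not by a crude action comparison; the phrase ``index considerations ruling out cheaper alternatives'' is hiding all the work. Finally, a minor point: the area of $\wt C$ is \emph{exactly} $\area(A)$, not merely arbitrarily close to it, since $[\wt C]$ lies in $H_2(X) \otimes [\op{pt}]$ and the symplectic form is split.
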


We will mostly take Theorem~\ref{thm:stab_obs_from_curve} as a black box, but the rough idea is as follows.
Firstly, we can find a compatible almost complex structure $J$ on $X$ which preserves $C$, and we consider the moduli space of $J$-holomorphic curves of the same type as $C$, after imposing an additional jet constraint to cut down the expected dimension to zero. By a version of automatic transversality in dimension four, every such curve counts positively, and the same is true if we view these curves as lying in a slice $X \times \{\op{pt}\}$ of $X \times Q$, where the latter is equipped with a split almost complex structure.
By a compactness argument, the corresponding moduli space of curves in $X \times Q$ persists under general deformations of the almost complex structure.
Moreover, we can trade the cusp singularity for an asymptotic negative end in (the symplectic completion of) $X \times Q$ minus a suitable ellipsoid.
Finally, given a hypothetical symplectic embedding $E(\tfrac{1}{\mu} \cdot (q,p,b_1,\dots,b_N)) \hooksymp X \times Q^{2N}$, the desired inequality follows by 
 noting that the  (appropriately normalized) 
symplectic area of every curve in this (necessarily nonempty) moduli space must be positive.

Note that for any symplectic embedding $E(a_1,a_2) \times \R^{2N} \hooksymp X \times \R^{2N}$ and 
any $b_1,\dots,b_N$, the image of $E(a_1,a_2,b_1,\dots,b_N)$ lands in $X \times B^{2N}(b)$ for some finite $b$, and hence there is also a symplectic embedding $E(a_1,a_2,b_1,\dots,b_N) \hooksymp X \times Q^{2N}$ for some closed symplectic manifold $Q$ of dimension $2N$. Thus Theorem~\ref{thm:stab_obs_from_curve} implies:

\begin{cor}\label{cor:stab_obs_from_curve}
  Let $X$ be a closed symplectic four-manifold which contains an index zero $(p,q)$-sesquicuspidal rational symplectic curve in homology class $A \in H_2(X)$ for some coprime $p,q \in \Z_{\geq 1}$. Then for all $N \in \Z_{\geq 0}$ we have 
  $c_{X \times \R^{2N}}(p/q) \geq \tfrac{p}{\area(A)}$, provided that either $X$ is monotone or $N \leq 1$. 
\end{cor}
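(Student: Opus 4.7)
The plan is to deduce this directly from Theorem~\ref{thm:stab_obs_from_curve}, with the only real task being the choice of a closed symplectic manifold $Q^{2N}$ for which $X \times Q$ is semipositive. Suppose toward contradiction that $c_{X \times \R^{2N}}(p/q) < p/\area(A)$, so that there exists $\mu < p/\area(A)$ together with a symplectic embedding $E(\tfrac{1}{\mu}, \tfrac{p}{q\mu}) \times \R^{2N} \hooksymp X \times \R^{2N}$. Rescaling via $\tilde\mu := q\mu$ rewrites the domain as $E(\tfrac{1}{\tilde\mu} \cdot (p,q)) \times \R^{2N}$ with $\tilde\mu < pq/\area(A)$, which is exactly the format required by Theorem~\ref{thm:stab_obs_from_curve}.

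Next, I would fix any $b_1,\dots,b_N > pq$ and note that $E(\tfrac{1}{\tilde\mu} \cdot (p,q,b_1,\dots,b_N))$ is a compact subset of $E(\tfrac{1}{\tilde\mu} \cdot (p,q)) \times \R^{2N}$, so its image in $X \times \R^{2N}$ lies in $X \times B^{2N}(b)$ for some finite $b$. Provided I can embed $B^{2N}(b)$ into a closed symplectic manifold $Q^{2N}$ for which $X \times Q$ is semipositive, composing the two embeddings yields $E(\tfrac{1}{\tilde\mu} \cdot (p,q,b_1,\dots,b_N)) \hooksymp X \times Q$. Theorem~\ref{thm:stab_obs_from_curve} then forces $\tilde\mu \geq pq/\area(A)$, i.e.\ $\mu \geq p/\area(A)$, contradicting the choice of $\mu$.

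The main obstacle is producing $Q$ with the right properties. When $N \leq 1$ the product $X \times Q$ has real dimension at most six and is therefore automatically semipositive, so any sufficiently large closed symplectic $Q$ works (a point when $N = 0$, or a closed surface of sufficiently large area when $N = 1$). When $N \geq 2$ the monotonicity hypothesis on $X$ comes in: I would take $Q = T^{2N} = \R^{2N}/\Lambda$ to be a large flat torus of Gromov width exceeding $b$. Since $\pi_2(T^{2N}) = 0$, every spherical class in $X \times T^{2N}$ projects trivially to the $T^{2N}$ factor, so its first Chern number and symplectic area agree with those of its $X$-component; the monotonicity of $X$ then makes $c_1$ positive on every $\om$-positive sphere class in $X \times T^{2N}$, which is more than enough for semipositivity. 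This is the only step where the hypothesis of the corollary is actually used; everything else is a formal consequence of compactness and rescaling.
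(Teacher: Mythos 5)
Your proof is correct and follows the same approach as the paper: interpret the hypothesized embedding of the stabilized ellipsoid $E(\tfrac{1}{\mu},\tfrac{p}{q\mu}) \times \R^{2N}$ via the compactness of $E(\tfrac{1}{\tilde\mu}\cdot(p,q,b_1,\dots,b_N))$ to land in $X\times B^{2N}(b)$, then cap off $B^{2N}(b)$ inside a closed $Q^{2N}$ and invoke Theorem~\ref{thm:stab_obs_from_curve}. The one place you go beyond the paper's terse argument is the semipositivity step: the paper merely asserts that some closed $Q$ works, leaving it to the reader to notice that $N\le 1$ gives $\dim(X\times Q)\le 6$, and that for $N\ge 2$ one should take $Q = T^{2N}$ so that $\pi_2(T^{2N})=0$ makes every spherical class come from the $X$-factor, whence monotonicity of $X$ forces $c_1>0$ on $\omega$-positive sphere classes in $X\times T^{2N}$. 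That torus argument is exactly right and is what the authors had in mind (indeed it is also the construction used in the proof sketch of Theorem~\ref{thm:stab_obs_from_curve}), so you have usefully made explicit a point the paper treats as obvious; there is no substantive divergence in method.
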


Using Corollary~\ref{cor:stab_obs_from_curve}, we now deduce Theorem~\ref{thmlet:SEEP_dPs} from Theorem~\ref{thmlet:curves_in_X}.

\begin{proof}[Proof of Theorem~\ref{thmlet:SEEP_dPs}]
  
Given a smooth complex projective surface $X$ and a rational algebraic curve $C$ in $X$ with a $(p,q)$ cusp satisfying $p+q = c_1([C])$, a small perturbation of $C$ gives an index zero $(p,q)$-sesquicuspidal rational symplectic curve in $X$. If $X$ is unimonotone, then Corollary~\ref{cor:stab_obs_from_curve} gives the embedding obstruction
\begin{align}
c_{X \times \R^{2N}}(p/q) \geq \tfrac{p}{\area([C])} = \tfrac{p}{c_1([C])} = \tfrac{p}{p+q} = \tfrac{(p/q)}{(p/q)+1}. \label{eq:lb_from_curve} 
\end{align}

In \ref{item:SEEP_rigid}, the upper bound $c_{X \times \R^{2N}}(a) \leq c_X(a)$ for all $a \geq 1$ is manifest by stabilizing four-dimensional symplectic embeddings.
In the other direction, by combining the outer corner curves in Theorem~\ref{thmlet:curves_in_X}\ref{item:curves_in_X_rigid} with \eqref{eq:lb_from_curve}, 
we get the lower bound $c_{X \times \R^{2N}}(a) \geq c_X(a)$ whenever $a$ is the $x$-value of an outer corner of the infinite staircase in $c_{X}|_{[1,a_\acc^X]}$, and hence for all $1 \leq a \leq a_\acc^X$ by scaling and monotonicity considerations (see \cite[Cor. C]{mcduff2024singular} for more details).
Similarly, the curves with $p/q > a_\acc^X$ in Theorem~\ref{thmlet:curves_in_X}\ref{item:curves_in_X_rigid} give the lower bound $c_{X \times \R^{2N}}(a) \geq \tfrac{a}{a+1}$ for all $a$ in a dense subset of $[a_\acc^X,\infty)$, and hence for all $a$ in $[a_\acc^X,\infty)$ by continuity of $c_{X \times \R^{2N}}$. 
Meanwhile, the upper bound $c_{X \times \R^{2N}}(a) \leq c_X(a)$ for all $a > a_\acc^X$ follows from the explicit symplectic embedding constructed in \cite[Prop. 3.1]{cristofaro2022higher} (see \cite[Prop. 7.2.1]{mcduff2024singular}). 
As for \ref{item:SEEP_nonrigid}, the lower bound $c_{X \times \R^{2N}}(a) \geq \tfrac{a}{a+1}$ for all $a \geq 1$ follows similarly by combining Theorem~\ref{thmlet:curves_in_X}\ref{item:curves_in_X_nonrigid} with \eqref{eq:lb_from_curve}.
\end{proof}

\begin{rmk}
For coprime $p,q \in \Z_{\geq 1}$ with $p+q$ divisible by $3$, the above proof actually gives $c_{\CP^2 \times \R^{2N}}(1,p/q,b_1,\dots,b_N) = c_{\CP^2 \times \R^{2N}}(p/q)$ whenever $b_1,\dots,b_N > p$, where we put
$c_{X^4 \times \R^{2N}}(\veca) := \inf\left\{\mu\;|\; E(\tfrac{1}{\mu} \cdot \veca) \hooksymp X^4 \times \R^{2N}\right\}$ for $\veca \in \R^{2+N}_{>0}$.
As a step towards computing $c_{\CP^2 \times \R^{2N}}(\veca)$, it is natural to ask what is the supremal $b$ for which $c_{\CP^2 \times \R^{2N}}(1,p/q,b,\dots,b) \neq c_{\CP^2 \times \R^{2N}}(p/q)$.
\end{rmk}  

\sss

We end this section with a heuristic which suggests that the lower bounds in Theorem~\ref{thmlet:SEEP_dPs}\ref{item:SEEP_nonrigid} (and also Remark~\ref{rmk:SEEP_for_4_pt_bu}) should be optimal.
\begin{rmk}\label{rmk:eliashberg_princ}
For a unimonotone closed symplectic four-manifold $X$, we observed above that any index zero $(p,q)$-sesquicuspidal symplectic rational curve in $X$ gives the lower bound 
\begin{align}\label{eq:HK_lb}
c_{X \times \R^{2N}}(a) \geq \tfrac{a}{a+1}
\end{align}
 for $a = p/q$. 
By trading the cusp for a negative puncture as in \cite[Thm. D]{cusps_and_ellipsoids},
this lower bound can be understood in terms of index zero rational pseudoholomorphic planes in the completed symplectic cobordism $\wh{W}$, 
where $W$ denotes $X \times Q$ minus a small ellipsoid with area factors proportional to $(p,q,b_1,\dots,b_N)$, with $b_1,\dots,b_N \gg p,q$ and $Q^{2N}$ a sufficiently large closed symplectic manifold.
Here the fact that we are considering curves with genus zero and exactly one negative end translates crucially into the fact that the index does not change as we stabilize from $X$ to $X \times Q$ (see e.g. \cite[\S2.6]{cusps_and_ellipsoids}).

In general, one could imagine using various other moduli spaces of asymptotically cylindrical punctured curves in $\wh{W}$ (\`a la symplectic field theory) in order to read off lower bounds for $c_{X \times \R^{2N}}$ which might improve upon \eqref{eq:HK_lb}. 
These moduli spaces should have nonnegative index in order to have a reasonable chance at persisting under deformations, but in principle they could include curves of higher genus, with multiple negative ends, and possibly carrying additional constraints.
However, by purely formal considerations based on index and action, one can show that these  lower bounds (which are given ultimately by 
using the positivity of symplectic area
as above) cannot improve upon \eqref{eq:HK_lb}; see e.g. \cite[\S7]{siegel2023symplectic} for an analogous formal argument in the setting of higher dimensional ball packings.
Thus, Eliashberg's ``holomorphic curves or nothing'' metaprinciple suggests that the lower bound $c_{X \times \R^{2N}}(a) \geq \tfrac{a}{a+1}$ should be optimal whenever it holds.
\end{rmk}

\section{Symmetries of del Pezzo surfaces}\label{sec:symmetries}

Our main goal in this section is to deduce Theorem~\ref{thmlet:curves_in_X} (except for the case $X = \CP^1 \times \CP^1$) from Theorem~\ref{thmlet:sesqui_plane_curves}.
The basic idea is to blow up (weakly) sesquicuspidal curves in the projective plane in order to embed them into lower degree del Pezzo surfaces, and then to apply the birational symmetries $\Phi_X,\Psi_X$ from \cite[\S2.3]{mcduff2024singular} in order to further enlarge these curve families.
The case of  $\CP^1 \times \CP^1$ is exceptional and is postponed until \S\ref{sec:std_sd}.
Incidentally, the scattering diagram argument given in \S\ref{subsec:wp_and_basic_sd} will independently prove a more explicit version of the density statement in Theorem~\ref{thmlet:curves_in_X}(a) in the four cases with $J=2$ strands (including $\CP^1 \times \CP^1$), whereas the argument in this section applies uniformly for blowups of complex projective space irrespective of $J$.

Let $\calN$ be a uninodal anticanonical divisor in a del Pezzo surface $X$, and let $\calB_-,\calB_+$ denote the two local branches of $\calN$ near its double point $\db$.
Let $K+2$ denote the degree of $X$, which is by definition $\calN \cdot \calN \in \{1,\dots,9\}$.
We will say that a curve $C$ in $X$ is \hl{$(p,q)$-well-placed with respect to $(\calN;\calB_-,\calB_+)$} if $C$ intersects $\calN$ only at $\db$, $C$ is locally irreducible near $\db$, and we have local intersection multiplicities $(C \cdot \calB_-)_{\db} = p$ and $(C \cdot \calB_+)_\db = q$.
Note that this implies $p+q = c_1([C])$, i.e. $\ind_\C^{p,q}(C) = 0$, and, for $p,q$ coprime, $C$ has a $(p,q)$ cusp.
We will often suppress $\calB_-,\calB_+$ (and sometimes also $\calN$) from the notation if the local branches (and the divisor itself) are implicit or immaterial.
\begin{convention}\label{conv:pq=0}
It will also be convenient to slightly extend the above definition by saying that $C$ is \hl{$(k,0)$-well-placed} if $C \cap \calN$ is a single smooth point of $\calN$ and $C \cdot \calN = k$, with $(0,k)$-well-placed curves defined in the exact same way.
\end{convention}

For $K+2 \geq 5$, \cite[\S2]{mcduff2024singular} defines a birational transformation $\Phi_X: X \dashrightarrow X$ which proceeds by blowing up $X$ at $K$ points infinitely near to the node of $\calN$, each corresponding to the branch $\calB_+$, and then blowing down $K$ times starting with the strict transform of $\calN$, and finally identifying the result isomorphically with $X$. 
Swapping the roles of $\calB_+$ and $\calB_-$ gives another birational transformation $\Psi_X: X \dashrightarrow X$.
Koll\'ar \cite{Kol} extends these birational transformations to the case $K+2 = 4$, and also upgrades them to isomorphisms of $X \setminus \calN$.\footnote{Note that $\Phi_X,\Psi_X$ are denoted by $\si_+,\si_-$ respectively in \cite{Kol} and refered to as \hl{Geiser-type involutions}.}
These papers show that if $C$ is $(p,q)$-well-placed with respect to $(\calN;\calB_-,\calB_+)$, then $\Phi_X(C)$ is $(p',q')$-well-placed with respect to $(\calN;\calB_-,\calB_+)$, where 
\begin{itemize}
  \item $(p',q') = (p,Kp-q)$ if $p/q > 1/K$
  \item $(p',q') = (q-Kp,p+K(q-Kp))$ if $p/q \leq 1/K$
\end{itemize}
(see \cite[Cor. 31]{Kol}).
Similarly, if $C$ is $(p,q)$-well-placed with respect to $(\calN;\calB_-,\calB_+)$, then $\Psi_X(C)$ is $(p',q')$-well-placed with respect to $(\calN;\calB_-,\calB_+)$, where
\begin{itemize}
  \item $(p',q') = (q+K(p-Kq),p-Kq)$ if $p/q \geq K$
  \item $(p',q') = (Kq-p,q)$ if $p/q \leq K$.
\end{itemize}
The effects of $\Phi_X,\Psi_X$ on the homology class of $C$ are also described in \cite{Kol} (c.f. Remark~\ref{rmk:alt_adj_arg}).

Let $\Sset_X$ be defined as in Theorem~\ref{thmlet:curves_in_X},
and let $\Sset_X^\calN \subset \Sset_X \subset [1,\infty)$ denote the set of reduced fractions $p/q \geq 1$ for which there exists a rational algebraic curve in $X$ which is $(p,q)$-well-placed with respect to $(\calN;\calB_-,\calB_+)$ or $(\calN;\calB_+,\calB_-)$.
Defining functions
\begin{align*}
\Ssym_X: (1,\infty) \ra (1,\infty),&\;\;\;\;\; \Ssym_X(x) = K - \tfrac{1}{x} \\
\Rsym_X: (K,\infty) \ra (K,\infty),&\;\;\;\;\; \Rsym_X(x) = K + \tfrac{1}{x-K},
\end{align*}
the above discussion shows that, for $X$ a del Pezzo surface of degree at least $4$, the sets $\Sset_X^\calN \cap (1,\infty)$ and $\Sset_X^\calN \cap (K,\infty)$ are preserved by $\Ssym_X$ and $\Rsym_X$ respectively. 
For $X$ rigid (i.e. $K+2 \geq 5$), let $a_\acc^X = \tfrac{1}{2}(K + \sqrt{K^2-4})$ denote the unique fixed point of $\Ssym_X$ lying in $(1,\infty)$.\footnote{Note that the other fixed point of $x \mapsto K - \tfrac{1}{x}$ is $\tfrac{1}{a_\acc} < 1$.}
This agrees with the staircase accumulation point of the corresponding ellipsoid embedding function in the case $K+2 \geq 5$, while for $K+2 = 4$ we have $a_\acc^X = 1$.
One can readily check the following:
\begin{enumerate}[label=(\roman*)]
  \item $K-1 < a_\acc^X < K$
  \item $(a_\acc^X,\infty) = \bigsqcup\limits_{i=0}^\infty \Ssym_X^i([K,\infty))$
  \item $\Ssym_X$ is strictly increasing on $(1,a_\acc^X)$ and strictly decreasing on $(a_\acc^X,\infty)$
  \item $\Rsym_X$ is an involution which fixes $K+1$ and exchanges $(K,K+1)$ with $(K+1,\infty)$.
\end{enumerate}

For $\calN$ a uninodal anticanonical divisor in a smooth complex projective surface $X$, let $\pp \in \calN$ be a smooth point, let $\wt{X}$ be the blowup of $X$ at $\pp$, and let $\wt{\calN} \subset \wt{X}$ be the strict transform of $\calN$.
Observe that if $C \subset X$ is $(p,q)$-well-placed with respect to $\calN$ then its strict transform $\wt{C} \subset \wt{X}$ is $(p,q)$-well-placed with respect to $\wt{\calN}$.
In particular, we have 
\begin{align}\label{eq:blowup_subset}
\Sset_X^\calN \subset \Sset_{\wt{X}}^{\wt{\calN}}.
\end{align}

\begin{prop}\label{prop:Thm_F_holds} Theorem~\ref{thmlet:curves_in_X} holds for all $X \neq \CP^1 \times \CP^1$.
\end{prop}
\begin{proof}
Suppose that $X$ is the del Pezzo surface given by blowing up $\CP^2$ at points $\pp_1,\dots,\pp_j \in \CP^2$ for some $j \in \{0,\dots,8\}$, where no $3$ of the points line on a line and no $6$ lie on a conic.
Let $\calN_X \subset X$ be any given uninodal anticanonical divisor, and let $\calN \subset \CP^2$ be its image under the blowdown map $X \ra \CP^2$.
Noting that $\calN$ necessarily passes through $\pp_1,\dots\,\pp_j$, let $\pp_{j+1},\dots,\pp_8$ be some additional distinct points in $\calN$. 
For $i = 1,\dots,8$, let $\bl^i\CP^2$ denote the blowup of $\CP^2$ at $\pp_1,\dots,\pp_i$, let $\calN^{(i)} \subset \bl^i\CP^2$ denote the strict transform of $\calN$, and let $\Sset^{\calN^{(i)}}_{\bl^i\CP^2} \subset \Sset_{\bl^i\CP^2}$ denote the set of reduced fractions $p/q \geq 1$ for which there exists a rational algebraic curve in $\bl^i\CP^2$ which is $(p,q)$-well-placed with respect to $\calN^{(i)}$ (for some labeling of the branches near the double point). Note that here we have $\Sset_{\bl^j\CP^2}^{\calN^{(j)}} = \Sset_{X}^{\calN_X}$.

It follows by Theorem~\ref{thmlet:sesqui_plane_curves} that $\Sset_{\CP^2}^{\calN}$ is dense in $[a_\acc^{\CP^2},\infty)$, where $a_\acc^{\CP^2} = \tau^4 \approx 6.85$.
According to \eqref{eq:blowup_subset}, we have $\Sset_{\CP^2}^\calN \subset \Sset_{\bl^1\CP^2}^{\calN^{(1)}}$, so the latter is also dense in $[a_\acc^{\CP^2},\infty)$.
By applying the symmetry $\Rsym_X$, it follows that $\Sset_{\bl^1\CP^2}^{\calN^{(1)}}$ is dense in $(6,\infty)$, and 
hence also in $[a_\acc^{\bl^1\CP^2},\infty)$ by repeatedly applying the symmetry $\Ssym_X$.
Similarly, we have $\Sset_{\bl^1\CP^2}^{\calN^{(1)}} \subset \Sset_{\bl^2\CP^2}^{\calN^{(2)}}$, so the latter is dense in $[a_\acc^{\bl^1\CP^2},\infty)$, and hence also in $[5,\infty)$ by applying the symmetries $\Ssym_X$ and $\Rsym_X$.
Continuing inductively in this manner, we find that $\Sset_{\bl^i\CP^2}^{\calN^{(i)}}$ is dense in $[a_\acc^{\bl^i\CP^2},\infty)$ for $i = 0,\dots,4$, and in $[1,\infty)$ for $i = 5$.
If $j \leq 5$ we are done, while for $j \geq 6$ we have $\Sset_{\bl^5\CP^2}^{\calN^{(5)}} \subset \Sset_{\bl^j\CP^2}^{\calN^{(j)}}$ (again by \eqref{eq:blowup_subset}), and hence the latter is also dense in $[1,\infty)$.

Finally, to prove the claim about curves below the accumulation point, let $X$ be a unimonotone rigid del Pezzo surface, and note that we have $c_{X}(a) = \tfrac{a}{a+1}$ whenever $a$ is the $x$-value of an outer corner of the infinite staircase $c_X|_{[1,a_\acc^X]}$, while $c_X(a) < \tfrac{a}{a+1}$ for all other values of $a \in [1,a_\acc^X]$.
At the same time, for each $p/q \in \Sset_X \cap [1,a_\acc^X]$ we have
$c_X(p/q) \geq \tfrac{p/q}{p/q+1}$ by ~\eqref{eq:HK_lb}, and hence $p/q$ must be the $x$-value of an outer corner.
\end{proof}
\begin{rmk}\label{rmk:gen_rat_surf_II}
Following up on Remark~\ref{rmk:jgeq9case}(1), 
let $X$ be a del Pezzo surface, and suppose that $C$ is a rational algebraic curve in $X$ which is $(p,q)$-well-placed with respect to a uninodal anticanonical divisor $\calN \subset X$.
Let $\wt{X}$ be the blowup of $X$ at a finite set of points $\pp_1,\dots,\pp_k$ in $X \setminus C$, and let $\wt{C}$ denote the strict transform of $C$ in $\wt{X}$.
Then $\wt{C}$ still has a $(p,q)$ cusp and satisfies $\ind_\C^{p,q}(\wt{C}) = 0$, whence $p/q \in \Sset_{\wt{X}}$. 
In particular, the condition $\pp_1,\dots,\pp_k \in X \setminus C$ evidently holds if $\pp_1,\dots,\pp_k$ lie in the smooth locus of $\calN$, or if we assume that $\pp_1,\dots,\pp_k$ are disjoint from the space of $(p,q)$-well-placed rational algebraic curves in $X$.
Since the latter space contains no positive dimensional families by Lemma~\ref{lem:no_pos_dim_fams},
it follows as in the proof above that $\Sset_X$ is dense in $[1,\infty)$ whenever $X$ is a blowup of $\CP^2$ at $k \geq 5$ points in very general position\footnote{Here we say that a condition holds for points $\pp_1,\dots,\pp_k \in X$ in \hl{very general position} if the subspace of $X^{\times k}$ for which it fails is contained in a countable union of proper subvarieties.} (in fact, in this case $\Sset_X = [1,\infty) \cap \Q$ by Remark~\ref{rmk:precise_density}).
\end{rmk}

\begin{rmk}\label{rmk:alt_adj_arg}

Notice that the last paragraph in the above proof of Proposition~\ref{prop:Thm_F_holds}
relies on knowledge of symplectic embeddings in order to rule out algebraic curves. 
Alternatively, one can also characterize curves below the accumulation point using only the adjunction formula and some combinatorics, as we now briefly sketch. 
Let $X$ be a rigid del Pezzo surface of degree $K+2 \in \{5,\dots,9\}$, and let $C$ be a rational algebraic curve in $X$ with a $(p,q)$ cusp such that $\ind^{p,q}_\C(A) := c_1(A) - p - q  = 0$, where $A := [C] \in H_2(X)$.
The adjunction formula reads $c_1(A) = 2 + A \cdot A - (p-1)(q-1) - 2\de$, 
where $\de \in \Z_{\geq 0}$ is the count of singularities away from the $(p,q)$ cusp.
One can show\footnote{For instance, this follows directly from the so-called light cone inequality as discussed in \cite[Prob. 4.5]{steele2004cauchy}.} that for any $B_1,B_2 \in H_2(X)$ with nonnegative self-intersection numbers, we have $(B_1 \cdot B_1)(B_2 \cdot B_2) \leq (B_1 \cdot B_2)^2$, and applying this inequality with $B_1 = A$ and $B_2 = - \calK_X$ (the anticanonical class) gives
\begin{align}
 p^2  - Kqp + q^2 + (K+2)(1-2\de) \geq 0.
\end{align}
We now consider the ``mutation'' 
\begin{align}
(p',q';A') := \FF_X(p,q;A) := (q,Kq-p;-q\calK_X-A),
\end{align}
which one can check preserves the above index zero and adjunction conditions (with the same values of $K$ and $\de$).
Moreover, by repeatedly applying $\FF_X$ we can iteratively decrease $q$ until we eventually arrive at a new pair $(p_0,q_0) \neq (K-1,1)$ such that either
\begin{multicols}{2}
\begin{enumerate}[label=(\roman*)]
  \item $q_0 = 1$ and $\de =0$, or
  \item $q_0=2$, $\de = 0$, and $K=3$.
\end{enumerate}
\end{multicols}
Thus $(p,q,A) = \FF_X^{-j}(p_0,q_0,A_0)$ for some $j \in \Z_{\geq 0}$, and one can check that the triples $(p_0,q_0,A_0)$ which are ``minimal'' as above correspond precisely to the seeds considered in \cite[\S2.4]{mcduff2024singular}, which generate the outer corner curves by successive applications of $\Phi_X$.
In particular, there are precisely $J$ seed pairs $(p_0,q_0)$, where $J=3$ when $X$ is $\bl^1\CP^2$ or $\bl^2\CP^2$
  and $J = 2$ in the remaining cases.
\end{rmk}

\begin{rmk}
The combinatorial argument in Remark~\ref{rmk:alt_adj_arg} is closely analogous to the one appearing in \cite[\S4]{gross2010quivers} to characterize the discrete part of basic scattering diagrams. 
If we restrict to those curves which are well-placed, then the numerical symmetry $\FF_X$ is geometrically realized by $\Phi_X$, after precomposing by the map $(p,q) \mapsto (q,p)$ corresponding to swapping the branches $\calB_-,\calB_+$. 
If we further restrict to the two-stranded cases (i.e. $J=2$), then the last sentence of Theorem~\ref{thmlet:curves_in_X}(a) is in fact equivalent to \cite[Thm. 5]{gross2010quivers} under our fundamental bijection (see \S\ref{subsec:wp_and_basic_sd} below).
\end{rmk}

\begin{rmk}\label{rmk:adj_not_compl_obs}
Fix a smooth complex projective surface $X$, and consider coprime $p,q \in \Z_{\geq 1}$ and a homology class $A \in H_2(X)$ satisfying the index zero condition $p+q = c_1(A)$.
Then the adjunction inequality $A \cdot A - c_1(A) + 2 \geq (p-1)(q-1)$ is a necessary condition for the existence of a rational algebraic curve with a $(p,q)$-cusp in homology class $A$, and in the case $X = \CP^2$ it follows from Theorem~\ref{thmlet:sesqui_plane_curves} that this is also a sufficient condition.

However, for other del Pezzo surfaces the situation is more subtle.
For example, in the first Hirzebruch surface $F_1 = \CP^2 \# \ovl{\CP}^2$, the ``fake perfect exceptional'' homology class $A = 9\ell - 5e_1$ 
satisfies the index zero and adjunction conditions for $(p,q) = (19,3)$, but there does not exist any rational algebraic curve $C$ in $F_1$ with a $(19,3)$ cusp such that $[C] = A$ (such a curve would necessarily be unicuspidal).
Indeed, if such a curve existed, then its minimal normal crossing resolution would be a nonsingular curve $\wt{C}$ in a $10$-point blowup of $\CP^2$ with $[\wt{C}] = 9\ell - 5e_1 - 3e_2 - \cdots - 3e_7 - e_8 - e_9 - e_{10}$, which impossibly intersects the cubic class $3\ell - 2e_1 - e_2 - \cdots - e_7$ negatively
(see \cite[Rmk. 2.1.15]{magill2022staircase} for more details). 
At the same time, this argument does {\em not} rule out a rational algebraic curve in $F_1$ with a $(19,3)$ cusp lying in homology class $8\ell - 2e_1$, and this is consistent with Theorem~\ref{thmlet:curves_in_X} since we have $19/3 > a_\acc^{F_1} \approx 5.83$ (such a curve necesarrily has $\de_{19,3} = 2$ singularities).
\end{rmk}

\section{The fundamental bijection}\label{sec:fund_bij}

In this section, we show that any uninodal Looijenga pair $(X,\calN)$ induces a bijection between (a) well-placed curves in $X$, and (b) curves in an edge blowup of a certain singular toric surface which intersect the preferred anticanonical divisor in one point.
Here the curves in (a) are our main geometric object of interest, while the curves in (b) are fruitfully encoded using scattering diagrams (as we discuss in the next section).
After explaining the relevant notions and terminology in \S\ref{subsec:tor_mod_prelim}, we formulate the general bijection abstractly in \S\ref{subsec:tor_mod_fund_bij}.
Finally, in \S\ref{subsec:rigid_dP_tor_mods} we give explicit small toric models for each of the rigid del Pezzo surfaces, and we compute the corresponding bijections.

\subsection{Toric models for uninodal Looijenga pairs}\label{subsec:tor_mod_prelim}

Following e.g. \cite{gross2015mirror}, a \hl{Looijenga pair} $(X,\Ddiv)$ is a smooth complex projective surface $X$ together with a nodal anticanonical divisor $\Ddiv$ with at least one node. 
For example:
\begin{itemize}
  \item If $V^\tor$ is a smooth toric surface with toric boundary divisor $\Ddiv^\tor$, then $(V^\tor,\Ddiv^\tor)$ is a Looijenga pair, which we call a \hl{toric Looijenga pair}. 

  \item If $(X,\Ddiv)$ is a Looijenga pair, $X_+$ is the blowup of $X$ at one or more nodes of $\Ddiv$, and $\Ddiv_+ \subset X_+$ is the total transform of $\Ddiv$, then $(X_+,\Ddiv_+)$ is again a Loojienga pair, which we call a \hl{corner blowup} of $(X,\Ddiv)$.

    \item If $(X,\Ddiv)$ is a Looijenga pair, $X[\calS]$ is the blow up of $X$ along a set of smooth points $\calS \subset \Ddiv$, and $\Ddiv[\calS] \subset X[\calS]$ is the strict transform of $\Ddiv$, then $(X[\calS],\Ddiv[\calS])$ is again a Looijenga pair, which we call an \hl{edge blowup} of $(X,\Ddiv)$.

\end{itemize}

We will say that a Looijenga pair $(X,\Ddiv)$ is \hl{uninodal} if $\Ddiv$ has exactly one node (or equivalently $\Ddiv$ is irreducible); we will often denote uninodal anticanonical divisors by $\calN$.

\begin{definition}\label{def:tor_mod}
A \hl{toric model} $\tormod$ for a uninodal Looijenga pair $(X,\calN)$ is:
\begin{itemize}
  \item a sequence $(X^{(0)},\calN^{(0)}),\dots,(X^{(k)},\calN^{(k)})$, where $(X^{(0)},\calN^{(0)}) = (X,\calN)$ and, for $j = 1,\dots,k$, $X^{(j)}$ is the blowup of $X^{(j-1)}$ at some node $\pp^{(j-1)} \in \calN^{(j-1)}$, with $\calN^{(j)} \subset X^{(j)}$ the strict transform of $\calN^{(j-1)}$
  \item $\E_1,\dots,\E_\ell \subset X^{(k)}$ pairwise disjoint smoothly embedded rational curves with self-intersection number $-1$

  \item a toric Looijenga pair $(V^\tor,\Ddiv^\tor)$

  \item a birational morphism $X^{(k)} \ra V^\tor$ sending $\calN^{(k)}$ to $\Ddiv^\tor$ which has exceptional divisors $\E_1,\dots,\E_\ell$ lying over smooth points $\calS \subset \Ddiv^\tor$ and is otherwise an isomorphism.

\end{itemize}
\end{definition}

\NI In particular, $(X^{(k)},\calN^{(k)})$ is isomorphic to the edge blowup $(V^\tor[\calS],\Ddiv^\tor[\calS])$ of $(V^\tor,\Ddiv^\tor)$ along $\calS$.
Note that there are $j$ possibilities for the node $\pp^{(j-1)} \in \calN^{(j-1)}$, and in particular $\pp^{(0)}$ is the unique node of $\calN$.

The following notation will be useful in the sequel:
\begin{notation}\label{not:tormod_m_and_l}
Let $\tormod$ be a toric model for a uninodal Looijenga pair $(X,\Ddiv)$ as in Definition~\ref{def:tor_mod}, and assume that $V^\tor$ has associated fan\footnote{See e.g. \cite{cox2011toric,fulton1993introduction} for the definition of fans and other standard terminology from toric algebraic geometry.} $\Si$ in $\MM_\R := \MM \otimes_\Z \R$ for some rank two lattice $\MM$.
Let
\begin{itemize}
    \item $-\mm_1,\dots,-\mm_J \in \MM$ be the primitive generators of those rays of $\Si$ which correspond to a divisor of $V^\tor$ containing at least one point in $\calS$.

    \item $\ell_j$ be the number of points in $\calS$ lying on the toric divisor associated with the ray $\R_{\geq 0} \cdot (-\mm_j)$, for $j=1,\dots,J$.
  \end{itemize}
\end{notation}
\NI  Thus $\ell = \sum\limits_{j=1}^J \ell_j$, where $\E_1,\dots,\E_\ell$ are the exceptional divisors of $X^{(k)} \ra V^\tor$. 

\begin{example}
Our primary example of a uninodal Looijenga pair is $(\CP^2,\calN_0)$, where $\calN_0$ is the nodal cubic $\{x^3 + y^3 = xyz\} \subset \CP^2$.
In \S\ref{subsec:rigid_dP_tor_mods}, we describe our preferred toric model $\tormod_{\CP^2}$ for which $V^\tor = F_3$ is the third Hirzebruch surface.
Here we view the fan $\Si$ as having ray generators $(0,-1),(1,3),(0,1),(-1,0)$ given by the primitive outward normals to the corresponding moment polygon with vertices $(0,0),(4,0),(1,1),(0,1)$. For this toric model we have $J = 2$, $\mm_1 = (1,0)$, $\mm_2 = (-1,-3)$, $\ell = 2$, and $\ell_1 = \ell_2 = 1$ (c.f. Table~\ref{table:fund_bij}).
\end{example}

Abstractly, any uninodal Looijenga pair $(X,\calN)$ admits a toric model by \cite[Prop 1.3]{gross2015mirror}. In fact, there are typically many inequivalent toric models, and we will find it fruitful to seek toric models with $J$ as small as possible, as this will mean the associated scattering diagram has few initial rays (see \S\ref{sec:from_wp_to_sd_and_back}).

\subsection{The fundamental bijection}\label{subsec:tor_mod_fund_bij}

Let $(X,\calN)$ be a uninodal Looijenga pair with a toric model $\tormod$ as in Definition~\ref{def:tor_mod}. 
We assume that the corresponding toric Looijenga pair $(V^\tor,\Ddiv^\tor)$ has associated fan $\Si$ in $\MM_\R$ for some rank two lattice $\MM$, i.e. $V^\tor = V_\Si$.
For nonzero $\mm \in \MM$, let $\gcd_\MM(\mm)$ denote the largest $j \in \Z_{\geq 1}$ such that $\mm \in j\MM$ (this is just the usual greatest common divisor of the components of $\mm$ when $\MM = \Z^2$). 
Below we will define the following data associated to $\tormod$:
\begin{itemize}
  \item for each nonzero $\mm \in \MM$, a Looijenga pair $(V^\tor_{+\mm}[\calS],\Ddiv^\tor_{+\mm}[\calS])$ which is typically an edge blowup of a (weighted\footnote{Here \hl{weighted blowup} at a point means concretely a birational transformation modeled on the passage from $V_\Si$ to $V_{\Si'}$, where $V_{\Si},V_{\Si'}$ are toric varieties modeled on complete fans $\Si,\Si'$ respectively, such that $\Si'$ refines $\Si$ by adding one ray. As explained in  \cite[\S3.1]{mcduff2024singular}, symplectically this corresponds to an ellipsoidal blowup.}) corner blowup of $(V^\tor,\Ddiv^\tor)$

  \item a distinguished irreducible component $\Ddiv_\out \subset \Ddiv^\tor_{+\mm}[\calS]$

  \item a piecewise linear map $\wp_X: \Z^2_{\geq 0} \ra \MM$ 

\end{itemize}
such that the following holds: 

\begin{prop}\label{prop:fund_bij}
For each $p,q \in \Z_{\geq 0}$ not both zero, there is a bijection between
\begin{enumerate}[label=(\alph*)]
\item simple rational algebraic curves in $X$ which are $(p,q)$-well-placed with respect to $\calN$, and 
  \item simple rational algebraic curves in $V^\tor_{+\wp_X(p,q)}[\calS]$ which intersect $\Ddiv_\out$ in one point with contact order $\gcd(p,q)$ and are otherwise disjoint\footnote{In particular, this means that the unique intersection point of such a curve with $\Ddiv_\out$ lies in the complement of the other components of $\Ddiv^\tor_{+\wp_X(p,q)}[\calS]$.} from $\Ddiv^\tor_{+\wp_X(p,q)}[\calS]$. 
\end{enumerate}
Moreover, $\wp_X$ descends to a bijection from $\nicequot{\Z^2_{\geq 0}}{\sim}$ to $\MM$, where we put $(j,0) \sim (0,j)$ for each $j \in \Z_{\geq 1}$, and we have $\gcd(p,q) = \gcd_\MM(\wp_X(p,q))$.
\end{prop}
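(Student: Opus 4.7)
The plan is to inductively apply the sequence of $k$ corner blowups of $\tormod$, transforming a $(p,q)$-well-placed curve via a Euclidean-algorithm-style reduction, and then performing a final (possibly weighted) toric blowup at a torus-fixed point of $V^\tor$ to separate the strict transform from the boundary.

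I first define $\wp_X$ and the associated objects. Let $\mu_\calN \in \MM$ denote the primitive generator of the ray of $\Si$ corresponding to the unique irreducible component of $\Ddiv^\tor$ which is the image of the strict transform of $\calN$ under $X^{(k)} \to V^\tor$. The $k$ remaining rays of $\Si$ have primitive generators $\mu_- = \nu_1, \nu_2, \ldots, \nu_{k-1}, \nu_k = \mu_+$ in cyclic order going ``around'' $\mu_\calN$, each arising from one of the $k$ corner exceptional divisors of $\tormod$ and inserted as the sum of its two neighbors in $\Si$ at the moment of insertion. The $k+1$ cones of $\Si$ correspond bijectively to $k+1$ subcones of $\Z^2_{\geq 0}$ obtained by an analogous Farey-style subdivision starting from the boundary rays $(1,0), (0,1)$. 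I define $\wp_X: \Z^2_{\geq 0} \to \MM$ to be $\Z$-linear on each subcone, sending the generators of each subcone of $\Z^2_{\geq 0}$ to the generators of the corresponding cone of $\Si$, with both boundary rays $(1,0)$ and $(0,1)$ of $\Z^2_{\geq 0}$ mapping to $\R_{\geq 0}\mu_\calN$. For nonzero $\mm \in \MM$, set $V^\tor_{+\mm} := V^\tor$ if $\R_{\geq 0}\mm$ is already a ray of $\Si$, and otherwise let $V^\tor_{+\mm} \to V^\tor$ be the (possibly weighted) toric blowup adding this ray, with $\Ddiv_\out$ the toric divisor corresponding to $\R_{\geq 0}\mm$; finally $(V^\tor_{+\mm}[\calS], \Ddiv^\tor_{+\mm}[\calS])$ is the edge blowup of $(V^\tor_{+\mm}, \Ddiv^\tor_{+\mm})$ at the preimage of $\calS$.

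Second, the forward map from (a) to (b): given a $(p,q)$-well-placed simple rational curve $C \subset X$, I apply the $k$ corner blowups of $\tormod$ in sequence. A local computation in standard blowup charts shows that if the blowup occurs at the node where $C$ is currently well-placed with multiplicities $(p',q')$ satisfying $p' > q' > 0$, the strict transform of $C$ is $(p'-q', q')$-well-placed at the new node on the more tangent side; if $p' = q'$, the strict transform meets the exceptional divisor at a single smooth point with multiplicity $p'$ (away from both new nodes) and the Euclidean reduction terminates; and if the blowup is at a different node than $C$'s current one, $C$'s local well-placement data is unchanged. This is exactly the Euclidean algorithm on $(p,q)$ operating in parallel with the Farey-style subdivision encoded by $\tormod$. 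By induction, $C^{(k)} \subset X^{(k)} \cong V^\tor[\calS]$ meets $\calN^{(k)}$ at a single point, whose local intersection data expressed in the $\Z$-basis of $\MM$ given by the two adjacent primitive generators is precisely $\wp_X(p,q)$. Pushing down to $V^\tor$ via contraction of the $\E_i$'s produces a simple rational curve $C^* \subset V^\tor$ meeting $\Ddiv^\tor$ at one point: if $\wp_X(p,q)$ lies along a ray of $\Si$, $C^*$ meets the corresponding toric divisor at a smooth point with contact order $\gcd_\MM(\wp_X(p,q)) = \gcd(p,q)$; otherwise $C^*$ passes through a torus-fixed point, and the weighted blowup $V^\tor_{+\wp_X(p,q)} \to V^\tor$ separates its strict transform from that fixed point with the correct contact order on $\Ddiv_\out$. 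Taking the strict transform in $V^\tor_{+\wp_X(p,q)}[\calS]$ completes the forward map.

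The inverse map from (b) to (a) runs this sequence of birational transformations in reverse: contract $\Ddiv_\out$ (if weighted), contract the $\E_i$'s to reach $V^\tor$, then invert the $k$ corner blowups of $\tormod$ by contracting their exceptionals in reverse order (each is a $(-1)$-curve at the moment of contraction). The two maps are mutually inverse because every blowup and blowdown restricts to a bijection on the appropriate moduli of simple rational curves distinct from the exceptional loci, and simplicity and rationality are preserved throughout. The descent of $\wp_X$ to a bijection $\nicequot{\Z^2_{\geq 0}}{\sim} \to \MM$ follows because each consecutive pair of primitive generators in $\{\mu_\calN, \nu_1, \ldots, \nu_k\}$ (cyclically arranged in $\Si$) is a $\Z$-basis of $\MM$ by smoothness of the toric subdivision, so the subcones of $\Z^2_{\geq 0}$ map isomorphically onto cones tiling $\MM_\R$, with the two boundary rays collapsing together onto $\R_{\geq 0}\mu_\calN$. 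The identity $\gcd(p,q) = \gcd_\MM(\wp_X(p,q))$ is immediate from the $\Z$-linearity on each subcone. The main obstacle is the careful local analysis tracking how $(p,q)$-well-placedness transforms under each corner blowup, particularly the bookkeeping for the order-dependent sequence of blowups in $\tormod$ (which may interleave blowups at $C$'s current node with blowups elsewhere, so that $C$'s local data is unaffected while the ambient fan refinement grows); a secondary subtlety is verifying that the edge-blowup structure at $\calS$ is compatible, as $C^{(k)} \subset X^{(k)}$ may or may not intersect the $\E_i$'s.
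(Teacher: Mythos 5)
Your proposal takes essentially the same approach as the paper: you track the $(p,q)$ well-placement data through the sequence of corner blowups via a Euclidean-algorithm-style reduction (matching the paper's cases (i)--(iv)), transfer to the toric picture via $X^{(k)}\cong V^\tor[\calS]$, and separate the curve from the boundary with a final weighted blowup, with the inverse given by contracting in reverse order; the descent and gcd claims are established structurally, as in the paper.

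Two small points of caution, neither fatal. First, your assertion that each new ray of $\Si$ is ``inserted as the sum of its two neighbors'' is not literally correct as stated: the intermediate blowups $X^{(j)}\to X^{(j-1)}$ are not toric operations, and the naive sum-of-neighbors rule fails on the actual fan of $V^\tor$ (it only becomes correct in the developed/unrolled affine picture of the boundary cycle, where the $\calN$-ray appears with a monodromy twist). This does not undermine the Farey-style piecewise-linear description of $\wp_X$, which does agree with the paper's recursive definition, but the justification should not lean on that rule. Second, you push all the way down to $V^\tor$ and claim $C^*$ meets $\Ddiv^\tor$ in one point; this is false when $C^{(k)}$ meets some $\E_i$ (which is permitted), though you flag this issue at the end, and it is repaired when you take the strict transform back into the edge blowup. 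The paper avoids this detour by working directly with the commuting square involving $X^{(k)}_{+\wp_X(p,q)}\cong V^\tor_{+\wp_X(p,q)}[\calS]$, never contracting the $\E_i$.
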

\NI 
Here ``simple rational algebraic curve in $X$'' means the image $C$ of a holomorphic map $\CP^1 \ra X$ which does not factor through a holomorphic map $\CP^1 \ra \CP^1$ of degree $\geq 2$ (in particular $C$ is reduced and irreducible). 
Since such curves have a unique holomorphic parametrization up to biholomorphism, we will tend to view $C$ as both a map $\CP^1 \ra X$ and as a subvariety of $X$.
Note that simplicity in (a) and (b) is vacuous if $\gcd(p,q) = 1$.

\begin{rmk}
Note that the special cases with $p = 0$ or $q = 0$ are to be interpreted as in Convention~\ref{conv:pq=0}.
 Another noteworthy special case occurs when $\wp_X(p,q)$ is positively proportional to $-\mm_i$ for some $i \in \{1,\dots,J\}$, in which case the exceptional divisors $\E_1,\dots,\E_\ell$ themselves contribute to (b).
\end{rmk}

\sss

To begin, for each nonzero $\mm \in \MM$ not lying on any ray of $\Si$, let $\Si_{+\mm}$ denote the refinement of the fan $\Si$ obtained by adding the ray $\R_{\geq 0} \cdot \mm$ (and appropriately subdividing the cone containing it). 
We denote by $\Ddiv_\out$ the toric divisor in the associated (typically singular) toric surface $V^\tor_{+\mm} := V_{\Si_{+\mm}}$ corresponding to the new ray $\R_{\geq 0} \cdot \mm$.
Note that $V^\tor_{+\mm}$ is a weighted blowup of $V^\tor$, and there is an induced birational morphism $V^\tor_{+\mm} \ra V^\tor$.
On the other hand, if $\mm \in \MM$ is nonzero and lies on a ray of $\Si$, then we put simply $V^\tor_{+\mm} := V^\tor$ and let $\Ddiv_\out \subset V^\tor_{+\mm}$ denote the toric divisor corresponding to the ray $\R_{\geq 0} \cdot \mm$.

As in Definition~\ref{def:tor_mod}, the image of $\E_1 \cup \cdots \cup \E_\ell$ in $V^\tor$ is a finite set $\calS$ of smooth points in $\Ddiv^\tor$.
Let $V^\tor[\calS]$ denote the blowup of $V^\tor$ along $\calS$,
so that the birational morphism $X^{(k)} \ra V^\tor$ from Definition~\ref{def:tor_mod} lifts to an isomorphism $X^{(k)} \ra V^\tor[\calS]$.
Let $V^\tor_{+\mm}[\calS]$ denote the blowup of $V^\tor_{+\mm}$ along the preimage of $\calS$ under $V^\tor_{+\mm} \ra V^\tor$,
and let $X^{(k)}_{+\mm}$ be the corresponding weighted blowup of $X^{(k)}$ fitting into the following commutative diagram:
\[
\begin{tikzcd}
  X_{+\mm}^{(k)} \arrow[r,"\cong"] \arrow[d] &  V^\tor_{+\mm}[\calS] \arrow[d]\arrow[r] & V^\tor_{+\mm} \arrow[d]\\
  X^{(k)} \arrow[r,"\cong"] & V^\tor[\calS] \arrow[r] & V^\tor.
\end{tikzcd}
\]
Let $\Ddiv^\tor_{+\mm}$ denote the toric boundary divisor of $V^\tor_{+\mm}$, and let $\Ddiv_{+\mm}^\tor[\calS]$ denote its strict transform in $V^\tor_{+\mm}[\calS]$.

We now define $\wp_X(p,q)$ for $p,q \in \Z_{\geq 1}$ as follows.
Recall that as part of the data of the toric model $\tormod$ we have nodes $\pp^{(j)} \in X^{(j)}$ for $j=0,\dots,k-1$, where $X^{(j)}$ is the blowup of $X^{(j-1)}$ at $\pp^{(j-1)}$.
Let $\calB_-^{(0)} := \calB_-$ and $\calB_+^{(0)} := \calB_+$ denote the local branches of $\calN$ near its node $\pp^{(0)}$, and put $(p^{(0)},q^{(0)}) := (p,q)$.
Next, for $j=1,\dots,k$:
\begin{enumerate}[label=(\roman*)]
  \item if $\pp^{(j-1)} \neq \calB_-^{(j-1)} \cap \calB_+^{(j-1)}$, let $\calB^{(j)}_\pm$ denote the strict transform $\calB^{(j-1)}_\pm$ and put $(p^{(j)},q^{(j)}) := (p^{(j-1)},q^{(j-1)})$

  \item if $\pp^{(j-1)} = \calB_-^{(j-1)} \cap \calB_+^{(j-1)}$ and $p^{(j-1)} > q^{(j-1)}$, let $\calB_-^{(j)}$ denote the strict transform of $\calB_-^{(j-1)}$ in $X^{(j)}$, let $\calB_+^{(j)}$ denote the exceptional divisor of $X^{(j)} \ra X^{(j-1)}$, and 
put $(p^{(j)},q^{(j)}) := (p^{(j-1)} - q^{(j-1)},q^{(j-1)})$

  \item if $\pp^{(j-1)} = \calB_-^{(j-1)} \cap \calB_+^{(j-1)}$ and $q^{(j-1)} > p^{(j-1)}$, let $\calB_+^{(j)}$ denote the strict transform of $\calB_+^{(j-1)}$ in $X^{(j)}$, let $\calB_-^{(j)}$ denote the exceptional divisor of $X^{(j)} \ra X^{(j-1)}$, and
    put $(p^{(j)},q^{(j)}) := (p^{(j-1)},q^{(j-1)} - p^{(j-1)})$.

  \item otherwise, if $\pp^{(j-1)} = \calB_-^{(j-1)} \cap \calB_+^{(j-1)}$ and $p^{(j-1)} = q^{(j-1)}$, let both $\calB_-^{(j)}$ and $\calB_+^{(j)}$ denote the exceptional divisor of $X^{(j)} \ra X^{(j-1)}$, and put $(p^{(j)},q^{(j)}) := (p^{(j-1)},p^{(j-1)})$ (in this case every subsequent case is necessarily of type (i)).

\end{enumerate}

Let $\mm_-,\mm_+ \in \MM$ denote the primitive ray generators in $\Si$ corresponding to the toric boundary divisors of $V^\tor$ 
containing the images of $\calB_-^{(k)},\calB_+^{(k)}$ respectively under the composition $X^{(k)} \ra V^\tor[\calS] \ra V^\tor$.
Note that $\mm_- = \mm_+$ if and only if case (iv) above occurs at some step, and this can only hold for finitely many initial values of $p/q$.

\begin{definition}\label{def:W}
 For $p,q \in \Z_{\geq 1}$, put
\begin{align*}
\wp_X(p,q) := 
\begin{cases}
p^{(k)} \cdot \mm_- + q^{(k)} \cdot \mm_+ &\text{if}\;\; \mm_- \neq \mm_+ \\
p^{(k)} \cdot \mm_- & \text{if}\;\;\mm_- = \mm_+.
\end{cases}
\end{align*}
We also put $\wp_X(0,0) := (0,0)$, and $\wp(j,0) := \wp(0,j) := j\mmsubN$ for $j \in \Z_{\geq 1}$, where $\mmsubN \in \MM$ is the primitive ray generator of $\Si$ corresponding to the image of $\calN^{(k)}$ under the composition $X^{(k)} \ra V^\tor[\calS] \ra V^\tor$.

\end{definition}

\begin{proof}[Proof of Proposition~\ref{prop:fund_bij}]
 
Suppose first that $p,q \geq 1$ and $\mm_- \neq \mm_+$. 
Given a curve in $X$ which is $(p,q)$-well-placed with respect to $(\calN;\calB_-,\calB_+)$, a straightforward induction shows that its strict transform in $X^{(k)}$ is $(p^{(k)},q^{(k)})$-well-placed with respect to $(\calN^{(k)};\calB_-^{(k)},\calB_+^{(k)})$ (see \S\ref{subsec:rigid_dP_tor_mods} for explicit examples).
Conversely, if a curve in $X^{(k)}$ is $(p^{(k)},q^{(k)})$-well-placed with respect to $(\calN^{(k)};\calB_-^{(k)},\calB_+^{(k)})$, then its image under $X^{(k)} \ra X$ is $(p,q)$-well-placed with respect to $(\calN;\calB_-,\calB_+)$.
Furthermore, by construction the birational morphism $X^{(k)}_{+\wp_X(p,q)} \ra X^{(k)}$ together with the isomorphism $X^{(k)}_{+\wp_X(p,q)} \cong V^\tor_{+\wp_X(p,q)}[\calS]$ give a bijection between $(p^{(k)},q^{(k)})$-well-placed curves in $X^{(k)}$ and curves in $V^{\tor}_{+\wp_X(p,q)}[\calS]$ which intersect $\Ddiv_\out$ in one point with contact order $\gcd(p^{(k)},q^{(k)}) = \gcd(p,q)$ and are otherwise disjoint from $\Ddiv_{+\wp_X(p,q)}^\tor[\calS]$.

Now suppose that $p,q \geq 1$ and $\mm_- = \mm_+$. Similar reasoning shows that a curve is $(p,q)$-well-placed with respect to $(\calN;\calB_-,\calB_+)$ if and only if its strict transform in $X^{(k)}$ intersects $\calB_-^{(k)} = \calB_+^{(k)}$ in one point with contact order $\gcd(p,q)$ and is otherwise disjoint from $\calN^{(k)}$.
Under the isomorphism $X^{(k)} \cong V^\tor[\calS]$,
this corresponds to a curve in $V^\tor_{+\wp_X(p,q)}[\calS] = V^\tor[\calS]$ which intersects $\Ddiv_\out$ in one point with contact order $\gcd(p,q)$ and is otherwise disjoint from $\Ddiv^\tor_{+\wp_X(p,q)}[\calS]$.

Similarly, in the cases $(p,q) = (j,0)$ and $(p,q) = (0,j)$ for some $j \in \Z_{\geq 1}$, a curve in $X$ is $(p,q)$-well-placed if and only if its strict transform in $X^{(k)}$ intersects $\calN^{(k)}$ in one point smooth point of $\calN^{(k)}$ with contact order $j$ and is otherwise disjoint from $\calN^{(k)}$. As above, 
under the isomorphism $X^{(k)} \cong V^\tor[\calS]$ this corresponds to a curve in $V^\tor_{+\wp_X(p,q)}[\calS] = V^\tor[\calS]$ which intersects $\Ddiv_\out$ in one point with contact order $j$ and is otherwise disjoint from $\Ddiv^\tor_{+\wp_X(p,q)}[\calS]$.

At the same time, if for some nonzero $\mm \in \MM$ we are given a curve $C$ in $V^\tor_{+\mm}[\calS]$ which intersects $\Ddiv_\out$ in one point with contact order $\gcd_\MM(\mm)$ and is otherwise disjoint from $\Ddiv^\tor_{+\mm}[\calS]$, then its image $C'$ under the composition $V^\tor_{+\mm}[\calS] \cong X^{(k)}_{+\mm} \ra X$ is $(p',q')$-well-placed with respect to $\calN$ for some $p',q' \in \Z_{\geq 0}$ not both zero. 
Note that $V^\tor_{+\mm}[\calS] \ra X$ contracts every component of $\Ddiv^\tor_{+\mm}[\calS]$ except for the one corresponding to the ray $\R_{\geq 0} \cdot \mmsubN$ to the node of $\calN$. 
Also, if $C$ is merely a germ of a curve in $V^\tor_{+\mm}[\calS]$ which intersects $\Ddiv_\out$ in one point, then we can view $C'$ as a curve germ in $X$ which is $(p',q')$ well-placed with respect to $\calN$. 
Therefore, if $\mm$ is positively proportional to $\mmsubN$, then $C' \cap \calN$ is a smooth point of $\calN$, whence $p' = 0$ or $q' = 0$, and otherwise we have $p',q' \geq 1$. 
Thus the association $\mm \mapsto (p',q')$ gives a well-defined map $\Z^2 \ra \nicequot{\Z^2_{\geq 0}}{\sim}$ which is inverse to $\wp_X(p,q)$ as a map from $\nicequot{\Z^2_{\geq 0}}{\sim}$ to $\MM^2$.
Lastly, it is easy to check that each step of $\wp_X$ preserves the greatest common divisor of its inputs, which implies the equality $\gcd(p,q) = \gcd_\MM(\wp_X(p,q))$.

\end{proof}

\begin{rmk}\label{rmk:Si_red_and_smth_vers}\hfill
\begin{enumerate}
  \item 
 Let $\Si_\op{red}$ be any complete
 fan in $\MM_\R$ which contains the rays $\R_{\geq 0} \cdot (-\mm_i)$ for $i=1,\dots,J$,
and let $V^\tor_{+\mm,\op{red}}[\calS]$ be defined in the same way as $V^\tor_{+\mm}[\calS]$ but using $\Si_\op{red}$ instead of $\Si$.
Then Proposition~\ref{prop:fund_bij} still holds if we replace $V^\tor_{+\wp_X(p,q)}[\calS]$ with $V^\tor_{+\wp_X(p,q),\op{red}}[\calS]$. 

\item
Proposition~\ref{prop:fund_bij} also remains true if we replace $V^\tor_{+\mm}[\calS]$ by its nonsingular resolution given by refining the fan $\Si_{+\mm}$ in a minimal way so as to resolve the singularities introduced adding the ray $\R_{\geq 0} \cdot \mm$. Thus we could work with only standard blowups instead of weighted ones, at the cost of more blowups (and more notation).
\end{enumerate}
\end{rmk}

\subsection{Explicit toric models for rigid del Pezzo surfaces}\label{subsec:rigid_dP_tor_mods}

Let $X$ be a rigid del Pezzo surface, i.e. $X$ is diffeomorphic to $\bl^j\CP^2 = \CP^2 \#^{\times j}\ovl{\CP}^2$ for some $j \in \{0,\dots,4\}$ or $\CP^1 \times \CP^1$ and is equipped with its unique Fano complex structure (up to biholomorphism).
In the following, given any uninodal anticanonical divisor $\calN \subset X$, we will give a prefered toric model $\tormod_X$ for the Looijenga pair $(X,\calN)$ and explicitly describe the corresponding function $\wp_X: \Z^2_{\geq 1} \ra \Z^2$ appearing in Proposition~\ref{prop:fund_bij}.
The toric model $\tormod_X$ turns out to be essentially independent of the choice of $\calN$ (up to deforming the locations of the edge blowup points $\calS \subset D^\tor$), so we often suppress $\calN$ from the notation and speak simply of the ``toric model associated to $X$''.

We begin by explicitly describing the bijection in Proposition~\ref{prop:fund_bij} in the case of the complex projective plane.
Figure~\ref{fig:tor_mod_CP2} illustrates the toric model $\tormod_{\CP^2}$, with $k=3$, $\ell=2$, and $V^\tor \cong F_3$ (the third Hirzebruch surface).
For $j = 0,1,2,3$ the solid curves represent the components of $\Ndiv^{(j)}$, while the dashed lines represent those curves which become $\E_1,\E_2$ in $X^{(3)}$. The labels give self-intersection numbers and homology classes, and the dot represents the node $\pp^{(j)} \in \Ndiv^{(j)}$ at which we blow up.

  \begin{figure}
  \centering
  \includegraphics[scale=1]{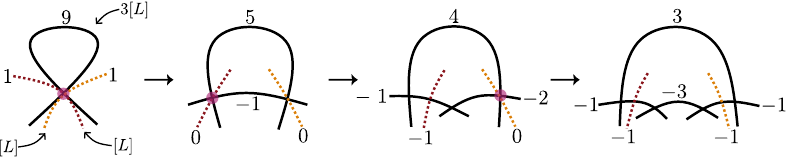}
  \caption{A toric model for $\CP^2$, with its (essentially unique) uninodal anticanonical divisor $\calN_{\CP^2}$. }
  \label{fig:tor_mod_CP2}
  \end{figure}

In more detail, let $C$ be a curve in $\CP^2$ which is $(p,q)$-well-placed with respect to $\calN$, with $p,q \in \Z_{\geq 1}$.
We denote this situation using the diagram
\begin{align*}
\calN \lineseg{p,q} \calN, 
\end{align*}
where the line segment represents the node $\pp^{(0)}$ of $\calN$.
Assume for the moment that $p/q \notin \{2,1,1/2\}$.
After blowing up at $\pp^{(0)}$ and denoting the resulting exceptional curve by $\F_1^{(1)}$, the diagram becomes:
  \begin{align*}
    \calN^{(1)} \lineseg{p-q,q} \F_1^{(1)} \lineseg{}\calN^{(1)}\;\;\;&\text{if}\;\;\; p > q\\
   \calN^{(1)} \lineseg{} \F_1^{(1)} \lineseg{p,q-p}\calN^{(1)} \;\;\; &\text{if}\;\;\;p < q.
  \end{align*}
Continuing by blowing up at the node $\pp^{(1)} \in \Ndiv$, we have:
\begin{align*}
   \calN^{(2)} \lineseg{p-2q,q} \F_2^{(2)} \lineseg{} \F_1^{(2)} \lineseg{} \calN^{(2)}  \;\;\;&\text{if}\;\;\; p > 2q \\
   \calN^{(2)} \lineseg{} \F_2^{(2)} \lineseg{p-q,2q-p} \F_1^{(2)} \lineseg{} \calN^{(2)} \;\;\;&\text{if}\;\;\;q < p < 2q \\ 
   \calN^{(2)} \lineseg{} \F_2^{(2)} \lineseg{} \F_1^{(2)} \lineseg{p,q-p} \calN^{(2)}\;\;\;&\text{if}\;\;\; p < q,
\end{align*}
where $\F_2^{(2)}$ is the new exceptional divisor and $\F_1^{(2)}$ is the strict transform of $\F_1^{(1)}$.
Finally, we blow up at the node $\pp^{(2)} \in \Ndiv^{(2)}$ to arrive at:
\begin{align*}
 \calN^{(3)} \lineseg{p-2q,q} \F_2^{(3)} \lineseg{} \F_1^{(3)} \lineseg{} \F_3^{(3)} \lineseg{} \calN^{(3)}   \;\;\;&\text{if}\;\;\; p > 2q \\
\calN^{(3)} \lineseg{} \F_2^{(3)} \lineseg{p-q,2q-p} \F_1^{(3)} \lineseg{} \F_3^{(3)} \lineseg{} \calN^{(3)} \;\;\;&\text{if}\;\;\; q < p < 2q  \\
\calN^{(3)} \lineseg{} \F_2^{(3)} \lineseg{} \F_1^{(3)} \lineseg{2p-q,q-p} \F_3^{(3)} \lineseg{} \calN^{(3)} \;\;\;&\text{if}\;\;\;  p < q < 2p \\
\calN^{(3)} \lineseg{} \F_2^{(3)} \lineseg{} \F_1^{(3)} \lineseg{} \F_3^{(3)} \lineseg{p,q-2p} \calN^{(3)} \;\;\;&\text{if}\;\;\; q > 2p.
\end{align*}

One readily checks using the classification of minimal rational surfaces that after blowing down the $(-1)$-curves $\E_1,\E_2 \subset X^{(3)}$ the result is isomorphic to $F_3$. 
Let $\Si$ be the fan in $\R^2$ for $F_3$ with ray generators $(0,-1),(1,3),(0,1),(-1,0)$, corresponding to $\calN,\F^{(3)}_2,\F^{(3)}_1,\F^{(3)}_3$ respectively.\footnote{Note that these are the outer normal vectors to the corresponding moment polygon with vertices $(0,0),(4,0),(1,1),(0,1)$. Taking instead the inner normal vectors would give a different but abstractly isomorphic fan.}
Note that we have (up to reordering) $\ell_1 = 1$, $\ell_2 = 1$, $\mm_1 = (1,0)$ and $\mm_2 = (-1,-3)$.
Then, according to Definition~\ref{def:W}, we have:
\begin{align}\label{eq:fundbijCP2}
\wp_{\CP^2}(p,q) = 
\begin{cases}
  (p-2q)(0,-1) + q(1,3) = (q,5q-p) & \text{if}\;\;\; p > 2q \\
  (p-q)(1,3) + (2q-p)(0,1) = (p-q,2p-q) & \text{if}\;\;\; q < p < 2q \\
  (2p-q)(0,1) + (q-p)(-1,0) = (p-q,2p-q) & \text{if}\;\;\; p < q < 2p \\
  p(-1,0) + (q-2p)(0,-1) = (-p,2p-q) & \text{if}\;\;\; q > 2p,
\end{cases},
\end{align}
where the two middle cases coalesce.
In the borderline cases $p/q = 2,1,1/2$, the strict transform of $C$ in $X^{(3)}$ intersects one of $\F_2^{(3)},\F_1^{(3)},\F_3^{(3)}$ in one point with contact order $\gcd(p,q)$ and is otherwise disjoint from the total transform of $\calN$, and one can check that the above formula for $\wp_{\CP^2}(p,q)$ is also valid in these cases.
Similarly, since $\mmsubN = (0,-1)$, in the cases $(p,q) = (j,0)$ or $(p,q) = (0,j)$ with $j \in \Z_{\geq 0}$ we have $\wp_{\CP^2}(p,q) = (0,-j)$.
Note that $p+q$ is divisible by $3$ if and only if $\wp_{\CP^2}(p,q)$ lies in $\Z_{\geq 0} \cdot \mm_1 + \Z_{\geq 0} \cdot \mm_2$.

\begin{rmk}
It turns out that there are no $(p,q)$-well-placed rational curves in $\CP^2$ for $p/q$ lying in $(1/5,1/2) \cup (1/2,2) \cup (2,5)$, so in particular both sides of the bijection in Proposition~\ref{prop:fund_bij} must be empty in this region.
Thus apart from the cases $p/q = 1/2,2$, which correspond under $\wp_{\CP^2}(p,q)$ to $\R_{\leq 0} \cdot \mm_1$ and $\R_{\leq 0} \cdot \mm_2$ respectively, 
 $\wp_{\CP^2}(p,q)$ lies in the cone generated by $\mm_1 = (1,0)$ and $\mm_2 = (-1,-3)$ whenever the bijection is nonvacuous. 
Note that the cases $p=0$ or $q=0$ correspond under $\wp_{\CP^2}$ to $\R_{\geq 0} \cdot (0,-1)$, which could be viewed as the ``center'' of $\cone(\mm_1,\mm_2)$. 
 We will see that the corresponding scattering diagram defined in \S\ref{sec:from_wp_to_sd_and_back} precisely matches this structure (c.f. Remark~\ref{rmk:scat_rays_lie_in_cone}).
\end{rmk}

The analogous computations for the remaining five rigid del Pezzo surfaces are similar. The toric models are illustrated in Figures ~\ref{fig:tor_mod_CP1xCP1},\ref{fig:tor_mod_Bl1},\ref{fig:tor_mod_Bl2},\ref{fig:tor_mod_Bl3},\ref{fig:tor_mod_Bl4}. The resulting bijections as in Proposition~\ref{prop:fund_bij} are summarized in Table~\ref{table:fund_bij}.

  \begin{figure}
  \centering    
  \includegraphics[scale=1]{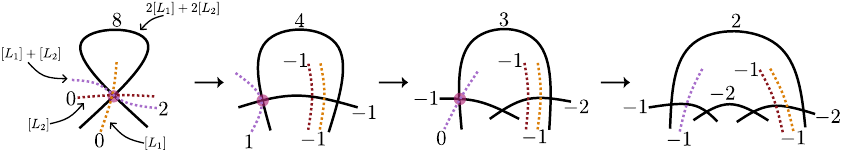}
  \caption{A toric model for $\CP^1 \times \CP^1$.} \label{fig:tor_mod_CP1xCP1}
  \end{figure}

    \begin{figure}
  \centering    
  \includegraphics[scale=1]{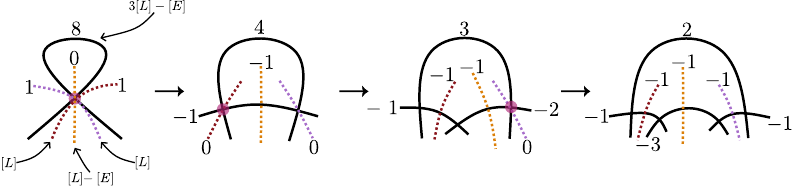}
  \caption{A toric model for $\bl^1\CP^2$.} \label{fig:tor_mod_Bl1}  
  \end{figure}

    \begin{figure}
  \centering    
  \includegraphics[scale=1]{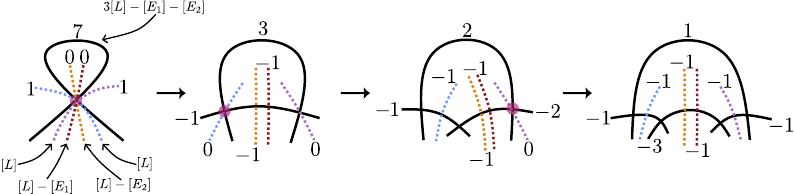}
  \caption{A toric model for $\bl^2\CP^2$.}  \label{fig:tor_mod_Bl2}
  \end{figure}

    \begin{figure}
  \centering    
  \includegraphics[scale=1]{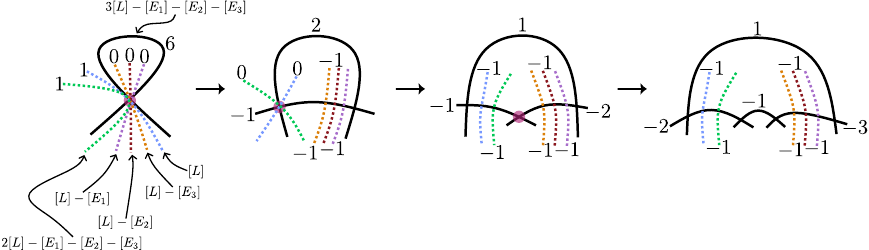}
  \caption{A toric model for $\bl^3\CP^2$.}  \label{fig:tor_mod_Bl3}
  \end{figure}

    \begin{figure}
  \centering    
  \includegraphics[scale=1]{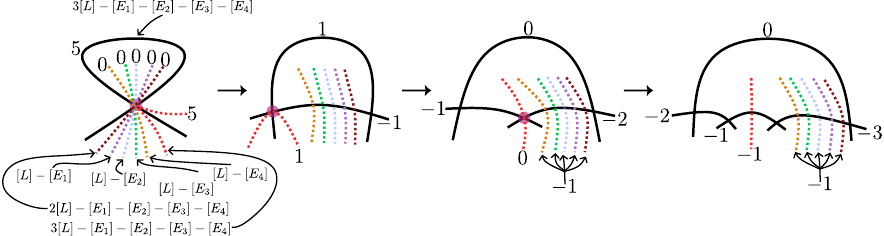}
  \caption{A toric model for $\bl^4\CP^2$.}  \label{fig:tor_mod_Bl4}
  \end{figure}

\begin{rmk}\label{rmk:atfs_and_tms}
Although we defined toric models in Definition~\ref{def:tor_mod} in terms of algebraic geometry, they turn out to be closely connected with the symplectic notion of almost toric fibrations (see e.g. \cite{symington71four,evans2023lectures,mcduff2024singular}). In fact, the toric models described above naturally correspond to the minimal almost toric fibrations pictured in Figure~\ref{fig:rigid_dP_ATFs}, which in turn control the infinite symplectic staircases in \cite{cristofaro2020infinite,casals2022full,mcduff2024singular} (and also e.g. exotic Lagrangian tori \cite{vianna2017infinitely}).
Comparing with Table~\ref{table:fund_bij}, we see that $J$ coincides with the number of nodal rays (i.e. eigenlines of the affine monodromy -- see e.g. \cite[\S4.1c]{mcduff2024singular}), with $\ell_1,\dots,\ell_J$ their multiplicities, and moreover $\mm_1,\dots,\mm_J$ are parallel to eigendirections up to a global $\op{GL}_2(\Z)$-transformation.
We will elaborate on the relationship between almost toric fibrations and toric models in a followup paper.
\end{rmk}
  \begin{figure}
\centering    
\includegraphics[scale=1]{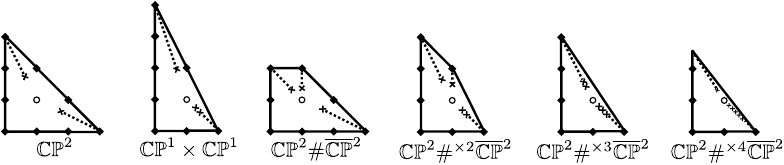}
\caption{Almost toric fibrations for the unimonotone rigid del Pezzo surfaces.}
\label{fig:rigid_dP_ATFs}  
\end{figure}

\begin{table} \caption{Toric models and the corresponding fundamental bijection for rigid del Pezzo surfaces.}\label{table:fund_bij} 

\scalebox{1}
{

\begin{tabular}{|c||c|c|c|c|c|c|c|}\hline 
$X$ & $V^\tor$
&   $\mm_1,\dots,\mm_J$ & $\wp_X(p,q)$ & $\ell_1,\dots,\ell_J$  \\
\hline\hline

$\CP^2$   & $F_3$   & $(1,0),(-1,-3)$ & $\begin{array}{ll} (q,5q-p) & \text{if}\;\;p/q \geq 2 \\ (p-q,2p-q) & \text{if}\;\;1/2 \leq p/q \leq 2 \\ (-p,2p-q) &  \text{if}\;\;p/q \leq 1/2\end{array}$  & 1,1 

 \\

\hline

$\CP^1 \times \CP^1$ &$F_2$  & $(-1,-2),(1,0)$ &  $\begin{array}{ll} (q,5q-p) & \text{if}\;\;p/q > 3 \\
(p-2q,p-q) & \text{if}\;\; 1 \leq p/q \leq 3 \\
(-p,p-q) & \text{if}\;\; p/q \leq 1
\end{array}$ & 1,2 
\\ 

\hline

$\CP^2 \#\ovl{\CP}^2$  & $F_2$ & $(1,0),(0,-1),(-1,-2)$ &
$\begin{array}{ll}
  (q,4q-p) & \text{if}\;\; p/q \geq2\\
  (p-q,p) & \text{if}\;\; 1 \leq p/q \leq 2\\
  (p-q,2p-q) & \text{if}\;\; 1/2 \leq p/q \leq 1\\
  (-p,2p-q) & \text{if}\;\; p/q \leq 1/2\\       
\end{array}$
& $1,1,1$  \\

 \hline

$\CP^2 \#^{\times 2}\ovl{\CP}^2$  & $F_1$ & $(1,0),(-1,-1),(0,-1)$ &
$\begin{array}{ll}
  (q,3q-p) & \text{if}\;\; p/q \geq 2\\
  (p-q,q) & \text{if}\;\; 1 < p/q \leq 2\\
  (p-q,2p-q) & \text{if}\;\; 1/2 \leq p/q \leq 1\\
  (-p,2p-q) & \text{if}\;\; p/q \leq 1/2\\       
\end{array}$

& $1,1,2$\\ 

\hline

$\CP^2 \#^{\times 3}\ovl{\CP}^2$  & $F_1$  & $(-1,-1),(1,0)$ &
$\begin{array}{ll}
  (q,3q-p) & \text{if}\;\; p/q \geq2\\
  (2p-3q,p-q) & \text{if}\;\; 1 \leq  p/q \leq 2\\
  (-p,p-q) & \text{if}\;\; p/q \leq 1\\
\end{array}$
& $2,3$\\ 

\hline

$\CP^2 \#^{\times 4}\ovl{\CP}^2$  & $F_2$ & $(1,0),(0,1)$ &
$\begin{array}{ll}
(p-2q,2p-3q) & \text{if}\;\; p/q \geq 2\\
(p-2q,2p-3q) & \text{if}\;\; 3/2 \leq p/q \leq 2\\
(q-p,2p-3q) & \text{if}\;\; 1 \leq p/q \leq 3/2\\
(q-p,2q-3p ) & \text{if}\;\; p/q \leq 1\\
\end{array}$
& $1,5$ \\ \hline
\end{tabular}

}
\end{table}

\section{From well-placed curves to scattering diagrams and back}\label{sec:from_wp_to_sd_and_back}

We now explain how to detect well-placed curves using scattering diagrams. Namely, given a uninodal Looijenga pair and a chosen toric model, Theorem~\ref{thm:wp_and_sd} states that there exists a $(p,q)$-well-placed curve if and only if a certain term in an associated scattering diagram is nonzero.
The main ingredient is the connection between scattering diagrams and certain Gromov--Witten invariants discovered in \cite{gross2010tropical}.

\subsection{Review of scattering diagrams}\label{subsec:scattering_review}

Let $\MM$ be a rank two lattice, put $\MM_\R := \MM \otimes_\Z \R$, and let $\C[\MM]$ denote the corresponding group algebra. 
As usual we will let $\NN$ denote the lattice dual to $\MM$.
For $\mm \in \MM$, we denote the corresponding monomial in $\C[\MM]$ by $z^\mm$.
Note that a choice of basis identifies $\MM$ with $\Z^2$ and $\C[\MM]$ with the algebra of Laurent polynomials $\C[x,x^{-1},y,y^{-1}]$, where $z^{(i,j)} = x^iy^j$.

In this paper, a \hl{wall} is a labeled ray $(\frakd,\ff)$, where:
\begin{itemize}
  \item $\frakd$ is an oriented ray in $\MM_\R$ with endpoint at the origin\footnote{We note that this is a specialization of a more general definition which does not require the ray endpoints to lie at the origin.} 
  \item $\ff \in \C[z^\mm]\ll t \rr \subset \C[\MM]\ll t \rr$ satisfies $\ff \equiv 1$ modulo $z^\mm t$, where $\mm \in \MM$ is the unique primitive element which spans the tangent space to $\frakd$ in the direction of its orientation.  
\end{itemize}

\NI We call a wall \hl{incoming} (resp. \hl{outgoing}) if its ray is oriented towards (resp. away from) the origin. 
A \hl{scattering diagram} $\calD$ in $\MM_\R$ is a multiset of walls in $\MM_\R$ such that for each $k \in \Z_{\geq 1}$ all but finitely many function labels are congruent to $1$ modulo $t^k$.
We note that the lattice $\MM \subset \MM_\R$ is an important implicit part of the data, since as we explain below its dual $\NN$ determines the wall crossing monodromy.

Let $(\frakd,\ff) \in \calD$ be a wall in $\MM_\R$, and consider a smooth path $[0,1] \ra \MM_\R$ which has a transverse intersection with $\frakd$ at some time $t_0 \in (0,1)$.
We have the associated \hl{wall-crossing monodromy} $\theta_{\ga,t_0}^{(\frakd,\ff),\calD} \in \aut_{\C\ll t \rr}(\C[\MM]\ll t \rr)$, which is defined on monomials by:
\begin{align}\label{eq:monod}
\theta_{\ga,t_0}^{(\frakd,\ff),\calD}: z^\mm \mapsto \ff^{\lan \nn,\mm\ran} z^\mm,
\end{align}
where $\nn \in \NN$ is the unique primitive element which vanishes on the tangent space to $\frakd$ and 
satisfies $\lan \nn,\ga'(0)\ran > 0$.

Now let $\ga: [0,1] \ra \MM_\R \setminus \{\vec{0}\}$ be a smooth immersion which intersects each wall of $\calD$ (or rather the corresponding ray) transversely.
Let $\theta_{\ga}^{\calD} \in \aut_{\C\ll t \rr}(\C[\MM]\ll t \rr)$ be given by composing the wall-crossing monodromies in order over every intersection point of $\ga$ with a wall of $\calD$.
Note that this is typically an infinite composition, but it is nevertheless well-defined since for any given $k \in \Z_{\geq 1}$ there are only finitely many nontrivial terms modulo $t^k$.
Two scattering diagrams $\calD,\calD'$ in $\MM_\R$ are \hl{equivalent} if $\theta_{\ga}^{\calD} = \theta_{\ga}^{\calD'}$ for any smooth immersion $\ga: [0,1] \ra \MM_\R \setminus \{\vec{0}\}$ which intersects the walls of both $\calD$ and $\calD'$ transversely.
Every scattering diagram $\calD$ is equivalent to a unique one $\calD_\min$ which is \hl{minimal}, i.e. no oriented rays are repeated and no label $\ff$ satisfies $\ff = 1$.
Indeed, we simply remove any wall $(\frakd,\ff)$ with $\ff=1$, and we replace any two walls $(\frakd_1,\ff_1),(\frakd_2,\ff_2)$ with the same oriented ray $\frakd_1 = \frakd_1$ with the wall $(\frakd_1,\ff_1 \ff_2)$. 

When $\ga$ is a loop, i.e. $\ga(0) = \ga(1)$, we refer to $\theta_{\ga}^{\calD}$ as the \hl{total monodromy} of $\calD$, noting that this is well-defined up conjugation (i.e. changing the starting point of $\ga$) and inversion (i.e. changing the orientation of $\ga$).
We will say that the scattering diagram $\calD$ is \hl{consistent} if 
the total monodromy is the identity $\1 \in \aut_{\C\ll t \rr}(\C[\MM]\ll t \rr)$.

Given any scattering diagram $\calD$ as above, Kontsevich--Soibelman \cite{Kontsevich2006} showed that there exists another scattering diagram $\ks(\calD)$, obtained by adding (typically infinitely many) outgoing walls to $\calD$, such that $\ks(\calD)$ is consistent. Moreover, $\ks(\calD)$ is unique up to equivalence, and hence its minimal representative $\ks(\calD)_\min$ is unique on the nose.
Concretely, $\ks(\calD)$ can be constructed algorithmically from $\calD$ by adding successive walls in order to kill the total monodromy order-by-order in $t$ (see e.g. \cite[Thm. 1.4]{gross2010tropical}).

\sss

Scattering diagrams of the following form, which we call \hl{basic},\footnote{A closely related notion of ``standard scattering diagrams'' appears in \cite[Def. 1.10]{gross2010tropical}, but due to a slightly different usage we use a different term here.} will play a distinguished role in the sequel. 

\begin{definition}\label{def:basic_sd}
 Given primitive vectors $\mm_1,\dots,\mm_J \in \MM: = \Z^2$ and positive integers $\ell_1,\dots,\ell_J \in \Z_{\geq 1}$ for some $J \in \Z_{\geq 2}$, let $\calD_{\mm_1,\dots,\mm_J}^{\ell_1,\dots,\ell_J}$ denote the scattering diagram in $\R^2$ with $J$ incoming walls given by
\begin{align*}
\calD_{\mm_1,\dots,\mm_J}^{\ell_1,\dots,\ell_J} := \left\{\big(\R_{\leq 0} \cdot \mm_1,(1+tz^{\mm_1})^{\ell_1}\big),\dots,\big(\R_{\leq 0} \cdot \mm_J,(1+tz^{\mm_J})^{\ell_J }\big)\right\}.
\end{align*}
\end{definition} 
\begin{example}
For each $\ell_1,\ell_2 \in \Z_{\geq 1}$, $\ks(\calD_{e_1,e_2}^{\ell_1,\ell_2})_\min$ consists of the incoming walls $\big(\R_{\leq 0} \cdot e_1,(1+tx)^{\ell_1}\big),\big(\R_{\leq 0} \cdot e_2,(1+ty)^{\ell_2}\big)$, the outgoing walls $\big(\R_{\geq 0} \cdot e_1,(1+tx)^{\ell_1}\big),\big(\R_{\geq 0} \cdot e_2,(1+ty)^{\ell_2}\big)$, and various additional outgoing walls of the form $\big(\R_{\geq 0} \cdot (a,b),\ff_{a,b}\big)$ for primitive $(a,b) \in \Z_{>0}^2$ and $\ff_{a,b} \in \C[z^{(a,b)}]\ll t\rr$.

For instance, for $\ell_1 = \ell_2 = 1$, $\ks(\calD_{e_1,e_2}^{1,1})_\min \setminus \calD_{e_1,e_2}^{1,1}$ consists of the single wall $\big(\R_{\geq 0} \cdot (1,1), 1+t^2xy\big)$.
For $\ell_1 = \ell_2 = 2$, the walls appearing in $\ks(\calD_{e_1,e_2}^{2,2})_\min \setminus \ks(\calD_{e_1,e_2}^{2,2})$ have slopes $(1,1),(k,k+1),(k+1,k)$ for all $k \in \Z_{\geq 1}$, with explicit function labels (see e.g. \cite[\S1.4]{gross2010quivers}).
For $\ell \geq 3$, the scattering diagram $\ks(\calD^{\ell,\ell}_{e_1,e_2})_\min$ is more complicated and involves a ``dense region'' where all rational slopes appear -- see \S\ref{subsec:wp_and_basic_sd} below.
\end{example}
\NI As we 
explain in \S\ref{subsec:change_lattice}, any basic scattering diagram with $J = 2$ can be reduced to one of the form $\calD_{e_1,e_2}^{\ell_1,\ell_2}$ for some $\ell_1,\ell_2 \in \Z_{\geq 1}$.

\subsection{Well-placed curves and Gromov--Witten theory}\label{subsec:wp_and_GW}

\NI We will associate to any uninodal Looijenga pair $(X,\calN)$ with a toric model $\tormod$ the scattering diagram 
\begin{align}\label{eq:D_tormod}
\calD_{\tormod} := \calD_{\mm_1,\dots,\mm_J}^{\ell_1,\dots,\ell_J},
\end{align}
 where $\mm_1,\dots,\mm_J \in \Z^2$ and $\ell_1,\dots,\ell_J \in \Z_{\geq 1}$ are the numerical data associated with $\tormod$ as in Notation~\ref{not:tormod_m_and_l} (here for concreteness we fix an identification $\MM \cong \Z^2$). 
We will say that the toric model $\tormod$ is \hl{strongly convex} if the rational polyhedral cone 
\begin{align*}
\cone(\mm_1,\dots,\mm_J)  := \left\{\sum\limits_{i=1}^Jc_i \mm_i\;|\; c_1,\dots,c_J \in \R_{\geq0} \right\} \subset \R^2
\end{align*}
is strongly convex, i.e. $\cone(\mm_1,\dots,\mm_J)$ does not contain any line (or, equivalently, if $c_1\mm_1 + \cdots + c_J\mm_J = \vec{0}$ for some $c_1,\dots,c_J \in \R_{\geq 0}$ then we must have $c_1 = \cdots = c_J = 0$).
Note that the rigid del Pezzo toric models in \S\ref{subsec:rigid_dP_tor_mods} are all strongly convex (for $J=2$ this is automatic as long as $\mm_1$ and $\mm_2$ are not colinear).

\begin{example}
Toric models for uninodal Looijenga pairs need not be strongly convex.
For instance, let $X$ be the blowup of $\CP^2$ at a smooth point on the nodal cubic $\calN_0 = \{x^3 + y^3 = xyz\}$, and let $\calN$ be the strict transform of $\calN_0$ in $X$. Since $(X,\calN)$ and $(\CP^2,\calN_0)$ coincide near the nodes, we can follow the same sequence of $3$ blowups described in \S\ref{subsec:rigid_dP_tor_mods} in order to obtain a toric model for $(X,\calN)$ with $J = 3$, $\mm_1 = (1,0), \mm_2 = (-1,-3), \mm_3 = (0,1)$ and $\ell_1 = \ell_2 = \ell_3 = 1$, where $\cone(\mm_1,\mm_2,\mm_3) = \R^2$ is evidently not strongly convex.
\end{example}
\begin{rmk}\label{rmk:scat_rays_lie_in_cone}
Using e.g. the connection between scattering diagrams and tropical curve counts discussed in \cite[Thm 2.4]{gross2010tropical}, one can show that the ray of every outgoing wall in $\ks(\calD_{\mm_1,\dots,\mm_J}^{\ell_1,\dots,\ell_J})_\min$ must lie in the cone $\cone(\mm_1,\dots,\mm_J)$. 
\end{rmk}

The following result shows that the existence of a $(p,q)$-well-placed curve in $X$ is equivalent to the nonvanishing of a certain term in the scattering diagram $\ks(\calD_{\tormod})_\min$ obtained from $\calD_{\tormod}$ by applying the Kontsevich--Soibelman algorithm discussed in \S\ref{subsec:scattering_review}.
For each primitive $\mm \in \Z^2$, let $\ff_\mm^\out \in \C[z^\mm]\ll t \rr$ be the function attached to the outgoing ray $\R_{\geq 0} \cdot \mm$ in $\ks(\calD_{\tormod})_\min$ (or $1$ if there is no such wall), let $\ff_\mm^\inn$ denote the same quantity but for the incoming ray $\R_{\leq 0} \cdot (-\mm)$, and put 
\begin{align}\label{eq:ff_mm}
\ff_\mm := \ff_\mm^\out \cdot \ff_\mm^\inn.
\end{align}
Note that for basic scattering diagrams as in Definition~\ref{def:basic_sd} we have $\ff_\mm^\inn = 1$ unless $\mm = -\mm_i$ for some $i \in \{1,\dots,J\}$.
For $\ka \in \Z_{\geq 1}$, let 
\begin{align}\label{eq:coef}
\coef_{\ks(\calD_\tormod)_\min}(z^{\ka \mm}) \in \C\ll t \rr
\end{align}
 denote the coefficient of $z^{\ka \mm}$ in $\log \ff_\mm$. 
Let $\wp_X: \Z^2_{\geq 1} \ra \Z^2$ be the function associated with the toric model $\tormod$ as in Definition~\ref{def:W}.
The following theorem is proved after Corollary~\ref{cor:curves_count_pos}. 

\begin{thm}\label{thm:wp_and_sd}
Let $(X,\calN)$ be a uninodal Looijenga pair with a toric model $\tormod$ and associated scattering diagram $\calD_\tormod$ as in \eqref{eq:D_tormod}.
For each coprime $p,q \in \Z_{\geq 1}$, if the scattering coefficient $\coef_{\ks(\calD_\tormod)_\min}(z^{\wp_X(p,q)})$ is nonzero, then there exists a rational algebraic curve in $X$ which is $(p,q)$-well-placed with respect to $\calN$. The converse is also true provided that $\tormod$ is strongly convex.
\end{thm}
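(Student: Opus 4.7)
The plan is to combine two ingredients. First, Proposition~\ref{prop:fund_bij} bijects $(p,q)$-well-placed curves in $X$ with simple rational curves in $V^\tor_{+\wp_X(p,q)}[\calS]$ that meet the boundary divisor $\Ddiv^\tor_{+\wp_X(p,q)}[\calS]$ at a single simple point lying on $\Ddiv_\out$ (a simple intersection because $\gcd_\MM(\wp_X(p,q)) = \gcd(p,q) = 1$). Second, the tropical / log Gromov--Witten dictionary for scattering diagrams of Gross--Pandharipande--Siebert \cite{gross2010tropical} identifies the scattering coefficient $\coef_{\ks(\calD_\tormod)_\min}(z^{\wp_X(p,q)})$ with a log Gromov--Witten count of precisely this family of curves: the $\ell_i$ edge blowup points on the toric divisor associated to $-\mm_i$ play the role of the interior point conditions in \cite{gross2010tropical}, and the primitivity of $\wp_X(p,q)$ means that the scattering coefficient equals the $z^{\wp_X(p,q)}$-coefficient of $\ff_{\wp_X(p,q)}$ itself (there are no higher-order corrections from the logarithm at this order).

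The forward direction is then immediate: if the scattering coefficient is nonzero, then the log Gromov--Witten invariant above is nonzero, so the moduli space of tangent curves in $V^\tor_{+\wp_X(p,q)}[\calS]$ is nonempty, and applying the bijection of Proposition~\ref{prop:fund_bij} produces a $(p,q)$-well-placed curve in $X$.

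For the converse, strong convexity of $\tormod$ plays an essential positivity role. By Remark~\ref{rmk:scat_rays_lie_in_cone}, every outgoing ray in $\ks(\calD_\tormod)_\min$ lies in $\cone(\mm_1,\ldots,\mm_J)$, which is strongly convex by hypothesis. In this regime, the tropical correspondence of \cite[Thm.~2.4]{gross2010tropical} expresses $\coef_{\ks(\calD_\tormod)_\min}(z^{\wp_X(p,q)})$ as a positive integer linear combination of tropical curve multiplicities, equivalently as a sum of nonnegative log Gromov--Witten contributions indexed by admissible homology classes. Because the outgoing direction is fixed and lies inside a strongly convex cone, no cancellation can occur among these contributions. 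Thus every simple rational curve in $V^\tor_{+\wp_X(p,q)}[\calS]$ of the required type contributes strictly positively to the scattering coefficient, and so existence of such a tangent curve (equivalently, via Proposition~\ref{prop:fund_bij}, existence of a $(p,q)$-well-placed curve in $X$) forces the coefficient to be nonzero.

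The main technical obstacle I anticipate is that \cite{gross2010tropical} formulates its tropical / log Gromov--Witten correspondence for a single fixed toric surface with interior blowup points, whereas our setup additionally modifies the fan by inserting the ray $\R_{\geq 0} \cdot \wp_X(p,q)$, possibly as a weighted blowup. One must verify that this outgoing modification is precisely what converts an interior tangency count into the scattering coefficient at $\wp_X(p,q)$, and that the relevant log Gromov--Witten moduli space of $V^\tor_{+\wp_X(p,q)}[\calS]$ is proper; both points should follow from strong convexity, but require a careful compatibility argument with the toric compactifications used in \cite{gross2010tropical} (and possibly an appeal to the refined positive factorization results of \cite{GHKK2018} or \cite{grafnitz2023scattering} to sidestep properness issues directly).
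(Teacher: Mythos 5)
Your overall strategy --- chain Proposition~\ref{prop:fund_bij} with the Gross--Pandharipande--Siebert correspondence between scattering coefficients and relative Gromov--Witten counts --- is exactly the paper's approach, and the forward direction is essentially correct modulo one omission: you do not handle the case where the nonvanishing of $\coef_{\ks(\calD_\tormod)_\min}(z^{\wp_X(p,q)})$ comes from the \emph{incoming} factor $\ff^\inn_{\wp_X(p,q)}$, i.e.\ when $\wp_X(p,q) = -\mm_i$ for some $i$. In that case there is no outgoing log GW invariant at all; the paper instead observes that an exceptional divisor $\E_i^j$ of $V^\tor[\calS] \to V^\tor$ is itself the required curve, and feeds it through the bijection.

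The converse direction has a genuine gap. You assert that strong convexity prevents cancellation because ``every simple rational curve \dots contributes strictly positively to the scattering coefficient,'' but this is precisely the statement that needs proving, and convexity alone does not deliver it. The scattering coefficient is a sum $\sum_{\bPvec'} N_\mmvec[\bPvec']\, t^{\dots}$ of virtual counts, each a priori only in $\Q$. To conclude from nonemptiness of one moduli space $\calM_{\be_\bPvec}(Y_\mmvec[\bPvec]/\Ddiv_\out)$ that the corresponding $N_\mmvec[\bPvec]$ is strictly positive, and that no other term cancels it, the paper needs two specific ingredients you do not supply: (i) Lemma~\ref{lem:P_to_P_prime}, which uses strong convexity together with a Chow-group relation to show that no curve component can degenerate into the boundary, so the compactified relative moduli space coincides with the open one; and (ii) Lemma~\ref{lem:no_pos_dim_fams}, the holomorphic two-form / non-uniruledness argument showing there are no positive-dimensional families of rational curves meeting $\calN$ in a single point. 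Only with both in hand (Corollary~\ref{cor:curves_count_pos}) does one conclude that each curve counts as a positive integer, hence $N_\mmvec[\bPvec] \in \Z_{\geq 0}$ with strict positivity when nonempty. Your appeal to tropical multiplicities being positive gives $N_\mmvec[\bPvec] \geq 0$, but without ruling out excess-dimensional moduli (point (ii)) a single existing curve could still have vanishing virtual contribution, so nonemptiness would not force the coefficient to be nonzero. Finally, the appeal to \cite{GHKK2018} or \cite{grafnitz2023scattering} to ``sidestep properness'' is misplaced here --- properness is already handled by \cite[Prop.~5.1]{gross2010tropical}, and those positivity results enter the paper only later in \S\ref{sec:std_sd} to analyze the scattering diagrams themselves, not to prove the present equivalence.
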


  \begin{rmk}
In the case $\gcd(p,q) > 1$, it should in principle be possible to detect simple $(p,q)$-well-placed curves using the scattering diagram $\calD_\tormod$, by replacing $\coef_{\ks(\calD_\tormod)_\min}(z^{\wp_X(p,q)})$ with the corresponding BPS state counts defined in \cite[\S6]{gross2010tropical}, which subtract off multiple cover contributions.
These would typically correspond to curves in $X$ having a cusp with multiple Puiseux pairs (c.f. \cite[\S3.3]{mcduff2024singular}). 
  \end{rmk}

Before proving Theorem~\ref{thm:wp_and_sd}, we first recall the general connection between scattering diagrams and Gromov--Witten theory proved in \cite{gross2010tropical}, after introducing some necessary notation.
Suppose that for some $J \in \Z_{\geq 2}$ we are given:
\begin{itemize}
  \item pairwise distinct primitive integer vectors $\mm_1,\dots,\mm_J \in \Z^2$
  \item ordered partitions $\bP_1,\dots,\bP_J$,\footnote{By definition, an \hl{ordered partition} of length $\ell \in \Z_{\geq 1}$ is simply a tuple $\bP = (\rho^1,\dots,\rho^\ell)$ of nonnegative integers. We will denote the sum of its parts by $|\bP| = \rho^1 + \cdots + \rho^\ell$ and the length by $\len(\bP) = \ell$.} where $\bP_i = (\rho_i^1,\dots,\rho_i^{\ell_i})$  has length $\ell_i \in \Z_{\geq 1}$ for $i = 1,\dots,J$.
\end{itemize}
\NI 
Put $\mm_\out := |\bP_1| \mm_1 + \cdots + |\bP_J| \mm_J$. In the following, we will use the shorthand notation $\mmvec := (\mm_1,\dots,\mm_J)$ 
and $\bPvec := (\bP_1,\dots,\bP_J)$.
Following \cite{gross2010tropical}, we will define corresponding relative Gromov--Witten-type invariants $N_\mmvec[\bPvec] \in \Q$ which control the scattering coefficients in $\ks(\calD_{\mm_1,\dots,\mm_J}^{\ell_1,\dots,\ell_J})_\min$.

We will assume for ease of exposition that $\cone(-\mm_1,\dots,-\mm_J,\mm_\out) = \R^2$.\footnote{Note that the condition $\cone(-\mm_1,\dots,-\mm_J,\mm_\out) = \R^2$ ensures that $Y_\mmvec$ is a compact surface, and rules out e.g. $\mm_1 = (1,0), \mm_2 = (0,1), \mm_\out = (1,0)$. This condition can always be achieved by adding further rays to the fan, without essentially changing our curve counts of interest (at the cost of additional notation).}
 Let $Y_{\mmvec}$ denote the (typically singular) toric surface associated to the complete fan in $\R^2$ with ray generators $-\mm_1,\dots,-\mm_J,\mm_\out$, and let $Y^o_{\mmvec}$ be the result after removing all of the toric fixed points (i.e. $0$-dimensional orbits) from $Y_\mmvec$.
Here by slight abuse of notation $Y_\mmvec$ and $Y_\mmvec^o$ implicitly depend on $\bPvec$ (via $\mm_\out$). 
Let $\Ddiv_1,\dots,\Ddiv_J,\Ddiv_\out$ denote the toric divisors in $Y_\mmvec$ associated to $-\mm_1,\dots,-\mm_J,\mm_\out$ respectively, and let $\Ddiv_1^o,\dots,\Ddiv_J^o,\Ddiv_\out^o$ denote their respective restrictions to $Y_\mmvec^o$.
Note that in principle we allow the degenerate case when $\mm_\out$ is negatively proportional to some $\mm_i$, in which case $\Ddiv_\out = \Ddiv_i$ (this is ruled out if $\cone(\mm_1,\dots,\mm_J)$ is strongly convex).

For $i = 1,\dots,J$, let $x_i^1,\dots,x_i^{\ell_i}$ be pairwise distinct points in $\Ddiv_i^o$.
We denote the blowup of $Y_\mmvec$ at all of these points by $Y_\mmvec[\bPvec]$, with corresponding exceptional divisors $\E_i^j \subset Y_\mmvec[\bPvec]$ for $i=1,\dots,J$ and $j = 1,\dots,\ell_i$.
We will denote the strict transform of $\Ddiv_i \subset Y_\mmvec$ in $Y_\mmvec[\bPvec]$ by $\Ddiv_i[\bPvec]$, and (by slight abuse) we denote the strict transform of $\Ddiv_\out \subset Y_\mmvec$ in $Y_\mmvec[\bPvec]$ again by $\Ddiv_\out$.
Let $\Ddiv_1^o[\bPvec],\dots,\Ddiv_J^o[\bPvec],\Ddiv_\out^o$ denote the restrictions to $Y_\mmvec[\bPvec]^o$ of $\Ddiv_1[\bPvec],\dots,\Ddiv_J[\bPvec],\Ddiv_\out$ respectively.

Put $\ka := \gcd(\mm_\out) \in \Z_{\geq 1}$, and let $\be_{\bPvec} \in H_2(Y_\mmvec[\bPvec])$ be the homology class characterized by:
\begin{itemize}
  \item   $\be_\bPvec \cdot \Ddiv_\out = \ka$
  \item $\be_\bPvec \cdot \Ddiv_i[\bPvec] = 0$ for $i = 1,\dots,J$ (excluding $i=j$ if $\Ddiv_\out = \Ddiv_j[\bPvec]$)
  \item $\be_\bPvec \cdot \E_i^j = \rho_i^j$ for all $i = 1,\dots,J$ and $j \in 1,\dots,\ell_i$.
\end{itemize}
Let $\calM_{\be_\bPvec}(Y_\mmvec[\bPvec] / \Ddiv_\out)$ denote the moduli space of holomorphic maps $u: \CP^1 \ra  Y_\mmvec[\bPvec]$ such that
\begin{itemize}
  \item $u$ lies in homology class $\be_\bPvec$
  \item $u$ has full contact order $\ka$ with $\Ddiv_\out$ (at an unspecified point) at $\infty \in \CP^1$,
\end{itemize}
modulo biholomorphic reparametrizations of $\CP^1$ fixing $\infty$.
Note that any curve in $\calM_{\be_\bPvec}(Y_\mmvec[\bPvec] / \Ddiv_\out)$ necessarily has image contained in $Y^o_\mmvec[\bPvec]$ by positivity of intersections.

Now let $\ovl{\calM}_{\be_\bPvec}(Y_\mmvec[\bPvec] / \Ddiv_\out)$ denote the compactification of $\calM_{\be_\bPvec}(Y_\mmvec[\bPvec] / \Ddiv_\out)$ by relative stable maps (in the sense of \cite{li2001stable}), and let $\ovl{\calM}_{\be_\bPvec}(Y_\mmvec^o[\bPvec] / \Ddiv_\out^o) \subset \ovl{\calM}_{\be_\bPvec}(Y_\mmvec[\bPvec] / \Ddiv_\out)$ denote the open subspace of maps which avoid $Y_\mmvec[\bPvec] \setminus Y_\mmvec^o[\bPvec]$.
Strictly speaking, this is defined in \cite[\S5.1]{gross2010tropical} by first adding additional rays in order to desingularize $Y_\mmvec$, but we will suppress this from the notation.
According to \cite[Prop. 5.1]{gross2010tropical}, $\ovl{\calM}_{\be_\bPvec}(Y_\mmvec^o[\bPvec] / \Ddiv_\out^o)$ is compact and carries a natural virtual fundamental class, and we define Gromov--Witten-type invariants by:
\begin{align*}
N_\mmvec[\bPvec] := \# \ovl{\calM}_{\be_\bPvec}(Y_\mmvec^o[\bPvec] / \Ddiv_\out^o) := \int_{[\ovl{\calM}_{\be_\bPvec}(Y_\mmvec^o[\bPvec] / \Ddiv_\out^o)]^{\op{vir}}} 1 \in \Q.
\end{align*}

\begin{thm}[{\cite[Thm. 5.4]{gross2010tropical}, extended as in \cite[\S5.7]{gross2010tropical}}]\label{thm:GPS}

Fix $\mm_1,\dots,\mm_J \in \Z^2$ primitive and pairwise distinct and $\ell_1,\dots,\ell_J \in \Z_{\geq 1}$ for some $J \in \Z_{\geq 2}$, and
let $\calD_{\mm_1,\dots,\mm_J}^{\ell_1,\dots,\ell_J}$ be the associated basic scattering diagram in $\R^2$.
For each primitive $\mm \in \Z^2$, let $\ff_\mm^\out \in \C[z^\mm]\ll t \rr$ be the label attached to the outgoing ray $\R_{\geq 0} \cdot \mm$ in $\ks(\calD_{\mm_1,\dots,\mm_J}^{\ell_1,\dots,\ell_J})_\min$. 
Then we have:
\begin{align*}
\log \ff_\mm^\out = \sum\limits_{\ka = 1}^\infty \sum\limits_{|\bP_1|\mm_1 + \cdots + |\bP_J| \mm_J = \ka \mm} \ka N_\mmvec[\bPvec] t^{|\bP_1| + \cdots + |\bP_J|}  z^{\ka \mm},
\end{align*}
where the sum is over all ordered partitions $\bP_1,\dots,\bP_J$ of respective lengths $\ell_1,\dots,\ell_J$ such that $\sum\limits_{i=1}^J|\bP_i|\mm_i = \ka \mm$.
\begin{align*}
 \end{align*}
\end{thm}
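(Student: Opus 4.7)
The plan is to reduce the claimed identity to a correspondence between three enumerative frameworks: scattering coefficients, weighted tropical curve counts in $\R^2$, and relative Gromov--Witten invariants of $Y_\mmvec[\bPvec]$. First I would interpret the Kontsevich--Soibelman algorithm tropically. Each incoming wall $(\R_{\leq 0}\cdot\mm_i,(1+tz^{\mm_i})^{\ell_i})$ can be broken up as a superposition of $\ell_i$ "elementary" walls by replacing the exponent $\ell_i$ with an ordered partition; expanding $\log(1+tz^{\mm_i})$ as $\sum_{\rho\geq 1}\tfrac{(-1)^{\rho-1}}{\rho}(tz^{\mm_i})^\rho$ then shows that a single monomial datum contributing to $\log \ff_\mm^\out$ is equivalent to a choice of ordered partitions $\bPvec = (\bP_1,\dots,\bP_J)$ together with a rational trivalent tree in $\R^2$ obeying the balancing condition, whose unbounded ends in direction $\mm_i$ have weights $\rho_i^1,\dots,\rho_i^{\ell_i}$ and which has a single outgoing end of weight $\ka$ in direction $\mm$. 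The claim is that summing such trees with the Mikhalkin vertex multiplicity reproduces the coefficient of $z^{\ka\mm}t^{|\bP_1|+\cdots+|\bP_J|}$ in $\log \ff_\mm^\out$. This step is combinatorial and is carried out by induction on the order in $t$, exactly as in the proof of Theorem~2.4 of \cite{gross2010tropical}.

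Next I would set up the correspondence theorem between these weighted tropical trees and honest holomorphic curves in $Y_\mmvec[\bPvec]$. A tropical end of weight $\rho_i^j$ pointing in direction $\mm_i$ translates, via the Nishinou--Siebert toric degeneration machinery, into an intersection of order $\rho_i^j$ with the divisor $\Ddiv_i \subset Y_\mmvec$ at an unspecified smooth point. To pin the intersection point to the chosen $x_i^j \in \Ddiv_i^o$, one blows up $Y_\mmvec$ at the points $\{x_i^j\}$: a curve meeting $\Ddiv_i$ of order $\rho_i^j$ at $x_i^j$ corresponds, under strict transform, to a curve in $Y_\mmvec[\bPvec]$ with intersection number $\rho_i^j$ with $\E_i^j$ and zero intersection number with $\Ddiv_i[\bPvec]$. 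Together with the full contact order $\ka$ with $\Ddiv_\out$, this determines precisely the class $\be_\bPvec$ and the contact profile defining $\calM_{\be_\bPvec}(Y_\mmvec[\bPvec]/\Ddiv_\out)$.

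Third, I would match multiplicities. Each tropical tree contributes a product of local Mikhalkin multiplicities at its trivalent vertices; the correspondence theorem identifies this with the local contribution to $\#\ovl{\calM}_{\be_\bPvec}(Y_\mmvec^o[\bPvec]/\Ddiv_\out^o)$ at the corresponding relative stable map, and the sum of these contributions is exactly $N_\mmvec[\bPvec]$. The overall factor of $\ka$ in the statement comes from the familiar multi-cover / logarithm relation: the coefficient of $z^{\ka\mm}$ in $\log \ff_\mm^\out$ picks up a factor $\ka$ (equivalently, a factor $1/\ka$ is absorbed into the definition of the multiple cover contribution), and this matches the contact order $\ka$ at $\Ddiv_\out$ appearing in $\be_\bPvec$.

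The main obstacle is the correspondence/multiplicity step: one must handle non-generic tropical configurations, superabundant trees, and the fact that $Y_\mmvec$ is typically singular. Following \cite[\S5.7]{gross2010tropical}, one resolves this by refining the fan of $Y_\mmvec$ to obtain a smooth toric surface before blowing up, verifying that the added rays do not change the invariants since the corresponding intersection numbers vanish. A subsidiary obstacle is combinatorial consistency of the Kontsevich--Soibelman output across different orderings of wall-crossings; this is the content of the uniqueness-up-to-equivalence of $\ks(\calD)$, so nothing new is required. Assembling these ingredients and comparing coefficients of $t^{|\bP_1|+\cdots+|\bP_J|}z^{\ka\mm}$ yields the displayed formula.
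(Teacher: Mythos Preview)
The paper does not prove Theorem~\ref{thm:GPS}; it is quoted verbatim as a result of Gross--Pandharipande--Siebert (with the extension in \S5.7 of \cite{gross2010tropical}), and the paper uses it as a black box in the proof of Theorem~\ref{thm:wp_and_sd}. So there is no ``paper's own proof'' to compare against.

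That said, your sketch is a faithful outline of the actual argument in \cite{gross2010tropical}: the three-step structure (scattering $\leftrightarrow$ tropical via Theorem~2.4 of loc.\ cit., then tropical $\leftrightarrow$ holomorphic via a Nishinou--Siebert style toric degeneration, then matching multiplicities) is exactly how the proof runs. A couple of minor points: the phrase ``superabundant trees'' is a red herring here, since in genus zero for rational tropical curves in $\R^2$ superabundance does not occur; and the factor of $\ka$ arises more concretely because each unbounded outgoing edge of weight $\ka$ contributes a factor $\ka$ to the tropical multiplicity on the scattering side, which is then absorbed into the definition of $N_\mmvec[\bPvec]$ on the Gromov--Witten side. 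Otherwise your account is accurate.
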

\begin{rmk}
In particular, it follows that  $N_\mmvec[\bPvec]$ does not depend on the precise locations of the blowup points $\{x_i^j\}$ (this can also be checked directly by a compactness argument, c.f. \cite[\S5.2]{gross2010tropical}).
\end{rmk}

A typical element of $\ovl{\calM}_{\be_\bPvec}(Y_\mmvec^o[\bPvec] / \Ddiv_\out^o)$ consists of various curve components which are organized into a main level in $Y_\mmvec^o[\bPvec]$ (possibly vacuous) and some number (possibly zero) of ``neck'' levels in the $\CP^1$-bundle $\CP(\1_{\Ddiv^o_\out} \oplus N_{\Ddiv_\out^o}) \ra \Ddiv_\out^o$, where $N_{\Ddiv_\out^o}$ is the normal bundle of $\Ddiv_\out^o$ in $Y_\mmvec^o[\bPvec]$, subject to suitable matching, tangency, and stability conditions.\footnote{More formally, \cite{gross2010tropical} uses the language of {\em destabilizations}, while \cite{li2001stable} uses the language of {\em expanded degenerations}.}
In particular, there is a forgetful map which projects neck components down to $\Ddiv_\out^o$:
\begin{align*}
\for: \ovl{\calM}_{\be_\bPvec}(Y_\mmvec^o[\bPvec] / \Ddiv_\out^o) \ra \ovl{\calM}_{\be_\bPvec}(Y_\mmvec[\bPvec]),
\end{align*}
where the target is the usual moduli space of stable maps $\CP^1 \ra Y_\mmvec[\bPvec]$ in homology class $\be_\bPvec$.

Recall that we put $\ka := \gcd(\mm_\out)$, where $\mm_\out := \sum\limits_{i=1}^J |\bP_i| \mm_i$.
\begin{lemma}\label{lem:P_to_P_prime}
If $\ka = 1$ and $\ovl{\calM}_{\be_\bPvec}(Y^o_{\mmvec}[\bPvec] / \Ddiv_\out^o) \neq \nil$, then there exists a rational algebraic curve $\CP^1 \ra Y^o_\mmvec[\bPvec]$ which intersects $\Ddiv_\out^o$ transversely in one point and is otherwise disjoint from $\Ddiv_1^o[\bPvec],\dots,\Ddiv_J^o[\bPvec],\Ddiv_\out^o$.  
If we further assume that $\cone(\mm_1,\dots,\mm_J)$ is strongly convex, then we have 
\begin{align*}
\ovl{\calM}_{\be_\bPvec}(Y^o_{\mmvec}[\bPvec] / \Ddiv_\out^o) = \calM_{\be_\bPvec}(Y_{\mmvec}[\bPvec] / \Ddiv_\out).
\end{align*}
\end{lemma}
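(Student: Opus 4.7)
The plan is to analyze any stable map $u \colon \Si \to Y^o_\mmvec[\bPvec]$ in $\ovl{\calM}_{\be_\bPvec}(Y^o_\mmvec[\bPvec]/\Ddiv_\out^o)$ by combining positivity of intersections with the toric geometry of the target. The thread running through both parts is the observation that no irreducible component of $u$ can map nonconstantly onto any $\Ddiv_i[\bPvec] \cong \CP^1$: such a map would be surjective, and hence pass through the toric fixed points excised in the $^o$-version.

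For part (a), I first rule out rubber components. With $\ka = 1$, a neck component in $\CP(\mathbf{1} \oplus N_{\Ddiv_\out^o}) \to \Ddiv_\out^o$ would carry multiplicity $1$ contact with both the zero and infinity sections, and after the rubber $\C^*$-quotient it would have too few special points to be stable. Hence every stable map factors through the main level $Y^o_\mmvec[\bPvec]$. Let $C_0 \subset \Si$ denote the irreducible component carrying the (necessarily transverse) contact with $\Ddiv_\out^o$. By positivity of intersections together with the first-paragraph observation, every irreducible component has non-negative intersection with each $\Ddiv_i[\bPvec]$; since these contributions sum to $\be_\bPvec \cdot \Ddiv_i[\bPvec] = 0$, they vanish individually, and $u|_{C_0}$ is the required rational curve.

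For part (b), I must rule out every irreducible component $C' \neq C_0$. A nonconstant such $C'$ has class $\al$ with $\al \cdot \Ddiv_\out = 0$ and $\al \cdot \Ddiv_i[\bPvec] = 0$; it cannot be contained in any $\E_i^j$ (else $\al \cdot \Ddiv_i[\bPvec] \geq 1$) nor in any $\Ddiv_i[\bPvec]$ (by the above), so its pushforward $\pi(C')$ is a compact rational curve in $Y_\mmvec \setminus \Ddiv_\out$. Under strong convexity of $\cone(\mm_1,\dots,\mm_J)$, the affinization morphism $Y_\mmvec \setminus \Ddiv_\out \to \op{Spec}\,\Gamma(Y_\mmvec \setminus \Ddiv_\out, \mathcal{O})$ is a proper birational contraction onto the affine toric variety associated to $\cone(-\mm_1,\dots,-\mm_J)$, whose exceptional locus is precisely the union of the compact interior toric divisors $\Ddiv_i$. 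Hence $\pi(C')$ lies in some such $\Ddiv_i$, but $\pi(C') \subset Y^o_\mmvec$ must avoid its toric fixed points, forcing $\pi(C')$ to be a point --- contradicting nonconstancy of $u|_{C'}$. Constant bubble components are then ruled out by stability of the domain tree, since any leaf other than $C_0$ would carry only one special point. Hence $u$ is a single smooth $\CP^1$, giving $\ovl{\calM}_{\be_\bPvec}(Y^o_\mmvec[\bPvec]/\Ddiv_\out^o) \subseteq \calM_{\be_\bPvec}(Y_\mmvec[\bPvec]/\Ddiv_\out)$; the reverse inclusion follows from the same positivity argument applied to smooth irreducible curves.

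The main obstacle is the affinization step: strong convexity is precisely what guarantees that $Y_\mmvec \setminus \Ddiv_\out$ admits a well-behaved proper contraction onto its affine hull, with exceptional locus supported on toric divisors. Without this hypothesis --- for instance when $\cone(\mm_1,\dots,\mm_J)$ contains a line --- the complement can admit continuous families of non-toric compact rational curves (e.g.\ fibers of $\CP^1 \times \CP^1 \setminus \CP^1 \cong \C \times \CP^1$), which would evade the contradiction and make the second conclusion of the lemma genuinely fail.
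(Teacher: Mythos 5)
Your proof takes a genuinely different route from the paper's on both halves, and the route is in large part valid, but there are some exposition gaps worth flagging.

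For the first statement, the paper does \emph{not} try to rule out neck levels: it applies the forgetful map $\for$ to land in $\ovl{\calM}_{\be_\bPvec}(Y_\mmvec[\bPvec])$, notes that every component of $\for(C)$ has image distinct from each $\Ddiv_i[\bPvec]$ and $\Ddiv_\out$ (hence intersects them nonnegatively), and reads off from the intersection numbers that exactly one component meets $\Ddiv_\out$ once and nothing else. You instead attempt to show that with $\ka=1$ there simply are no neck levels. This can be made to work, but the essential input is that the \emph{base} $\Ddiv_\out^o \cong \C^*$ of the rubber $\CP^1$-bundle is affine, so every component of a neck level must map to a single fiber; only then does $\ka=1$ force the neck to be a single trivial cylinder with two special points, which is rubber-unstable. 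Your opening-paragraph remark about surjecting onto $\Ddiv_i[\bPvec]$ is adjacent but is not this statement, and the phrase ``a neck component would carry multiplicity~$1$ contact with both ... sections'' presupposes what has to be shown (that the neck is one fiber cover). You also skip the degenerate case $\mm_\out \in \R_{<0}\cdot\mm_i$, in which $\Ddiv_\out = \Ddiv_i[\bPvec]$ and the required curve is simply an exceptional divisor $\E_i^j$; the paper disposes of this upfront. It is excluded by strong convexity, so this affects only the first half of the lemma.

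For the second statement, the paper's mechanism is the rational equivalence $\langle \mm_\out',\nn\rangle \Ddiv_\out \sim \sum_i \langle \mm_i,\nn\rangle \Ddiv_i$ in $A_1(Y_\mmvec)$: since $\uvl{C}_1\cdot\Ddiv_\out=0$ and $c_i := \Ddiv_i \cdot \uvl{C}_1 \geq 0$, one gets $\sum c_i \mm_i = 0$, and strong convexity forces all $c_i=0$, so $\uvl{C}_1$ is disjoint from the full toric boundary and hence lies in the affine torus. Your affinization argument --- proper contraction of $Y_\mmvec\setminus\Ddiv_\out$ onto $V_{\cone(-\mm_1,\dots,-\mm_J)}$ with exceptional locus the interior $\Ddiv_i$'s, so a compact curve avoiding $\Ddiv_\out$ is contained in some $\Ddiv_i$ and, avoiding fixed points, is constant --- is a correct and somewhat more structural alternative; the paper's is more elementary intersection theory. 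Both ultimately rest on the absence of compact curves in affine varieties. Your closing remark about what fails without strong convexity (e.g.\ the ruling of $\CP^1\times\CP^1\setminus\CP^1$) is correct and a useful sanity check.
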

\begin{proof}
First note that we can assume that $\mm_\out$ is not negatively proportional to some $\mm_i$, since in that case we have $\Ddiv_\out = \Ddiv_i$, whence we can take our curve to be any of the exceptional divisors $\E_i^j$ for $j \in \{1,\dots,\ell_i\}$ (and this situation cannot occur if $\cone(\mm_1,\dots,\mm_J)$ is strongly convex).
Given $C \in \ovl{\calM}_{\be_\bPvec}(Y^o_{\mmvec}[\bPvec] / \Ddiv_\out^o)$, note that each component of $\for(C)  \in \ovl{\calM}_{\be_\bPvec}(Y_\mmvec[\bPvec])$ must have image distinct from each of $\Ddiv_1[\bPvec],\dots,\Ddiv_J[\bPvec],\Ddiv_\out$, and hence it must intersect each of these nonnegatively (c.f. \cite[\S4.2]{gross2010quivers}). 
By positivity of intersections and the definition of the homology class $\be_\bPvec \in H_2(Y_\mmvec[\bPvec])$, it follows that:
\begin{itemize}
  \item each component of $\for(C)$ has trivial intersection number with each of $\Ddiv_1[\bPvec],\dots,\Ddiv_J[\bPvec]$ 
  \item exactly one component $C_0$ of $\for(C)$ satisfies $C_0 \cdot \Ddiv_\out = 1$, and the remaining components of $\for(C)$ have trivial intersection number with $\Ddiv_\out$.
\end{itemize}
In particular, $C_0$ intersects $\Ddiv_\out$ transversely in exactly one point and is disjoint from each of $\Ddiv_1[\bPvec],\dots,\Ddiv_J[\bPvec]$.

Under the further assumption that $\cone(\mm_1,\dots,\mm_J)$ is strongly convex, we claim that $\for(C) = C_0$, which then implies $\ovl{\calM}_{\be_\bPvec}(Y^o_{\mmvec}[\bPvec] / \Ddiv_\out^o) = \calM_{\be_\bPvec}(Y_{\mmvec}[\bPvec] / \Ddiv_\out)$ by stability considerations. Since $\for(C)$ is a stable map, it suffices to show that any other component $C_1$ of $\for(C)$ would necessarily be constant.
To see this, note that for $\mm_\out' := \mm_\out/\ka$ with $\ka := \gcd(\mm_\out)$ and any $\nn \in \NN$, we have
\begin{align*}
\langle \mm_\out',\nn\rangle \Ddiv_\out -\sum_{i=1}^J \langle \mm_i,\nn\rangle \Ddiv_i
\end{align*}
vanishes as an element of the Chow group $A_{1}(Y_\mmvec)$ (see e.g. \cite[\S3.3]{fulton1993introduction}).
Letting $\uvl{C}_1$ denote the image of $C_1$ in $Y_\mmvec$, which necessarily satisfies $\uvl{C}_1 \cdot \Ddiv_\out = 0$, we then have
\begin{align*}
0 = (\Ddiv_\out \cdot \uvl{C}_1) \mm_\out'  = \sum_{i=1}^J  (\Ddiv_i \cdot \uvl{C}_1) \mm_i = \sum_{i=1}^J c_i\mm_i
\end{align*}
for $c_i := \Ddiv_i \cdot \uvl{C}_1 \in \Z_{\geq 0}$.
Strong convexity then implies $c_1 = \cdots = c_J = 0$, meaning that $\uvl{C}_1$ is constant, and hence $C_1$ is also constant.
\end{proof}

\begin{lemma}\label{lem:no_pos_dim_fams}
For any Looijenga pair $(X,\calN)$, there are no positive-dimensional families of rational algebraic curves in $X$ which intersect $\calN$ in a single point.
\end{lemma}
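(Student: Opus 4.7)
The plan is to suppose toward contradiction that $\{C_t\}_{t \in T}$ is a positive-dimensional family of rational algebraic curves in $X$, each meeting $\calN$ in a single point $p_t$. After shrinking and normalizing the universal family (and performing a finite base change on $T$ if necessary), I may assume $T$ is a smooth irreducible curve, the $C_t$ are pairwise distinct, and the family is realized by a morphism $F : \mathbb{P}^1 \times T \to X$ whose fibers parametrize the normalizations of the $C_t$. Since the $C_t$ are pairwise distinct curves covering a $2$-dimensional subset of the surface $X$, the map $F$ is dominant. Let $\sigma_1, \dots, \sigma_m \subset \mathbb{P}^1 \times T$ be the sections giving the preimages of $p_t$ under the normalization $\mathbb{P}^1_t \to C_t$, so that $F^{-1}(\calN) = \bigcup_i \sigma_i$ set-theoretically.

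The key tool is the meromorphic $2$-form $\Omega$ on $X$ with simple logarithmic poles along $\calN$, arising from the triviality of $K_X + \calN$; it is nowhere vanishing on $X \setminus \calN$. Because $F$ is dominant, $F^*\Omega$ is a nonzero meromorphic $2$-form on $\mathbb{P}^1 \times T$. Its restriction to each fiber $\mathbb{P}^1 \times \{t\}$ vanishes for trivial dimensional reasons, so it takes the local form $g(s,t)\, ds \wedge dt$ with poles only along the $\sigma_i$.

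The first step is to apply the residue theorem on each fiber $\mathbb{P}^1_t$ to the relative $1$-form $g(s,t)\, ds$. A direct local computation identifies the residue of $F^*\Omega$ along $\sigma_i$ with $k_i \cdot \gamma^*\!\left(\mathrm{Res}_\calN \Omega\right)$, where $\gamma : T \to \calN$ sends $t \mapsto p_t$ and $k_i \geq 1$ is the ramification index of $F$ along $\sigma_i$. Summing gives $(\textstyle\sum_i k_i) \cdot \gamma^*\!\left(\mathrm{Res}_\calN \Omega\right) = 0$, and since $\sum_i k_i = C_t \cdot \calN > 0$ we conclude $\gamma^*\!\left(\mathrm{Res}_\calN \Omega\right) = 0$. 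Now $\mathrm{Res}_\calN \Omega$ is a nonvanishing global section of $\omega_\calN$ (which is trivial because $\calN$ has arithmetic genus one by adjunction), so this forces $\gamma$ to be constant: the intersection point $p_t = p$ is actually independent of $t$.

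The second step exploits this: rerunning the same local computation with the extra information that $F(\sigma_i(t),t) = p$ is $t$-independent shows that the coefficient of the would-be simple pole of $F^*\Omega$ along each $\sigma_i$ vanishes identically, since pulling back a transverse coordinate to $\calN$ at $p$ yields a $t$-independent function along $\sigma_i$. Hence $F^*\Omega$ is in fact a global holomorphic $2$-form on $\mathbb{P}^1 \times T$, but $H^0(\mathbb{P}^1 \times T, \Omega^2_{\mathbb{P}^1 \times T}) = 0$ by K\"unneth (since $H^0(\mathbb{P}^1, \Omega^1_{\mathbb{P}^1}) = 0$), contradicting $F^*\Omega \neq 0$. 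The main obstacle will be cleanly handling the general case $m \geq 2$ of non-unibranch singularities of $C_t$ along $\calN$; the residue-theorem argument sidesteps case analysis by exploiting that the residues at the various $\sigma_i$ are all proportional to the same pullback $\gamma^*\!\left(\mathrm{Res}_\calN \Omega\right)$.
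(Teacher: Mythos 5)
Your proof is correct and rests on the same central tool as the paper's, namely the anticanonical log $2$-form $\Omega$ on $(X,\calN)$ pulled back to the normalized universal family $F:\CP^1\times T\to X$. The paper's argument (following \cite[Lem.~1.1]{bousseau2021scattering}) is a one-step version: contract $F^*\Omega$ with a vector field in the base direction and restrict to a fiber, obtaining a $1$-form on $\CP^1$ with a single simple pole, which is impossible since $\deg K_{\CP^1}=-2$. That version is clean when each $C_t$ is unibranch at its intersection with $\calN$ (so the preimage of $p_t$ in the normalization is one point, which can be put at $\infty$); your two-step version does not need that assumption. The key new observation is the uniformity of the residues: each $\mathrm{Res}_{\sigma_i}F^*\Omega$ equals $-k_i\,\gamma^*(\mathrm{Res}_\calN\Omega)$ with the same sign and the same factor $\gamma^*(\mathrm{Res}_\calN\Omega)$, so the fiberwise residue theorem forces $\gamma^*(\mathrm{Res}_\calN\Omega)=0$, hence $\gamma$ constant, hence all residues vanish, hence $F^*\Omega$ is holomorphic — contradicting $H^0(\CP^1\times T,\Omega^2)=0$. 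In the unibranch case $m=1$ this collapses to the paper's degree argument; in general it fills in a detail that the paper's one-line sketch elides. Two small phrasing points: the residue identity should carry a minus sign, $\mathrm{Res}_{\sigma_i}F^*\Omega=-k_i\,\gamma^*(\mathrm{Res}_\calN\Omega)$ (harmless for the argument), and the justification in your Step 2 is a bit circuitous — once $\gamma^*(\mathrm{Res}_\calN\Omega)=0$ you already know each residue of $F^*\Omega$ along $\sigma_i$ vanishes, and a $2$-form with log pole along a smooth divisor and identically zero residue is holomorphic, so no separate re-computation is needed.
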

\begin{proof}
This follows exactly as in the proof of \cite[Lem. 1.1]{bousseau2021scattering}, which in turn is based on the argument that complex K3 surfaces are not uniruled (see e.g. \cite[\S4.1]{huybrechts2016lectures}).
Namely, let $\om$ be a holomorphic two-form on $X \setminus \calN$ with simple poles along $\calN$.
Given such a positive-dimensional family, we could find
a dominant rational map $F: \CP^1 \times S \dashrightarrow X$ for some Riemann surface $S$, such that $F^*\om$ is a holomorphic two-form on 
$(\CP^1 \setminus \{\infty\}) \times S$
 with simple poles along $\{\infty\} \times S$.
Let $\mathcal{X}$ be a nonvanishing holomorphic vector field defined on some open subset $U \subset S$, viewed as a vector field on $\CP^1 \times U$ which is trivial in the first factor.
Then, by contracting $F^*\om$ with $\mathcal{X}$ and restricting to $\CP^1 \times \{s\}$ for $s \in U$, we get a one-form on $\CP^1 \setminus \{\infty\}$ with a simple pole along $\{\infty\}$, which is a contradiction (the pole order must be at least $2$).
\end{proof}

In the strongly convex case, Lemma ~\ref{lem:P_to_P_prime} says that every curve in $\ovl{\calM}_{\be_\bPvec}(Y_\mmvec^o[\bPvec] / \Ddiv_\out^o)$ must be irreducible, while Lemma~\ref{lem:no_pos_dim_fams} further rules out positive dimensional families of irreducible curves.
 Since we are considering algebraic Gromov--Witten type invariants defined using integrable complex structures, these together have the following useful consequence. 

\begin{cor}\label{cor:curves_count_pos}
If $\ka = 1$ and $\cone(\mm_1,\dots,\mm_J)$ is strongly convex, then every curve in 
$\ovl{\calM}_{\be_\bPvec}(Y_\mmvec^o[\bPvec] / \Ddiv_\out^o)$
counts as a positive integer.
In particular, we have $N_\mmvec[\bPvec] \in \Z_{\geq 0}$, with $N_\mmvec[\bPvec] > 0$ if and only if $\calM_{\be_\bPvec}(Y_\mmvec[\bPvec] / \Ddiv_\out) \neq \nil$.
\end{cor}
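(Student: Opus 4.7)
The plan is to combine Lemmas~\ref{lem:P_to_P_prime} and~\ref{lem:no_pos_dim_fams} with the standard fact that, in algebraic Gromov--Witten theory, a $0$-dimensional moduli space whose expected dimension is also $0$ contributes to its virtual count as a sum of positive integer multiplicities.

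First, I would invoke Lemma~\ref{lem:P_to_P_prime} with $\ka=1$: this yields the identification
\[
\ovl{\calM}_{\be_\bPvec}(Y_\mmvec^o[\bPvec]/\Ddiv_\out^o) \;=\; \calM_{\be_\bPvec}(Y_\mmvec[\bPvec]/\Ddiv_\out),
\]
with every element an irreducible rational curve $u:\CP^1 \to Y_\mmvec[\bPvec]$ meeting $\Ddiv_\out$ transversally at a single point and disjoint from the remaining toric boundary components. Since $\be_\bPvec \cdot \Ddiv_\out = \ka = 1$, any such $u$ must be birational onto its image, as a degree-$d$ multiple cover would force $d\,|\,\ka=1$.

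Second, I would pin down the dimension using Lemma~\ref{lem:no_pos_dim_fams}. The pair $(Y_\mmvec[\bPvec],\Ddiv[\bPvec])$, with $\Ddiv[\bPvec] := \Ddiv_1[\bPvec] + \cdots + \Ddiv_J[\bPvec] + \Ddiv_\out$ the strict transform of the toric boundary of $Y_\mmvec$, is a Looijenga pair, because $\Ddiv[\bPvec]$ is an anticanonical nodal cycle of rational curves (obtained from the toric anticanonical cycle in $Y_\mmvec$ by blowing up interior points of its components). Each curve in our moduli space intersects $\Ddiv[\bPvec]$ in exactly one point, so Lemma~\ref{lem:no_pos_dim_fams} rules out positive-dimensional families, and the properness of the space of relative stable maps then forces the moduli space to be a finite disjoint union of (possibly nonreduced) points.

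Third, I would turn to positivity. A direct Riemann--Roch computation (equivalently, the standard virtual dimension formula applied to a log Calabi--Yau pair with a single maximally tangent point) shows that the virtual complex dimension of $\calM_{\be_\bPvec}(Y_\mmvec[\bPvec]/\Ddiv_\out)$ is $0$, matching the actual dimension from Step~2. When actual and virtual dimensions coincide, the Behrend--Fantechi virtual fundamental class decomposes as $\sum_{[u]} \ell_{[u]} \cdot [u]$, with $\ell_{[u]} \in \Z_{\geq 1}$ the length of the local Artinian ring of the moduli scheme at $[u]$. The conclusion $N_\mmvec[\bPvec] \in \Z_{\geq 0}$, with strict positivity iff $\calM_{\be_\bPvec}(Y_\mmvec[\bPvec]/\Ddiv_\out) \neq \nil$, then follows. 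The main obstacle is confirming this last reduction rigorously in the relative setting: one must check that the virtual class is not contaminated by contributions from expanded degenerations along the $\CP^1$-bundle over $\Ddiv_\out^o$, which is guaranteed by Step~1 since every curve lives already in the main level $Y_\mmvec^o[\bPvec]$. A more hands-on alternative would appeal to automatic transversality for simple rational curves in a complex surface, combined with the complex orientation to ensure each isolated point contributes a positive integer.
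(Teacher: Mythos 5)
Your proof is correct and follows essentially the same route as the paper, which (in the paragraph preceding the corollary) combines Lemma~\ref{lem:P_to_P_prime} and Lemma~\ref{lem:no_pos_dim_fams} to reduce the moduli space to a finite set of irreducible curves, and then invokes the standard positivity of algebraic Gromov--Witten contributions when actual and virtual dimensions agree. You have spelled out the virtual-dimension count and the positivity mechanism in more detail, but the structure of the argument is the same.
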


As before, let $\tormod$ be a toric model for a uninodal Looijenga pair $(X,\calN)$ with associated data $\mmvec = (\mm_1,\dots,\mm_J)$ and $\ell_1,\dots,\ell_J \in \Z_{\geq 1}$ as in Notation~\ref{not:tormod_m_and_l}, and with toric Looijenga pair $(V^\tor,\Ddiv^\tor)$ and blowup set $\calS \subset \Ddiv^\tor$ as in Definition~\ref{def:tor_mod}. Given any ordered partitions $\bPvec = (\bP_1,\dots,\bP_J)$ with lengths $\ell_1,\dots,\ell_J$ and putting $\mm_\out := \sum\limits_{i=1}^J |\bP_i| \mm_i$,
note that we can identify $Y_{\mmvec}$ with $V^\tor_{+\mm_\out,\op{red}}$ and $Y_\mmvec[\bPvec]$ with $V^\tor_{+\mm_\out,\op{red}}[\calS]$ (recall Remark~\ref{rmk:Si_red_and_smth_vers}).

With these preliminaries, we are now ready to prove Theorem~\ref{thm:wp_and_sd}.

\begin{proof}[Proof of Theorem~\ref{thm:wp_and_sd}]

Suppose first that the scattering coefficient $\coef_{\ks(\calD_\tormod)_\min}(z^{\wp_X(p,q)})$ is nonzero. According to \eqref{eq:ff_mm}, we have either $\ff_{\wp_X(p,q)}^\out \neq 1$ or $\ff_{\wp_X(p,q)}^\inn \neq 1$ (or both).
In the former case, by Theorem~\ref{thm:GPS} we have 
$N_\mmvec[\bPvec] \neq 0$ for some ordered partitions $\bP_1,\dots,\bP_J$ of lengths $\ell_1,\dots,\ell_J$ such that $\sum\limits_{i=1}^J |\bP_i|\mm_i = \wp_X(p,q)$.
Thus $\ovl{\calM}_{\be_\bPvec}(Y_\mmvec^o[\bPvec] / \Ddiv_\out^o) \neq \nil$, and hence by Lemma~\ref{lem:P_to_P_prime} there exist a rational algebraic curve in $Y_\mmvec[\bPvec] \cong V^\tor_{+\wp_X(p,q),\op{red}}$ which intersects $\Ddiv_\out$ once transversely and is otherwise disjoint from $\Ddiv_1[\bPvec],\dots,\Ddiv_J[\bPvec],\Ddiv_\out$.
It then follows from the bijection Proposition~\ref{prop:fund_bij} that there exists a rational algebraic curve in $X$ which is $(p,q)$-well-placed with respect to $\calN$.

On the other hand, in the case $\ff_{\wp_X(p,q)}^\inn \neq 1$, we must have $\wp_X(p,q) = -\mm_i$ for some $i \in \{1,\dots,J\}$ and hence $V^\tor_{+\wp_X(p,q)}[\calS] = V^\tor[\calS]$ (by definition). Then there is an exceptional divisor of $V^\tor_{+\wp_X(p,q)}[\calS] \ra V_{+\wp_X(p,q)}^\tor$ which intersects $\Ddiv_\out$ transversely in one point and is otherwise disjoint from $\Ddiv^\tor_{+\wp_X(p,q)}[\calS]$, whence Proposition~\ref{prop:fund_bij} again produces a $(p,q)$-well-placed curve in $X$.

Now assume that $\tormod$ is strongly convex and that there exists a rational algebraic curve in $X$ which is $(p,q)$-well-placed with respect to $\calN$, where $\gcd(p,q) = 1$.  Then $\gcd(\wp_X(p,q)) = 1$ by Proposition~\ref{prop:fund_bij}, and,
assuming that $\wp_X(p,q)$ is not equal to $-\mm_i$ for some $i \in \{1,\dots,J\}$, 
it follows by Proposition~\ref{prop:fund_bij} that we have $\calM_{\be_\bPvec}(Y_\mmvec[\bPvec] / \Ddiv_\out) \neq \nil$ for some ordered partitions $\bPvec = (\bP_1,\dots,\bP_J)$ of lengths $\ell_1,\dots,\ell_J$ such that $\sum_{i=1}^J |\bP_i| \mm_i = \wp_X(p,q)$.
By Theorem~\ref{thm:GPS} we then have 
$$
\coef_{\ks(\calD_\tormod)_\min}(z^{\wp_X(p,q)}) = \sum\limits_{|\bP_1'|\mm_1 + \cdots + |\bP_J'|\mm_J = \wp_X(p,q)}N_\mmvec[\bPvec'] t^{|\bP_1'| + \cdots + |\bP_J'|}.
$$
Since by Corollary~\ref{cor:curves_count_pos} the terms $N_{\mmvec'}[\bPvec']$ are all nonnegative, we have $N_\mmvec[\bPvec] > 0$, whence $\coef_{\ks(\calD_\tormod)_\min}(z^{\wp_X(p,q)}) \neq 0$.

Finally, if $\wp_X(p,q) = -\mm_i$ for some $i \in \{1,\dots,J\}$, then by strong convexity there are no outgoing rays $\R_{\geq 0} \cdot \wp_X(p,q)$ appearing in $\ks(\calD_\tormod)_\min$ (c.f. Remark~\ref{rmk:scat_rays_lie_in_cone}), and hence $\ff_{\wp_X(p,q)} = \ff_{-\mm_i}^\inn = (1+tz^{\mm_i})^{\ell_i}$, so 
$\coef_{\ks(\calD_\tormod)_\min}(z^{\wp_X(p,q)}) = \ell_i t \neq 0$.
\end{proof}

\sss

Observe that if $\tormod$ is a strongly convex toric model for a uninodal Looijenga pair $(X,\calN)$ with data $\mm_1,\dots,\mm_J \in \Z^2$ and $\ell_1,\dots,\ell_J \in \Z_{\geq 1}$, then for any given $\mm \in \Z^2$ there are only finitely many ordered partition tuples $\bPvec = (\bP_1,\dots,\bP_J)$ such that $|\bP_1|\mm_1 + \cdots + |\bP_J|\mm_J = \mm$.
It follows that $\coef_{\ks(\calD_\tormod)_\min}(z^{\mm})$ is a polynomial in $t$ (i.e. it lies in $\C[t]$), and in particular has a well-defined $t=1$ specialization $\coef_{\ks(\calD_\tormod)_\min}(z^{\mm})|_{t=1} \in \C$.
By the results of this section, for coprime $p,q \in \Z_{\geq 1}$, the quantity
\begin{align}\label{eq:p_q_GW_count_def}
\wpcount_{X,\calN}(p,q)& := 
\sum_{
\sum|\bP_i|\mm_i = \wp_X(p,q)} N_\mmvec[\bPvec] = 
\coef_{\ks(\calD_\tormod)_\min}(z^{\wp_X(p,q)})|_{t=1}
\end{align}
can be interpreted as the algebraic count of rational algebraic curves in $X$ which are $(p,q)$-well-placed with respect to $\calN$.

Although in this paper we are primarily concerned with understanding when $\wpcount_{X,\calN}(p,q)$ is nonzero, it is also very natural to study the counts themselves. 
For example, one can show that $\wpcount_{X,\calN}(p,q) = 1$ whenever $X$ is a rigid del Pezzo surface and $p/q$ is  the $x$-value of an outer corner point of the infinite staircase $c_X|_{[1,a_\acc^X]}$, and we have $\wpcount_{\CP^2,\calN_0}(p,q) = 3$ for the ghost staircase points (i.e. $\tfrac{p}{q} = \tfrac{8}{1},\tfrac{55}{8},\tfrac{377}{55}$ etc, c.f. \cite[\S7.1]{mcduff2024singular}).
In the case $q = 1$, computer experiments suggest the following conjectural formula for all $d \in \Z_{\geq 1}$:
\begin{align*}
\wpcount_{\CP^2,\calN_0}(3d-1,1) = \tfrac{2(4d-3)!}{d!(3d-1)!}.
\end{align*}
Note that any given count $\wpcount_{X,\calN}(p,q)$ can easily be computed algorithmically (see e.g. \cite{graefnitz_scattering}), and, when $X$ is a rigid del Pezzo surface, each such count agrees with infinitely many others using the symmetries discussed in \S\ref{sec:symmetries} (or their scattering diagram counterparts as in \cite[\S5]{gross2010quivers}). 

\begin{rmk}\label{rmk:refined_SD}
The notion of scattering diagrams considered in this paper has a natural generalization where we work over
 $\C[\MM]\ll t_1,\dots,t_J\rr$ rather than $\C[\MM]\ll t\rr$,
 and in the scattering diagram $\calD_\tormod = \calD_{\mm_1,\dots,\mm_J}^{\ell_1,\dots,\ell_J}$ associated to a toric model $\tormod$ as in \eqref{eq:D_tormod} we instead take the initial rays to be of the form $\bigl(\R_{\leq 0} \cdot \mm_i,(1+t_iz^{\mm_i})^{\ell_i}\bigr)$ for $i=1,\dots,J$.
Since \cite[Thm. 5.4]{gross2010tropical} holds in this setting, we also get an extension of Theorem~\ref{thm:wp_and_sd}, where essentially for a ray appearing with coefficient $t_1^{k_1}\cdots t_J^{k_J}$, the total coefficient $k_1+\cdots+k_J$ gives what was previously the coefficient of $t$, whereas the particular decomposition $(k_1,\dots,k_m)$ encodes the homology class of a $(p,q)$-well-placed curve in our uninodal Looijenga pair $(X,\calN)$.

Over these refined coefficients, even the basic scattering diagrams discussed in \S\ref{sec:std_sd} are substantially more complicated when $J \geq 3$, and are not (to our knowledge) well-understood. Under a homology-level refinement of the fundamental bijection, this is akin to classifying the homology classes of well-placed curves in $(X,\calN)$.
Note that the extra homology data in particular encodes the count of singularities of a well-placed curve away from its distinguished cusp (c.f. Remark~\ref{rmk:adj_not_compl_obs}).

For example, the classification of index zero unicuspidal rational algebraic curves in the first Hirzebruch surface discussed in \cite[\S6.5]{mcduff2024singular} (based on \cite{magillmcd2021,magill2022staircase}) exhibits subtle Cantour set behavior, which puts a lower bound of the complexity of the corresponding scattering diagram with refined coefficients.
A fuller understanding of this scattering diagram appears to play an important role in various generalizations of Theorems \ref{thmlet:main_SEEP} and \ref{thmlet:SEEP_dPs} (e.g. the general case of Problem~\ref{prob:SEEP}), which we plan to address in a followup paper.
\end{rmk}

\section{Basic scattering diagrams}\label{sec:std_sd}

Using Theorem~\ref{thm:wp_and_sd}, we have now reduced the existence of $(p,q)$-well-placed curves in a uninodal Looijenga pair to the nonvanishing of certain terms in an associated scattering diagram, so it remains to understand when these scattering terms are nonzero. 
We first discuss in \S\ref{subsec:change_lattice} the change of lattice trick from \cite[\S C.3]{GHKK2018}, which reduces the study of arbitary basic scattering diagrams with $J=2$ incoming rays to simplified diagrams of the form $\calD_{e_1,e_2}^{\ell_1,\ell_2}$ for some $\ell_1,\ell_2 \in \Z_{\geq 1}$. Then, in \S\ref{subsec:scat_pos} we review some recent advances in \cite{gross2010quivers,GHKK2018,grafnitz2023scattering} which imply scattering positivity for these simplified diagrams.
Finally, in \S\ref{subsec:wp_and_basic_sd} we combine the preceding results in order to conclude the proofs of Theorems ~\ref{thmlet:sesqui_plane_curves} and ~\ref{thmlet:curves_in_X}.

\subsection{Changes of lattice}\label{subsec:change_lattice}

Let $\MM$ be a rank two lattice, and let $\MM' \subset \MM$ be a rank two sublattice, with finite index denoted by $\ind(\MM' \subset \MM) \in \Z_{\geq 1}$.
Note that there is an inclusion-induced linear isomorphism $\MM'_\R \xrightarrow{\cong} \MM_\R$, and the dual lattice $\NN := \hom_\Z(\MM,\Z)$ is a sublattice of $\NN' := \hom_\Z(\MM',\Z)$.
For each nonzero $\mm' \in \MM'$, put
\begin{align}\label{eq:nu_def}
\nu(\mm') := \ind\left( \ann_{\NN}(\mm') \subset \ann_{\NN'}(\mm') \right),
\end{align}
where $\ann_\NN(\mm') := \{\nn \in \NN\;|\; \lan \nn,\mm'\ran = 0\}$ (resp. $\ann_{\NN'}(\mm')$) denotes the annihilator of $\mm'$ in $\NN$ (resp. $\NN'$).
Noting that $\nu(\mm')$ depends only on the ray $\frakd = \R_{\geq 0} \cdot \mm'$ spanned by $\mm'$, we will sometimes also denote $\nu(\mm')$ by $\nu(\frakd)$.

\begin{example}\label{ex:m_1_m_2_nu}
Let $\MM'$ be the sublattice of $\MM = \Z^2$ generated by $\mm_1 = (1,0)$ and $\mm_2 = (-1,-3)$.
Note that we have the natural identifications $\NN \cong \Z^2$ and
\begin{align*}
\NN' \cong \{(i,j) \in \R^2 \; | \; \lan (i,j),\mm_1\ran, \lan(i,j),\mm_2\ran \in \Z\} = \Z \times \tfrac{1}{3}\Z.
\end{align*}
We have $\ann_{\NN}(\mm_1) = \Z\lan (0,1)\ran$ and 
$\ann_{\NN'}(\mm_1) = \Z\lan (0,\tfrac{1}{3})\ran$, and thus $\nu(\mm_1) = 3$.
Similarly, we have $\ann_{\NN}(\mm_2) = \Z\lan (3,-1)\ran$ and $\ann_{\NN'}(\mm_2) = \Z\lan (1,-\tfrac{1}{3})\ran$, and thus $\nu(\mm_2) = 3$.
On the other hand, we have 
$\nu(\mm_1 + \mm_2) = \nu(0,-3) = \nu(0,-1) = 1$.
\end{example}

For any rank two lattice $\MM$, $\nu \in \Z_{\geq 1}$, and $\ff \in \C[\MM]\ll t\rr$ with $\ff \equiv 1$ modulo $t$, let $\ff^{1/\nu} \in \C[\MM]\ll t \rr$ denote the unique $\nu$th root of $\ff$ satisfying $\ff^{1/\nu} \equiv 1$ modulo $t$.
The following simple observation is the basis of the ``change of lattice trick'' from \cite[\S C.3]{GHKK2018}. 

\begin{lemma}\label{lem:subscatter}
Let $\MM'$ be a rank two sublattice of a rank two lattice $\MM$, let $\calD$ be a scattering diagram in $\MM'_\R$ which is consistent, and let $\calDroot$ be the scattering diagram in $\MM_\R$ defined by
\begin{align*}
\calDroot := \{(\frakd,\ff^{1/\nu(\frakd)}) \;|\; (\frakd,\ff) \in \calD\}.
\end{align*}
Then $\calDroot$ is also consistent.
\end{lemma}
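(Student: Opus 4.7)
The plan is to deduce consistency of $\calDroot$ from consistency of $\calD$ in two stages: first checking that the total monodromies agree on the subring $\C[\MM']\ll t\rr \subset \C[\MM]\ll t\rr$, then bootstrapping to all of $\C[\MM]\ll t\rr$ via the finite index of $\MM' \subset \MM$.

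For the wall-level identity, fix a wall $(\frakd, \ff) \in \calD$ and let $\nn_{\MM'} \in \NN'$ and $\nn_\MM \in \NN$ denote the primitive generators of $\ann_{\NN'}(\frakd)$ and $\ann_\NN(\frakd)$ respectively, with signs fixed by the orientation of a chosen transverse path $\ga$. The definition of $\nu(\frakd)$ in~\eqref{eq:nu_def} amounts precisely to $\nn_\MM = \nu(\frakd)\,\nn_{\MM'}$ inside $\NN_\R = \NN'_\R$, so for every $\mm \in \MM'$ one has
\begin{align*}
(\ff^{1/\nu(\frakd)})^{\lan \nn_\MM,\,\mm\ran} \;=\; \ff^{\lan \nn_{\MM'},\,\mm\ran}.
\end{align*}
In other words, the wall-crossing monodromy attached to $(\frakd, \ff^{1/\nu(\frakd)}) \in \calDroot$ acts on $z^\mm$ in exactly the same way as the wall-crossing monodromy attached to $(\frakd, \ff) \in \calD$. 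Composing over all walls met by a loop $\ga$ in $\MM_\R \setminus \{\vec 0\}$ transverse to both diagrams thus gives $\theta_\ga^{\calDroot}|_{\C[\MM']\ll t\rr} = \theta_\ga^{\calD}$, which is the identity by consistency of $\calD$.

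To upgrade this equality to all of $\C[\MM]\ll t\rr$, note that each wall-crossing in $\calDroot$ acts diagonally on monomials, so $\theta_\ga^{\calDroot}(z^\mm) = c_\mm\,z^\mm$ for some $c_\mm \in 1 + t\C\ll t\rr$. Setting $n := \ind(\MM' \subset \MM)$, every $n\mm$ lies in $\MM'$, and the ring-automorphism property gives $c_\mm^n = c_{n\mm} = 1$. Since $1 + t\C\ll t\rr$ contains no nontrivial $n$-th roots of unity (a short argument on the leading nonzero $t$-coefficient of $c_\mm - 1$), we conclude $c_\mm = 1$ for every $\mm \in \MM$, and $\calDroot$ is consistent.

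The only real preliminary to dispatch is that $\ff^{1/\nu(\frakd)}$ qualifies as a wall label in $\MM_\R$ in the first place: writing $\mm_\MM$ for the primitive generator of $\frakd$ in $\MM$, the hypothesis $\ff \in 1 + t\C[z^{\mm'}]\ll t\rr \subset 1 + t\C[z^{\mm_\MM}]\ll t\rr$ combined with the binomial series gives $\ff^{1/\nu(\frakd)} \in 1 + t\C[z^{\mm_\MM}]\ll t\rr$, and the mod-$t^k$ finiteness condition is plainly preserved. No step looks genuinely difficult — the proof rests on the essentially tautological identity $\nn_\MM = \nu(\frakd)\nn_{\MM'}$, and everything else is bookkeeping.
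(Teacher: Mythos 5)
Your proof follows the paper's two-step strategy exactly: wall-level agreement of the monodromies on $\C[\MM']\ll t\rr$ via the identity $\nn_\MM = \nu(\frakd)\,\nn_{\MM'}$, followed by a bootstrap to $\C[\MM]\ll t\rr$ using the finite index of $\MM'$ in $\MM$. One imprecision to fix: the prefactor in $\theta_\ga^{\calDroot}(z^\mm) = c_\mm\, z^\mm$ lies in $1 + t\,\C[\MM]\ll t\rr$, not $1 + t\,\C\ll t\rr$, since each wall crossing multiplies $z^\mm$ by $\ff^{\lan\nn,\mm\ran}$, a power series in $z$-monomials rather than a scalar; this does not affect your conclusion, because $\C[\MM]\ll t\rr$ is still a characteristic-zero integral domain and hence $1 + t\,\C[\MM]\ll t\rr$ has no nontrivial $n$-th roots of unity by the same leading-$t$-coefficient argument (the paper phrases this step without factoring out $z^\mm$, working directly with the equation $(\theta_\ga^{\calDroot}(z^\mm))^K = (z^\mm)^K$, which sidesteps the issue).
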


\NI In particular, since $\ks(\calD)$ is consistent for any scattering diagram $\calD$ in $\MM_\R'$, we have that $\ks(\calD)_\root$ is a consistent scattering diagram in $\MM_\R$.
Then $\ks(\calD)_\root$ and $\ks(\calD_\root)$ must be equivalent since they are both consistent and have the same incoming walls, and so we have
\begin{align}\label{eq:min_and_root}
\ks(\calDroot)_\min = \big(\ks(\calD)_\root \big)_\min = \big(\ks(\calD)_\min \big)_\root.
\end{align}

\begin{proof}[Proof of Lemma~\ref{lem:subscatter}]
Suppose that $\ga: [0,1] \ra \MM_\R'$ is a smooth loop which intersects each wall of $\calD$ transversely.
Let $(\frakd,\ff)$ be a wall of $\calD$ such that $\ga$ intersects $\frakd$ at some $t_0 \in (0,1)$.
Let $\nn \in \NN$ (resp. $\nn' \in \NN'$) be the unique primitive element which vanishes on $\frakd$ and pairs positively with $\ga'(0)$, so that we have
$\nn = \nu(\frakd) \nn'$.
Then for any $\mm' \in \MM'$ we have
\begin{align*}
\theta_{\ga,t_0}^{(\frakd,\ff^{1/\nu(\frakd)}),\calDroot}(z^{\mm'}) &= 
(\ff^{1/\nu(\frakd)})^{\lan \nn,\mm'\ran} z^{\mm'}
\\&  = (\ff^{1/\nu(\frakd)})^{\nu(\frakd)\lan \nn',\mm'\ran}z^{\mm'} \\&= \ff^{\lan \nn',\mm'\ran}z^{\mm'}  \\&= \theta_{\ga,t_0}^{(\frakd,\ff),\calD}(z^{\mm'}).
\end{align*}
It follows that $\theta_{\ga}^{\calDroot}(z^{\mm'}) = \theta_{\ga}^{\calD}(z^{\mm'}) = z^{\mm'}$ for any $\mm' \in \MM'$.

To conclude that $\theta_{\ga}^{\calDroot} = \1 \in \aut_{\C\ll t \rr}(\C[\MM]\ll t \rr)$, note that for any $\mm \in \MM$ we have $K\mm \in \MM'$ for some $K \in \Z_{\geq 1}$, so
\begin{align*}
(\theta_{\ga}^{\calDroot}(z^{\mm}))^K = \theta_{\ga}^{\calDroot}(z^{K\mm}) = z^{K\mm} = (z^{\mm})^K,
\end{align*}
and hence $\theta_{\ga}^{\calDroot}(z^{\mm}) = z^{\mm}$ since $\theta_{\ga}^{\calDroot} = \1$ modulo $t$.
\end{proof}

There is also a straightforward notion of isomorphism of scattering diagrams which plays well with the Kontsevich--Soibelman algorithm.
Given a lattice isomorphism $\phi: \MM_1 \xrightarrow{\cong} \MM_2$, let $\phi_\R: (\MM_1)_\R \ra (\MM_2)_\R$ denote the induced isomorphism of real vector spaces.
For an oriented ray $\frakd \subset (\MM_1)_\R$, we endow the corresponding ray $\phi_\R(\frakd) \subset (\MM_2)_\R$ with its induced orientation.
For $\mm \in \MM_1$ nonzero and $\ff \in \C[z^\mm]\ll t\rr$, we obtain an element $\phi_*(\ff) \in \C[z^{\phi(\mm)}]\ll t \rr$ by replacing each instance of $z^\mm$ with $z^{\phi(\mm)}$.
The following is more or less immediate from the definitions:

\begin{lemma}\label{lem:scatter_coord_ch}
Let $\phi: \MM_1 \xrightarrow{\cong} \MM_2$ be an isomorphism of rank two lattices, let $\calD_1$ be a scattering diagram in $(\MM_1)_\R$ which is consistent, and let $\phi_*(\calD_1)$ be the scattering diagram in $(\MM_2)_\R$ defined by
\begin{align*}
\phi_*(\calD_1) := \{(\phi_\R(\frakd),\phi_*(\ff)\;|\; (\frakd,\ff) \in \calD_1 \}.
\end{align*}
Then $\phi_*(\calD_1)$ is also consistent.
\end{lemma}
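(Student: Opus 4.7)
The proof will proceed by transporting everything along $\phi$. The plan is to exploit the naturality of the wall-crossing monodromy formula \eqref{eq:monod} under lattice isomorphisms, so that a consistency check for $\phi_*(\calD_1)$ in $(\MM_2)_\R$ reduces to the (known) consistency of $\calD_1$ in $(\MM_1)_\R$.

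First I would set up the dual picture. The isomorphism $\phi: \MM_1 \xrightarrow{\cong} \MM_2$ induces a dual isomorphism $\phi^*: \NN_2 \xrightarrow{\cong} \NN_1$ characterized by $\lan \phi^*(\nn),\mm\ran = \lan \nn,\phi(\mm)\ran$ for all $\nn \in \NN_2$, $\mm \in \MM_1$. It also induces a $\C\ll t\rr$-algebra isomorphism $\Phi: \C[\MM_1]\ll t\rr \xrightarrow{\cong} \C[\MM_2]\ll t\rr$ sending $z^\mm \mapsto z^{\phi(\mm)}$, and by construction $\Phi(\ff) = \phi_*(\ff)$ for any $\ff \in \C[z^\mm]\ll t\rr$. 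In particular, $\Phi$ carries the wall labels of $\calD_1$ bijectively to those of $\phi_*(\calD_1)$, and $\phi_\R$ carries rays to rays with matching orientations.

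Next I would verify that individual wall-crossing monodromies correspond under $\Phi$. Let $\ga_2: [0,1] \ra (\MM_2)_\R \setminus \{\vec{0}\}$ be a smooth immersion transverse to the walls of $\phi_*(\calD_1)$, and set $\ga_1 := \phi_\R^{-1} \circ \ga_2$, which is then transverse to the walls of $\calD_1$. If $\ga_1$ meets the wall $(\frakd,\ff) \in \calD_1$ at time $t_0$, with primitive $\nn_1 \in \NN_1$ vanishing on $\frakd$ and pairing positively with $\ga_1'(t_0)$, then $\nn_2 := (\phi^*)^{-1}(\nn_1) \in \NN_2$ is primitive, vanishes on $\phi_\R(\frakd)$, and pairs positively with $\ga_2'(t_0) = \phi_\R(\ga_1'(t_0))$. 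For any $\mm \in \MM_1$ we have
\begin{align*}
\theta_{\ga_2,t_0}^{(\phi_\R(\frakd),\phi_*(\ff)),\phi_*(\calD_1)}(z^{\phi(\mm)}) &= \phi_*(\ff)^{\lan \nn_2,\phi(\mm)\ran} z^{\phi(\mm)} = \Phi\big( \ff^{\lan \nn_1,\mm\ran} z^\mm \big) = \Phi\big(\theta_{\ga_1,t_0}^{(\frakd,\ff),\calD_1}(z^\mm)\big),
\end{align*}
so $\theta_{\ga_2,t_0}^{(\phi_\R(\frakd),\phi_*(\ff)),\phi_*(\calD_1)} = \Phi \circ \theta_{\ga_1,t_0}^{(\frakd,\ff),\calD_1} \circ \Phi^{-1}$.

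Finally, composing these wall-crossing identifications in order along $\ga_2$ (equivalently along $\ga_1$), and noting that each order-$k$ truncation involves only finitely many nontrivial factors, gives $\theta_{\ga_2}^{\phi_*(\calD_1)} = \Phi \circ \theta_{\ga_1}^{\calD_1} \circ \Phi^{-1}$. When $\ga_2$ is a loop, so is $\ga_1$, and consistency of $\calD_1$ gives $\theta_{\ga_1}^{\calD_1} = \1$, hence $\theta_{\ga_2}^{\phi_*(\calD_1)} = \1$, proving consistency of $\phi_*(\calD_1)$. There is no real obstacle here: the only point requiring minor care is the equality $\lan \phi^*(\nn_2),\mm\ran = \lan \nn_2,\phi(\mm)\ran$ together with the observation that $\phi^*$ sends primitive vectors to primitive vectors, which together ensure the $\nu$-factors of Lemma~\ref{lem:subscatter} do not appear in this situation.
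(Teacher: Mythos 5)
The paper itself gives no proof for this lemma, remarking only that it is ``more or less immediate from the definitions.'' Your argument is the correct and complete unwinding of that claim: transporting everything via the algebra isomorphism $\Phi$ and the dual isomorphism $\phi^*$, checking that primitivity and sign conditions on the annihilating covector are preserved, concluding that wall-crossing monodromies (and hence total path monodromies) are conjugated by $\Phi$, so that triviality around loops is preserved.
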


\begin{example}\label{ex:two_lemmas_main_ex}
Consider the lattices $\MM = \Z^2$ and $\MM' = \langle \mm_1,\mm_2\rangle = \Z \times 3\Z$ for $\mm_1 = (1,0)$ and $\mm_2 = (-1,-3)$,
and let $\calD$ denote $\calD_{\mm_1,\mm_2}^{3,3}$, but viewed as a scattering diagram in $\MM'_\R$, i.e. with respect to the lattice $\MM'$.
Then $\calDroot$ is the basic scattering diagram $\calD_{\mm_1,\mm_2}^{1,1}$ in $\MM_\R = \R^2$, and by \eqref{eq:min_and_root} we have
$\ks(\calD_{\mm_1,\mm_2}^{1,1})_\min  = \big(\ks(\calD)_\min\big)_\root$.
Thus in order to compute $\ks(\calD_{\mm_1,\mm_2}^{1,1})_\min$ it essentially suffices to compute $\ks(\calD)_\min$.
In turn, using the isomorphism $\phi: \MM' \ra \Z^2$ sending $\mm_i$ to $e_i$ for $i=1,2$, Lemma~\ref{lem:scatter_coord_ch} reduces $\ks(\calD)_\min$ to the basic scattering diagram $\ks(\calD_{\phi(\mm_1),\phi(\mm_2)}^{3,3})_\min = \ks(\calD_{e_1,e_2}^{3,3})_\min$ in $\R^2$.
\end{example}

Generalizing Example~\ref{ex:two_lemmas_main_ex}, we will typically apply the above lemmas as follows.
Put $\MM := \Z^2$, and let $\calD_{\mm_1,\dots,\mm_J}^{\ell_1,\dots,\ell_J}$ denote the basic scattering diagram in $\MM_\R = \R^2$ specified by some primitive vectors $\mm_1,\dots,\mm_J \in \Z^2$ and positive integers $\ell_1,\dots,\ell_J \in \Z_{\geq 1}$ as in Definition~\ref{def:basic_sd}.
Let $\MM' \subset \MM$ be a sublattice containing $\mm_1,\dots,\mm_J$, and let $\calD$ be the scattering diagram in $\MM'_\R$ given by
\begin{align*}
\calD = \{(\R_{\leq 0} \cdot \mm_i,(1+tz^{\mm_i})^{\nu(\mm_i)\ell_i}) \; | \; i = 1,\dots,J\}.
\end{align*}
According to Lemma~\ref{lem:subscatter}, the scattering diagram $\ks(\calD)_\min$ straightforwardly determines $\ks(\calD_{\mm_1,\dots,\mm_J}^{\ell_1,\dots,\ell_J})_\min$ and vice versa.
Moreover, after choosing a lattice isomorphism $\phi: \MM' \xrightarrow{\cong} \Z^2$, we can identify $\calD$ via Lemma~\ref{lem:scatter_coord_ch} with another basic scattering diagram
$\calD_{\phi(\mm_1),\dots,\phi(\mm_J)}^{\nu(\mm_1)\ell_1,\dots,\nu(\mm_J)\ell_J}$.

In the particular case $J=2$, we can take  $\MM'$ to be the sublattice  $\lan \mm_1,\mm_2 \ran$ of $\MM = \Z^2$, and $\phi: \MM' \ra \Z^2$ to be the lattice isomorphism with $\phi(\mm_i) = e_i$ for $i=1,2$.
Note that each outgoing ray appearing in $\ks(\calD_{\mm_1,\mm_2}^{\ell_1,\ell_2})_\min$ is necessarily of the form $\R_{\geq 0} \cdot \mm$ for some $\mm = a\mm_1 + b\mm_2 \in \Z^2$ with $(a,b) \in \Z_{\geq 0}^2$. 
Let $\ff_{\mm_1,\mm_2}^{\ell_1,\ell_2}(\mm) \in \C[z^\mm]\ll t \rr$ denote the label attached to the ray $\R_{\geq 0}\cdot \mm$ in $\ks(\calD_{\mm_1,\mm_2}^{\ell_1,\ell_2})_\min$. 
Combining Lemmas \ref{lem:subscatter} and \ref{lem:scatter_coord_ch} then gives:
\begin{cor}\label{cor:f_change_formula}
For any $\ell_1,\ell_2 \in \Z_{\geq 1}$, primitive noncolinear $\mm_1,\mm_2 \in \Z^2$, and $a,b \in \Z_{\geq 0}$, we have
\begin{align}\label{eq:comp_with_std_sd}
\ff_{\mm_1,\mm_2}^{\ell_1,\ell_2}(a\mm_1 + b\mm_2) = \left(\phi_*^{-1}(\ff_{e_1,e_2}^{\nu(\mm_1)\ell_1,\nu(\mm_2)\ell_2}(a,b))\right)^{1/\nu(a\mm_1 + b\mm_2)},
\end{align}
where $\phi_*^{-1}(\ff_{e_1,e_2}^{\nu(\mm_1)\ell_1,\nu(\mm_2)\ell_2}(a,b))$ is given by replacing each instance of $z^{(a,b)}$ in $\ff_{e_1,e_2}^{\nu_1\ell_1,\nu_2\ell_2}(a,b)$ with $z^{a\mm_1+b\mm_2}$.
\end{cor}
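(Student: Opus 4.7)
The plan is to chain together Lemmas~\ref{lem:subscatter} and \ref{lem:scatter_coord_ch} through an intermediate scattering diagram sitting on the sublattice $\MM' := \lan \mm_1,\mm_2\ran \subset \MM := \Z^2$. Set
\begin{align*}
\calD := \big\{\big(\R_{\leq 0} \cdot \mm_i,\,(1+tz^{\mm_i})^{\nu(\mm_i)\ell_i}\big)\big\}_{i=1,2}
\end{align*}
viewed as a scattering diagram in $\MM'_\R$, and let $\phi : \MM' \xrightarrow{\cong} \Z^2$ be the lattice isomorphism with $\phi(\mm_i) = e_i$.

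First I would apply the coordinate-change Lemma~\ref{lem:scatter_coord_ch}. By construction $\phi_*(\calD) = \calD_{e_1,e_2}^{\nu(\mm_1)\ell_1,\nu(\mm_2)\ell_2}$, and since $\phi_*(\ks(\calD)_\min)$ is a consistent scattering diagram in $\R^2$ extending $\phi_*(\calD)$, uniqueness of the minimal consistent completion forces
\begin{align*}
\ks\big(\calD_{e_1,e_2}^{\nu(\mm_1)\ell_1,\nu(\mm_2)\ell_2}\big)_\min = \phi_*\big(\ks(\calD)_\min\big).
\end{align*}
Reading off the outgoing wall along $\phi_\R(\R_{\geq 0}(a\mm_1+b\mm_2)) = \R_{\geq 0}(a,b)$, if $g \in \C[z^{a\mm_1 + b\mm_2}]\ll t\rr$ denotes the label on the ray $\R_{\geq 0}(a\mm_1+b\mm_2)$ in $\ks(\calD)_\min$, we obtain $g = \phi_*^{-1}\big(\ff_{e_1,e_2}^{\nu(\mm_1)\ell_1,\nu(\mm_2)\ell_2}(a,b)\big)$.

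Next I would apply the root Lemma~\ref{lem:subscatter} to pass from $\MM'_\R$ back to $\MM_\R$. Since $\nu(\mm) = \nu(c\mm)$ for any nonzero $c \in \R$ (the annihilators depend only on the underlying line), we have $\nu(\R_{\leq 0}\mm_i) = \nu(\mm_i)$, so the $\nu$th-root operation turns the incoming labels of $\calD$ back into $(1 + tz^{\mm_i})^{\ell_i}$; that is, $\calD_\root = \calD_{\mm_1,\mm_2}^{\ell_1,\ell_2}$. As noted in \eqref{eq:min_and_root}, uniqueness of the minimal consistent completion yields $\ks(\calD_{\mm_1,\mm_2}^{\ell_1,\ell_2})_\min = (\ks(\calD)_\min)_\root$. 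Reading off the label on the ray $\R_{\geq 0}(a\mm_1+b\mm_2)$ gives
\begin{align*}
\ff_{\mm_1,\mm_2}^{\ell_1,\ell_2}(a\mm_1 + b\mm_2) = g^{\,1/\nu(a\mm_1 + b\mm_2)},
\end{align*}
and substituting the expression for $g$ from the previous step yields the claimed formula.

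The argument is almost purely formal once the two lemmas are available; the only bookkeeping subtlety is keeping track of which $\nu$ goes with which ray. The incoming rays use $\nu(\mm_i)$ (so their labels survive the root operation with exponent $\ell_i$), while the outgoing ray is rooted by $\nu(a\mm_1+b\mm_2)$, which as Example~\ref{ex:m_1_m_2_nu} illustrates can differ dramatically from $\nu(\mm_i)$. I do not expect any genuine obstacle beyond this verification.
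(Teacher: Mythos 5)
Your proof is correct and follows exactly the route the paper has in mind: build the intermediate diagram $\calD$ on the sublattice $\MM'=\lan\mm_1,\mm_2\ran$, identify it with $\calD_{e_1,e_2}^{\nu(\mm_1)\ell_1,\nu(\mm_2)\ell_2}$ via Lemma~\ref{lem:scatter_coord_ch}, then pass back to $\MM=\Z^2$ via Lemma~\ref{lem:subscatter} and \eqref{eq:min_and_root}, noting that $\nu(\R_{\leq0}\mm_i)=\nu(\mm_i)$ so the incoming labels come out as $(1+tz^{\mm_i})^{\ell_i}$. The paper itself leaves this as an immediate consequence of the two lemmas (``Combining Lemmas \ref{lem:subscatter} and \ref{lem:scatter_coord_ch} then gives''), and your writeup just makes the bookkeeping explicit.
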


Because $\coef_{\ks(\calD_{\mm_1,\mm_2}^{\ell_1,\ell_2})_\min}(z^{\mm})$ is defined in \eqref{eq:coef} as a coefficient in the function $\log \ff_\mm$, we may conclude:

\begin{cor}\label{cor:COL_calD}
In the context of Corollary~\ref{cor:f_change_formula}, we have 
\begin{align*}
\coef_{\ks(\calD_{\mm_1,\mm_2}^{\ell_1,\ell_2})_\min}(z^{\mm}) = 
\begin{cases}
\tfrac{1}{\nu(\mm)}\; \coef_{\ks(\calD_{e_1,e_2}^{\nu(\mm_1)\ell_1,\nu(\mm_2)\ell_2})_\min}(z^{(a,b)}) & \text{if}\;\;\mm = a\mm_1 + b\mm_2 \\   
0 & \text{if}\;\;\mm \in \MM' \setminus \MM.
\end{cases}
\end{align*}  
\end{cor}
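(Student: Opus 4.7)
The plan is to take formal logarithms in the identity of Corollary~\ref{cor:f_change_formula}, extract the coefficient of $z^\mm$ on each side, and invoke the definition \eqref{eq:coef}. The two branches of the stated formula correspond to whether or not $\mm$ lies in the sublattice $\MM' = \langle \mm_1, \mm_2\rangle$.

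For the main case, write $\mm = a\mm_1 + b\mm_2$ with $a,b \in \Z_{\geq 0}$. Both sides of \eqref{eq:comp_with_std_sd} are congruent to $1$ modulo $t$, so the formal logarithm applies; since $\log$ commutes with $\nu$-th roots and with the ring isomorphism $\phi_*^{-1}$, this yields
\begin{align*}
\log \ff_{\mm_1,\mm_2}^{\ell_1,\ell_2}(\mm) \;=\; \tfrac{1}{\nu(\mm)}\,\phi_*^{-1}\!\Bigl(\log \ff_{e_1,e_2}^{\nu(\mm_1)\ell_1,\nu(\mm_2)\ell_2}(a,b)\Bigr).
\end{align*}
Because $\phi_*^{-1}$ sends $z^{(a,b)}$ to $z^{a\mm_1 + b\mm_2} = z^\mm$, extracting the $z^\mm$-coefficient of each side and applying \eqref{eq:coef} to name both coefficients then yields the first branch of the formula.

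For the remaining case (to be read as $\mm \in \MM \setminus \MM'$, since $\MM' \subset \MM$), the key observation is that every wall label appearing in $\ks(\calD_{\mm_1,\mm_2}^{\ell_1,\ell_2})_\min$ lies in the subring $\C[\MM']\ll t \rr$ of $\C[\MM]\ll t\rr$. Indeed, the initial labels $(1 + tz^{\mm_i})^{\ell_i}$ involve only monomials $z^{k\mm_i} \in \C[\MM']$, and the Kontsevich--Soibelman algorithm builds every new label inductively by composing wall-crossing monodromies \eqref{eq:monod}, whose action multiplies existing monomials by integer powers of existing labels and hence stays in $\C[\MM']\ll t \rr$. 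It follows that $\log \ff_{\mm_0}$ has vanishing $z^\mm$-coefficient whenever $\mm \notin \MM'$, so $\coef_{\ks(\calD_{\mm_1,\mm_2}^{\ell_1,\ell_2})_\min}(z^\mm) = 0$.

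I do not foresee any serious obstacle; the argument is essentially an unpacking of definitions. The only bookkeeping subtlety is matching the primitive generator of the ray $\R_{\geq 0}\cdot\mm$ consistently across the two lattices, which is automatic because $\phi_*^{-1}$ sends the set of monomials $\{z^{k(a,b)}\}_{k \geq 1}$ bijectively onto $\{z^{k(a\mm_1+b\mm_2)}\}_{k \geq 1}$, preserving the primitivity labeling relevant to \eqref{eq:coef}.
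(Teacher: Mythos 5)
Your proof of the main branch is correct and coincides with the paper's (unwritten) argument: take $\log$ in \eqref{eq:comp_with_std_sd}, use that $\log$ turns the $\nu(\mm)$-th root into division by $\nu(\mm)$ and commutes with the ring isomorphism $\phi_*^{-1}$, then read off the $z^\mm$-coefficient and match it with the definition \eqref{eq:coef}; the matching is legitimate because both $\ff_{\mm_1,\mm_2}^{\ell_1,\ell_2}(\mm)$ and $\ff_{\mm_0}$ (for $\mm_0$ the primitive multiple of $\mm$) denote the same ray label, and likewise on the $\Z^2$ side. You also correctly read $\MM'\setminus\MM$ as a typo for $\MM\setminus\MM'$.

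For the second branch, your conclusion -- that every wall function in $\ks(\calD_{\mm_1,\mm_2}^{\ell_1,\ell_2})_\min$ lies in $\C[\MM']\ll t\rr$ -- is exactly what is needed, but the stated justification is a little loose: the Kontsevich--Soibelman algorithm does not build new labels ``by composing wall-crossing monodromies''; rather it adds walls to cancel the logarithm of the total monodromy order by order in $t$. To make your argument precise, one can observe that each wall-crossing automorphism \eqref{eq:monod} with label in $\C[\MM']\ll t\rr$ preserves the $\MM/\MM'$-grading on $\C[\MM]\ll t\rr$, hence so does the total monodromy, and therefore the rays and labels produced at each step of the algorithm are supported on $\MM'$. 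But the cleanest route is the one the paper has already supplied in \eqref{eq:min_and_root}: $\ks(\calD_{\mm_1,\mm_2}^{\ell_1,\ell_2})_\min = \big(\ks(\calD)_\min\big)_\root$ for $\calD$ the corresponding diagram over the sublattice $\MM'$, so every label of the former is the $\nu(\frakd)$-th root of a series in $\C[z^{\mm'}]\ll t\rr$ with $\mm'\in\MM'$, hence itself lies in $\C[\MM']\ll t\rr$. Either way the second branch follows, so the overall argument is correct and essentially the paper's.
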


As a consequence, given a toric model of a Looijenga pair $(X,\calN)$ with $J=2$ incoming walls, we can read off counts $\wpcount_{X,\calN}(p,q)$ of well-placed curves as in \eqref{eq:p_q_GW_count_def} by applying the Kontsevich--Soibelman algorithm to  particularly simple basic scattering diagrams of the form studied in e.g. \cite{gross2010quivers,grafnitz2023scattering}.

\begin{cor}\label{cor:N_logcoef_form}
Suppose that $(X,\calN)$ is a uninodal Looijenga pair which has a toric model $\tormod$ with $J=2$, with data $\mm_1,\mm_2,\ell_1,\ell_2$ as in Notation~\ref{not:tormod_m_and_l}.  
Then for any coprime $p,q \in \Z_{\geq 1}$ we have
\begin{align*}
\wpcount_{X,\calN}(p,q) &= 
\begin{cases}
  \tfrac{1}{\nu(\wp_X(p,q))}\,\coef_{\ks(\calD_{e_1,e_2}^{\nu(\mm_1)\ell_1,\nu(\mm_2)\ell_2})_\min}(z^{\phi(\wp_{X}(p,q)}))|_{t=1} & \text{if}\;\; \wp_{X}(p,q) \in \lan \mm_1,\mm_2\ran\\
  0 &\text{if}\;\; \wp_{X}(p,q) \notin \lan \mm_1,\mm_2\ran,
\end{cases} 
\end{align*}
where $\phi$ is the linear map $\R^2 \ra \R^2$ with $\phi(\mm_i) = e_i$ for $i=1,2$.
\end{cor}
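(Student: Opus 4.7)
The plan is to derive this corollary directly from the definition \eqref{eq:p_q_GW_count_def} of $\wpcount_{X,\calN}(p,q)$ together with the change-of-lattice reduction in Corollary~\ref{cor:COL_calD}; no further geometric or scattering-theoretic input will be needed.

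First I would unpack the definitions. Since the toric model $\tormod$ has $J = 2$ incoming walls with numerical data $\mm_1,\mm_2,\ell_1,\ell_2$, equation \eqref{eq:D_tormod} identifies $\calD_\tormod$ with the basic scattering diagram $\calD_{\mm_1,\mm_2}^{\ell_1,\ell_2}$ in $\R^2$. Substituting this into \eqref{eq:p_q_GW_count_def} yields
\[
\wpcount_{X,\calN}(p,q) = \coef_{\ks(\calD_{\mm_1,\mm_2}^{\ell_1,\ell_2})_\min}(z^{\wp_X(p,q)})\big|_{t=1}.
\]

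The second step is to invoke Corollary~\ref{cor:COL_calD} with $\mm := \wp_X(p,q) \in \MM = \Z^2$. If $\wp_X(p,q) \notin \lan \mm_1, \mm_2\ran$, the corollary says the coefficient vanishes identically in $t$, which matches the second case of the statement. Otherwise $\wp_X(p,q) = a\mm_1 + b\mm_2$ for unique integers $a,b$, and the corollary rewrites
\[
\coef_{\ks(\calD_{\mm_1,\mm_2}^{\ell_1,\ell_2})_\min}(z^{\wp_X(p,q)}) = \tfrac{1}{\nu(\wp_X(p,q))}\,\coef_{\ks(\calD_{e_1,e_2}^{\nu(\mm_1)\ell_1,\nu(\mm_2)\ell_2})_\min}(z^{(a,b)}).
\]
Since $\phi$ is the linear map with $\phi(\mm_i) = e_i$, I have $\phi(\wp_X(p,q)) = a\,e_1 + b\,e_2 = (a,b)$, so $z^{(a,b)} = z^{\phi(\wp_X(p,q))}$, and specializing at $t = 1$ recovers the first case of the statement.

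Because the genuine work has already been carried out in establishing Corollary~\ref{cor:COL_calD}, there is no real obstacle here — the present statement is essentially a rewording of its predecessor in the form used later (with $\wp_X(p,q)$ as input rather than an arbitrary $\mm \in \Z^2$), which is what is needed to connect well-placed curve counts in the Looijenga pair $(X,\calN)$ with the standard basic scattering diagrams studied in \cite{gross2010quivers,grafnitz2023scattering}. The only bookkeeping item is to verify $\phi(\wp_X(p,q)) = (a,b)$ when $\wp_X(p,q) = a\mm_1 + b\mm_2$, which is immediate from the definition of $\phi$.
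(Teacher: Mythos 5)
Your proof is correct and takes essentially the same route the paper intends: the corollary is meant to be read off by substituting the definition \eqref{eq:p_q_GW_count_def} of $\wpcount_{X,\calN}(p,q)$ into the change-of-lattice formula of Corollary~\ref{cor:COL_calD} and noting $\phi(\wp_X(p,q)) = (a,b)$. (As a side note, the second case of Corollary~\ref{cor:COL_calD} has what appears to be a typo, $\mm \in \MM' \setminus \MM$ rather than $\mm \in \MM \setminus \MM'$; your reading $\wp_X(p,q) \notin \lan\mm_1,\mm_2\ran$ is the intended one.)
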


\begin{example}\label{ex:nu_CP2}
Recall that $(\CP^2,\calN_0)$ has a toric model $\tormod_{\CP^2}$ with $\mm_1 = (1,0), \mm_2 = (-1,-3)$ and $\ell_1 = \ell_2 = 1$, where $\calN_0 =  \{x^3 + y^3 = xyz\}$ is our standard nodal cubic.
 Continuing Examples ~\ref{ex:m_1_m_2_nu} and \ref{ex:two_lemmas_main_ex}, observe that for any primitive $\mm = (i,j) \in \Z^2$ we have 
\begin{align*}
\nu(\mm) = 
\begin{cases}
  3 &  \text{if}\;\;j \equiv 0 \;\text{mod}\; 3\\ 
  1 & \text{otherwise},
\end{cases}
\end{align*}
and in fact $\nu(\wp_{\CP^2}(p,q)) = 3$ for any coprime $p,q \in \Z_{\geq 1}$ with $p+q$ divisible by $3$.
Together with Corollary~\ref{cor:N_logcoef_form}, we then have
\begin{align}
N_{\CP^2,\calN_0}(p,q) = \tfrac{1}{3} \,\coef_{\ks(\calD_{e_1,e_2}^{3,3})_\min}(z^{\phi(\wp_{\CP^2}(p,q))})|_{t=1},
\end{align}
where $\phi$ is represented by the matrix 
$\tfrac{1}{3}\begin{psmallmatrix}
3 & -1 \\ 0 &-1
\end{psmallmatrix}
= \begin{psmallmatrix}
1 & -1 \\ 0 & -3
\end{psmallmatrix}^{-1}$. 
\end{example}

\begin{rmk} 
In principle we could extend the count $N_{X,\calN}(p,q)$ of $(p,q)$-well-placed curves to the case with $\gcd(p,q) > 1$ using \eqref{eq:p_q_GW_count_def}, although this generally involves multiple covers and possibly more degenerate configurations.
A noteworthy special case is when $(p,q) = (\ka,0)$ (or equivalently $(0,\ka)$) for some $\ka \in \Z_{\geq 1}$, interpreted as in Convention~\ref{conv:pq=0} as the count of rational algebraic curves in $X$ which intersect $\calN$ in one nonsingular point with contact order $\ka$.
For instance, in the case $X = \CP^2$, we have $\wp_{\CP^2}(3\ka,0) = (0,-3\ka)$, which is sent to $(\ka,\ka)$ by the map $\phi$ from Example~\ref{ex:nu_CP2}.
According to a suitable extension of Corollary~\ref{cor:N_logcoef_form}, $N_{\CP^2,\calN_0}(3\ka,0)$ can be read off from the label $\ff_{e_1,e_2}^{3,3}(1,1)$ of the ray $\R_{\geq 0} \cdot (1,1)$ in $\ks(\calD_{e_1,e_2}^{3,3})_\min$.
As it happens, an explicit formula for $\ff_{e_1,e_2}^{\ell,\ell}(1,1)$ for any $\ell \in \Z_{\geq 1}$ was conjectured by Gross and Kontsevich and proved by Reineke in \cite{reineke2011cohomology}.
Incidentally, the analogous count in $(\CP^2,E)$ with $E$ a nonsingular elliptic curve has received much recent attention in the form of Takahashi's conjecture (see e.g. \cite{bousseau2021scattering}).
\end{rmk}

\subsection{Scattering positivity results}\label{subsec:scat_pos}

Basic scattering diagrams of the form $\ks(\calD_{e_1,e_2}^{\ell_1,\ell_2})_\min$ for $\ell_1,\ell_2 \in \Z_{\geq 1}$ were discussed in detail in \cite{gross2010quivers}, and studied empirically based on computer calculations in e.g. \cite[Ex. 1.6]{gross2010tropical} and \cite[Ex. 1.15]{GHKK2018}.
In particular, \cite[\S4]{gross2010quivers} gives a complete conjectural picture for the \hl{scattering pattern} of these scattering diagrams, i.e. the set of all rays with nontrivial function labels. 
Note that, apart from the incoming rays $\R_{\leq 0}\cdot (1,0)$ and $\R_{\leq 0} \cdot (0,1)$ and the outgoing rays $\R_{\geq 0} \cdot (1,0)$ and $\R_{\geq 0} \cdot (0,1)$, all other rays of $\ks(\calD_{e_1,e_2}^{\ell_1,\ell_2})_\min$ are outgoing with positive rational slope.

We will restrict to the case $\ell_1\ell_2 > 4$, since the remaining cases are much simpler and not directly relevant for us.
Let 
\begin{align}\label{eq:xipm}
\xi^{\ell_1,\ell_2}_\pm := \tfrac{\ell_2}{2}\left(1 \pm \sqrt{1 - \frac{4}{\ell_1\ell_2}} \right) \in \R_{>0}
\end{align}
 denote the roots of the polynomial $R_{\ell_1,\ell_2}(t) = \tfrac{1}{\ell_2}t^2 - t + \tfrac{1}{\ell_1}$, and define the \hl{dense region} of $\ks(\calD_{e_1,e_2}^{\ell_1,\ell_2})_\min$ in $\R^2$ to be the set of all $(a,b) \in \R_{>0}^2$ satisfying 
$\xi_-^{\ell_1,\ell_2} < b/a < \xi_+^{\ell_1,\ell_2}$.
More generally, for any primitive noncolinear $\mm_1,\mm_2 \in \Z^2$, we define the dense region of $\ks(\calD_{\mm_1,\mm_2}^{\ell_1,\ell_2})_\min$ to be the preimage of the dense region of $\ks(\calD_{e_1,e_2}^{\nu(\mm_1)\ell_1,\nu(\mm_2)\ell_2})_\min$ under the map $\phi: \R^2 \ra \R^2$ sending $\mm_i$ to $e_i$ for $i=1,2$.

According to \cite[Thm. 5]{gross2010quivers}, those primitive $(a,b) \in \Z_{\geq 1}^2$ such that the ray $\R_{\geq 0} \cdot (a,b)$ appears in the scattering pattern of $\ks(\calD_{e_1,e_2}^{\ell_1,\ell_2})_\min$ and lies {\em outside} of the dense region are precisely of the form $\Tsym_2(1,0),\Tsym_1(\Tsym_2(1,0)),\Tsym_2(\Tsym_1(\Tsym_2(1,0))),\dots$ and $\Tsym_1(0,1),\Tsym_2(\Tsym_1(0,1)),\Tsym_1(\Tsym_2(\Tsym_1(0,1))),\dots$, where $\Tsym_i := \Tsym_i^{\ell_1,\ell_2}: \Z^2 \ra \Z^2$ for $i=1,2$ are involutive symmetries of the scattering diagram $\ks(\calD_{e_1,e_2}^{\ell_1,\ell_2})_\min$, given by
\begin{align*}
\Tsym_1^{\ell_1,\ell_2}(a,b) = (\ell_1 b - a,b) \;\;\;\;\;\text{and}\;\;\;\;\; \Tsym_2^{\ell_1,\ell_2}(a,b) = (a,\ell_2a-b).
\end{align*}
The geometric origin of these symmetries is explained in \cite[\S5]{gross2010quivers} via Theorem~\ref{thm:GPS}, by exhibiting symmetries of the corresponding Gromov--Witten invariants induced by certain birational transformations.
In particular, these form two discrete slope sequences which converge to $\xi_+^{\ell_1,\ell_2}$ and $\xi_-^{\ell_1,\ell_2}$ respectively.
It follows by Corollary~\ref{cor:f_change_formula} that, for each primitive noncolinear $\mm_1,\mm_2 \in \Z^2$ and $\ell_1,\ell_2 \in \Z_{\geq 1}$ with $\ell_1\ell_2 > 4$, the set of outgoing walls in $\ks(\calD_{\mm_1,\mm_2}^{\ell_1,\ell_2})_\min$ which lie outside of the dense region form two sequences which converge to the two boundary rays of the dense region.
For brevity, we will refer to these 
rays lying outside of the dense region as the \hl{discrete rays} of $\ks(\calD_{\mm_1,\mm_2}^{\ell_1,\ell_2})_\min$.

As for the rays inside the dense region, it was conjectured based on empirical evidence that every rational slope appears in the scattering pattern (see e.g \cite[Ex. 1.6]{gross2010tropical} and \cite[Ex. 1.15]{GHKK2018}).
In the special case $\ell_1 = \ell_2$, Gross-Pandharipande proved the above conjecture by exploiting a deep connection with quiver representation theory due to Reineke.\footnote{Namely, Reineke's theorem \cite[Thm. 2.1]{reineke2010poisson} states that the scattering coefficients $\ff_{e_1,e_2}^{\ell,\ell}(a,b)$ are determined by the Euler characteristic of a suitable moduli space of stable framed representations of
 the $\ell$-Kronecker quiver 
with dimension vector $(a,b)$.
A key fact about these moduli spaces is that they have positive Euler characteristic whenever they are empty. 
Reineke--Weist have also generalized this correspondence to the case $\ell_1 \neq \ell_2$, replacing the Kronecker quiver with the complete bipartite quiver on $\ell_1+\ell_2$ vertices (see \cite[Thm. 6.1]{reineke2013refined})} 
\begin{thm}[{\cite[\S4.7]{gross2010quivers}}]\label{thm:gp_nonzero}
  For all $\ell \in \Z_{\geq 3}$ and all primitive $(a,b) \in \Z_{\geq 1}^2$, we have
  $\coef_{\ks(\calD_{e_1,e_2}^{\ell,\ell})_\min}(z^{(a,b)})|_{t=1} \neq 0$.
\end{thm}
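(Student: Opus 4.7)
The plan is to exploit the bridge between the scattering diagram $\ks(\calD_{e_1,e_2}^{\ell,\ell})_{\min}$ and the representation theory of the $\ell$-Kronecker quiver $K_\ell$ (two vertices joined by $\ell$ arrows) discovered by Reineke \cite{reineke2010poisson}. Under this correspondence, the Kontsevich--Soibelman consistency relations that determine $\ks(\calD_{e_1,e_2}^{\ell,\ell})_{\min}$ are identified with the wall-crossing formulas for Harder--Narasimhan filtrations of representations of $K_\ell$ as one varies the stability parameter. Consequently, for each primitive $(a,b) \in \Z_{\geq 1}^2$ whose ray appears in the scattering pattern, the $t=1$ specialization of $\coef_{\ks(\calD_{e_1,e_2}^{\ell,\ell})_{\min}}(z^{(a,b)})$ equals, up to a nonzero universal rational factor, the Euler characteristic $\chi\bigl(M^{\op{st},\theta}_{(a,b)}(K_\ell)\bigr)$ of the moduli space of $\theta$-stable representations of $K_\ell$ with dimension vector $(a,b)$, for any generic stability parameter $\theta$ making $(a,b)$ stable.

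Granting this identification, which is the first step, the problem reduces to showing $\chi\bigl(M^{\op{st},\theta}_{(a,b)}(K_\ell)\bigr) \neq 0$ for every primitive $(a,b) \in \Z_{\geq 1}^2$ and $\ell \geq 3$. The next step is to split into cases according to the Tits form $q(a,b) := a^2 + b^2 - \ell ab$. When $q(a,b) = 1$ (a real Schur root), $(a,b)$ lies on one of the discrete $\Tsym_1,\Tsym_2$-orbit rays emanating from $(1,0)$ or $(0,1)$, and $M^{\op{st},\theta}_{(a,b)}(K_\ell)$ is a single reduced point with $\chi = 1$. When $q(a,b) \leq 0$ (an imaginary Schur root, equivalently $(a,b)$ lies in the dense region), the moduli space is a smooth irreducible projective variety of dimension $1 - q(a,b) \geq 1$, and the central task becomes establishing $\chi \neq 0$.

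The main obstacle is precisely this positivity statement for imaginary roots: there is no a priori reason the Euler characteristic of a high-dimensional smooth projective variety shouldn't vanish. The key input is Reineke's explicit positivity results \cite{reineke2010poisson} for Euler characteristics of Kronecker quiver moduli, which are proved via a Harder--Narasimhan stratification combined with an induction on $|q(a,b)|$ using the Weyl-reflection symmetries $\Tsym_1,\Tsym_2$ of the scattering diagram (these correspond to Bernstein--Gelfand--Ponomarev reflection functors at the level of quiver representations). The base case, near the diagonal $a \approx b$, is handled by exhibiting a torus action with isolated fixed points indexed by a combinatorial datum whose cardinality is manifestly positive; the $\Tsym_1,\Tsym_2$-action of the Weyl group then propagates positivity outward to all imaginary roots, completing the proof.
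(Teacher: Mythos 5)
Your proposal correctly identifies the quiver-theoretic bridge and the role of the Tits form $q(a,b) = a^2+b^2-\ell ab$ in separating real Schur roots (discrete rays) from imaginary roots (dense region). But there is a genuine gap precisely where you say ``the central task becomes establishing $\chi \neq 0$'' and then assert that ``there is no a priori reason the Euler characteristic of a high-dimensional smooth projective variety shouldn't vanish.'' In fact there \emph{is} an a priori reason, and it is the crux of the Gross--Pandharipande / Reineke argument: the relevant moduli space is the moduli $\calM_\ell^{(1,0),B}(a,b)$ of \emph{back-framed} semistable representations, and by \cite{king_walter,reineke2003harder} this is a nonsingular irreducible projective variety with \emph{vanishing odd cohomology} (established via a Harder--Narasimhan/Bia\l ynicki-Birula cell decomposition, or equivalently by point counting over finite fields and the Weil conjectures). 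Consequently $\chi$ equals a sum of nonnegative even Betti numbers, so $\chi>0$ is \emph{equivalent} to nonemptiness, and nonemptiness is a clean numerical criterion $a^2+b^2-\ell ab\le 1$ coming from the Harder--Narasimhan recursion. No further positivity input is needed.

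Your proposed substitute --- a torus-fixed-point computation at a ``base case near the diagonal $a\approx b$'' followed by an ``induction on $|q(a,b)|$ using the Weyl-reflection symmetries $T_1, T_2$'' --- does not work as stated. The reflections $T_1,T_2$ \emph{preserve} the Tits form, so they cannot drive an induction on $|q(a,b)|$; they move you along orbits \emph{within} a level set of $q$, and for an imaginary root there is no canonical representative ``near the diagonal'' to reduce to (the orbits are infinite and do not reach a bounded fundamental domain in a way that would give a base case). The reflection/Weyl symmetry in Gross--Pandharipande's \S 5 is used to organize the scattering pattern and match Gromov--Witten invariants under birational transformations, not to propagate Euler-characteristic positivity. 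A torus-fixed-point argument (\`a la Weist's tree modules) is a legitimate alternative route to nonemptiness in some settings, but it is not the argument in the cited references, and without the vanishing-odd-cohomology input you would still need an independent reason that the resulting fixed-point count is nonzero --- which is exactly the step you have left unjustified. One smaller inaccuracy: the scattering function is matched to Euler characteristics of \emph{framed} moduli spaces (with a nontrivial $\ell/a$ normalization coming from Reineke's wall-crossing identity), not to the unframed stable moduli you describe; for primitive $(a,b)$ the two are related, but the framing is what makes the smooth-projective-with-even-cohomology structure transparent.
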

\NI Strictly speaking, \cite[\S4.7]{gross2010quivers} only proves $\coef_{\ks(\calD_{e_1,e_2}^{\ell,\ell})_\min}(z^{\ka(a,b)})|_{t=1} \neq 0$ for some $\ka \in \Z_{\geq 1}$, but an inspection of their argument shows that we can take $\ka = 1$.

\sss

Going beyond the case $\ell_1 = \ell_2$, the following remarkable positivity phenomenon 
was discovered by Gross--Hacking--Keel--Kontsevich is the course of constructing canonical bases for cluster algebras.

\begin{thm}[{\cite[Prop. C.13]{GHKK2018}}] For all $\ell_1,\ell_2 \in \Z_{\geq 1}$ and primitive $(a,b) \in \Z^2$, we can write
\begin{align*}
 \ff_{e_1,e_2}^{\ell_1,\ell_2}(a,b) = \prod\limits_{\ka=1}^\infty \left(1 + t^{\ka(a+b)}z^{\ka(a,b)} \right)^{c_{\ka}},
 \end{align*}
where $c_\ka \in \Z_{\geq 0}$ for all $\ka \in \Z_{\geq 1}$.
\end{thm}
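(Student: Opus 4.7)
The plan is to exhibit a positive integer factorization by realizing $\calD_{e_1,e_2}^{\ell_1,\ell_2}$ as (the specialization of) a cluster scattering diagram and invoking the main positivity theorems of the GHKK framework. The existence and uniqueness of \emph{some} factorization with rational $c_\ka \in \Q$ is a formal consequence of $(a,b)$ being primitive together with $\ff_{e_1,e_2}^{\ell_1,\ell_2}(a,b) \equiv 1 \pmod{t}$: indeed, by Theorem~\ref{thm:GPS} only monomials of the form $t^{\ka(a+b)}z^{\ka(a,b)}$ appear in $\log \ff_{e_1,e_2}^{\ell_1,\ell_2}(a,b)$ (since $\gcd(a,b)=1$ forces $|\bP_1|=\ka a$ and $|\bP_2|=\ka b$ in the sum indexing), and M\"obius-type inversion then recovers unique rationals $c_\ka \in \Q$. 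The content of the theorem is that each such $c_\ka$ actually lies in $\Z_{\geq 0}$.

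For positivity, I would lift $\calD_{e_1,e_2}^{\ell_1,\ell_2}$ to a principal coefficients scattering diagram $\wt\calD$ in a higher-rank lattice, chosen so that $\wt\calD$ is a cluster scattering diagram for a Kronecker-type cluster algebra with frozen vertices matching the multiplicities $\ell_1,\ell_2$, and so that a coordinate specialization of the principal coefficients to monomials in $t$ recovers $\calD_{e_1,e_2}^{\ell_1,\ell_2}$. The main structural results of GHKK then imply that every wall function of $\wt\calD$ admits a factorization $\prod_m (1+z^m)^{n_m}$ with $n_m \in \Z_{\geq 0}$, via their theta function and broken line machinery (positivity of the canonical basis structure constants). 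Since the specialization from $\wt\calD$ to $\calD_{e_1,e_2}^{\ell_1,\ell_2}$ sends monomials to monomials, as in the change-of-lattice trick from \S\ref{subsec:change_lattice}, this positive factorization descends to the claimed factorization of $\ff_{e_1,e_2}^{\ell_1,\ell_2}(a,b)$.

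The main obstacle is the broken-line positivity for cluster scattering diagrams itself. Integrality of $c_\ka$ alone is a soft statement inside the pronilpotent automorphism group preserving $\ff_{e_1,e_2}^{\ell_1,\ell_2}(a,b)$: the key point is that $\log(1+t^{\ka(a+b)}z^{\ka(a,b)})$ generates a saturated $\Z$-sublattice of the relevant tropical vertex Lie algebra. Nonnegativity, by contrast, genuinely requires a geometric model. In the equal-weight case $\ell_1 = \ell_2$ it reduces to Reineke's theorem via Euler characteristics of smooth projective moduli of framed $\ell$-Kronecker representations with vanishing odd cohomology, exactly as in the sketched proof of Theorem~\ref{thm:gp_nonzero} above. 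For $\ell_1 \neq \ell_2$, positivity is furnished by the theta function canonical basis in GHKK, where nonnegativity is manifest from the definition via counts of broken lines. A technical subtlety to track is that the principal coefficients lift must be arranged so that the preimage of the ray $\R_{\geq 0}\cdot(a,b)$ is a single ray in $\wt\calD$ whose wall function specializes to $\ff_{e_1,e_2}^{\ell_1,\ell_2}(a,b)$; this is handled by the standard enlargement of lattice construction from Appendix C of GHKK, which is essentially the same device applied in \S\ref{subsec:change_lattice}.
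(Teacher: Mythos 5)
This statement is imported verbatim from GHKK \cite[Prop.~C.13]{GHKK2018}; the paper does not reprove it but treats it as a black box, so the relevant comparison is between your sketch and the actual GHKK argument. Your reconstruction gets the scaffolding right (change-of-lattice/enlargement, formal uniqueness of rational exponents by M\"obius-type inversion given $\gcd(a,b)=1$), but the core positivity mechanism you propose is logically inverted. In GHKK the positivity of wall functions in the consistent scattering diagram is an \emph{input} to the theta function and broken line story, not an output of it: broken lines pick up monomial contributions from wall-crossing, so positivity of structure constants for the theta basis is \emph{deduced from} scattering positivity. You cannot cite ``positivity of the canonical basis structure constants'' to establish the factorization of $\ff_{e_1,e_2}^{\ell_1,\ell_2}(a,b)$ without circularity.

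The actual mechanism in GHKK (going back to Gross--Pandharipande--Siebert) is the perturbation trick: deform the initial diagram by splitting each incoming wall with function $(1+tz^{\mm_i})^{\ell_i}$ into $\ell_i$ generic parallel lines each carrying $(1+tz^{\mm_i})$, so that no three walls meet and every local collision is the elementary two-wall scattering $\{(1+z^{u}),(1+z^{v})\}\rightsquigarrow(1+z^{u+v})$, whose output manifestly has the required form; consistency is achieved order by order, and all new wall functions produced inductively are finite products of $(1+\text{monomial})$ with exponent $1$. Letting the perturbation tend to zero collapses these onto rays through the origin and multiplies the factors, giving a product $\prod_\ka(1+t^{\ka(a+b)}z^{\ka(a,b)})^{c_\ka}$ with $c_\ka\in\Z_{\geq 0}$. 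This also shows why your side remark that ``integrality of $c_\ka$ alone is a soft statement'' is wrong: the tropical vertex group is defined over $\Q\ll t\rr$, and a priori the exponents recovered by logarithm and inversion are only rational; integrality is a genuine output of the same geometric argument that gives nonnegativity, not a formal consequence of saturation of a sublattice. Finally, the appeal to Reineke's theorem as an alternative for $\ell_1=\ell_2$ is a valid (and independent) route, as in Theorem~\ref{thm:gp_nonzero}, but it is orthogonal to the GHKK mechanism you were trying to invoke.
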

By combining this positivity result with the scattering diagram deformation techniques from \cite[\S1.4]{gross2010tropical} and an inductive argument, Gr\"afnitz--Luo recently extended Theorem~\ref{thm:gp_nonzero} to the case $\ell_1 \neq \ell_2$.

\begin{thm}[{\cite[Thm. 1]{grafnitz2023scattering}}]\label{thm:GL_nonzero}
 For all $\ell_1,\ell_2 \in \Z_{\geq 1}$ with $\ell_1\ell_2 > 4$ and all primitive $(a,b) \in \Z_{\geq 1}^2$ lying in the dense region, we have $\coef_{\ks(\calD_{e_1,e_2}^{\ell_1,\ell_2})_\min}(z^{(a,b)})|_{t=1} \neq 0$.
\end{thm}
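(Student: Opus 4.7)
The plan is to combine the Gross--Hacking--Keel--Kontsevich positive factorization (stated just above in the excerpt) with the Gross--Pandharipande base case (Theorem~\ref{thm:gp_nonzero}) via a wall-splitting deformation and an induction on $\ell_2 - \ell_1$.

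First I would use GHKK positivity to reduce the nonvanishing of the logarithmic coefficient to a much simpler statement about product factorizations. Writing
\begin{align*}
\ff_{e_1,e_2}^{\ell_1,\ell_2}(a,b) = \prod_{\ka=1}^\infty \left(1 + t^{\ka(a+b)} z^{\ka(a,b)}\right)^{c_\ka}, \qquad c_\ka \in \Z_{\geq 0},
\end{align*}
and noting that only the $\ka=1$ factor can contribute a $z^{(a,b)}$ term to $\log \ff$ when $(a,b)$ is primitive, one gets
\begin{align*}
\coef_{\ks(\calD_{e_1,e_2}^{\ell_1,\ell_2})_\min}(z^{(a,b)})\big|_{t=1} = c_1.
\end{align*}
The problem therefore reduces to showing that the outgoing ray $\R_{\geq 0}\cdot(a,b)$ carries a nontrivial label in $\ks(\calD_{e_1,e_2}^{\ell_1,\ell_2})_\min$, equivalently that $c_1 \geq 1$.

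Next I would assume WLOG $\ell_1 \leq \ell_2$ and induct on $\ell_2 - \ell_1 \geq 0$, with the diagonal base case $\ell_1 = \ell_2$ handled by Theorem~\ref{thm:gp_nonzero}. For the inductive step I would apply the perturbation trick of \cite[\S1.4]{gross2010tropical}: replace the incoming wall $(\R_{\leq 0}\cdot e_2, (1+ty)^{\ell_2})$ with $\ell_2$ parallel walls each labeled $(1+ty)$, placed at nearby generic positions, and similarly split the $e_1$-wall into $\ell_1$ parallel copies. The resulting perturbed diagram is scattering-equivalent through the origin, and sliding one of the split $e_2$-walls off to infinity yields a one-parameter interpolation between the $(\ell_1,\ell_2)$ and $(\ell_1,\ell_2-1)$ cases. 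Via Theorem~\ref{thm:GPS}, an outgoing ray's label translates into a tropical curve count (equivalently, a relative log Gromov--Witten invariant), and the inductive step becomes a comparison of such counts across the family.

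The hard part will be ensuring that no nontrivial ray gets killed by cancellation during the wall-removal deformation. This is exactly the role of GHKK positivity, applied not merely at the two endpoints but uniformly at every intermediate scattering diagram in the family: each outgoing label there factors as $\prod_\ka (1+t^{\ka(a+b)}z^{\ka(a,b)})^{c_\ka(s)}$ with $c_\ka(s)\in\Z_{\geq 0}$, which rules out the sort of sign cancellations that would otherwise destroy a nonvanishing statement. Combined with the monotonicity of the dense region --- as $\ell_2$ grows with $\ell_1$ fixed, $\xi_-^{\ell_1,\ell_2}$ decreases and $\xi_+^{\ell_1,\ell_2}$ increases --- this should propagate nonvanishing from the diagonal case to all rectangular ones with $\ell_1\ell_2>4$. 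The subtlest point is the appearance of brand new rays in $\calD_{e_1,e_2}^{\ell_1,\ell_2}$ which do not come from any smaller diagonal diagram; for these I would combine the deformation argument with the explicit symmetries $T_1^{\ell_1,\ell_2}, T_2^{\ell_1,\ell_2}$ of \cite[\S5]{gross2010quivers} to fold each primitive direction in the dense region into a fundamental domain where the inductive hypothesis applies directly, and that careful bookkeeping near the boundary of the dense region is where the bulk of the technical effort resides.
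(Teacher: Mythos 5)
The paper does not supply its own proof of this statement: it is cited directly from \cite[Thm.~1]{grafnitz2023scattering}, and the only indication of the proof in the present paper is the remark preceding the theorem that Gr\"afnitz--Luo combine GHKK positivity with the deformation techniques of \cite[\S1.4]{gross2010tropical} and an inductive argument. Your outline is consistent with that gloss, and your opening reduction is correct and clean: since $(a,b)$ is primitive, only the $\ka=1$ factor of the GHKK factorization can contribute to the coefficient of $z^{(a,b)}$ in $\log\ff$, so nonvanishing at $t=1$ is equivalent to $c_1 \geq 1$.

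That said, two of the later steps as you state them leave real gaps. First, ``sliding a split $e_2$-wall off to infinity'' is not a well-defined operation for scattering diagrams concentrated at the origin; the perturbation in \cite[\S1.4]{gross2010tropical} replaces walls by lines through generic base points to localize the scattering analysis, and to actually compare $\ks(\calD_{e_1,e_2}^{\ell_1,\ell_2})_\min$ with $\ks(\calD_{e_1,e_2}^{\ell_1,\ell_2-1})_\min$ one would have to introduce separate formal parameters on the $\ell_2$ split walls and show that GHKK positivity is compatible with specializing one of them to zero --- a refinement of the positivity statement that you invoke but do not formulate. Second, and more seriously, the dense region strictly grows as $\ell_2$ increases with $\ell_1$ fixed, so your induction from $(\ell_1,\ell_2-1)$ to $(\ell_1,\ell_2)$ leaves a nonempty open wedge of primitive slopes uncovered near the new boundary rays $\R_{\geq 0}\cdot(1,\xi^{\ell_1,\ell_2}_\pm)$. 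The proposed remedy of folding via $\Tsym_1^{\ell_1,\ell_2},\Tsym_2^{\ell_1,\ell_2}$ would require a precise statement about the orbit structure of that infinite dihedral-type group inside the open dense cone, and this is delicate precisely because the boundary rays are accumulation points of both the discrete rays and the symmetry orbits, so it is not obvious the folding terminates inside the region where the inductive hypothesis applies. These are exactly the places where the argument in \cite{grafnitz2023scattering} must do real work; the present paper provides no further detail against which to check whether your proposed resolutions match theirs.
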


Together with the change of lattice trick as in Corollary~\ref{cor:COL_calD}, we get:
\begin{cor}
 For any primitive noncolinear $\mm_1,\mm_2 \in \Z^2$, we have $\coef_{\ks(\calD_{\mm_1,\mm_2}^{\ell_1,\ell_2})_\min}(z^\mm)|_{t=1} \neq 0$ whenever $\mm = a\mm_1 + b\mm_2$ for some $a,b \in \Z_{\geq 1}$ such that $(a,b)$ lies in the dense region of $\ks(\calD_{e_1,e_2}^{\nu(\mm_1)\ell_1,\nu(\mm_2)\ell_2})_\min$, i.e. whenever $\xi_-^{\nu(\mm_1)\ell_1,\nu(\mm_2)\ell_2} < b/a < \xi_+^{\nu(\mm_1)\ell_1,\nu(\mm_2)\ell_2}$.
\end{cor}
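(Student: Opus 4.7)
The plan is to combine the change-of-lattice formula of Corollary~\ref{cor:COL_calD} with the Gr\"afnitz--Luo nonvanishing result Theorem~\ref{thm:GL_nonzero} in a purely formal manner. Since the hypothesis places $(a,b)$ in the dense region of $\ks(\calD_{e_1,e_2}^{\nu(\mm_1)\ell_1,\nu(\mm_2)\ell_2})_\min$, in particular the dense region is nonempty, which forces $\nu(\mm_1)\ell_1 \cdot \nu(\mm_2)\ell_2 > 4$; this is exactly the hypothesis needed to invoke Theorem~\ref{thm:GL_nonzero}.

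First, I would reduce to the case that $(a,b)$ is primitive in $\Z^2$. If $d = \gcd(a,b) \geq 1$ and $(a',b') = (a/d,b/d)$, then the slope $b'/a' = b/a$ is unchanged, so $(a',b')$ still lies in the dense region and is primitive. Since $\mm = a\mm_1 + b\mm_2$ and $a'\mm_1 + b'\mm_2$ span the same ray in $\R^2$, the functions $\ff^\out_\mm$ and $\ff^\out_{a'\mm_1 + b'\mm_2}$ (the labels of that outgoing ray in $\ks(\calD_{\mm_1,\mm_2}^{\ell_1,\ell_2})_\min$) coincide, and the same is true on the $e_1,e_2$ side. So it suffices to prove the statement when $(a,b)$ is primitive.

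Next, I would apply Theorem~\ref{thm:GL_nonzero} directly to the pair $(\nu(\mm_1)\ell_1,\nu(\mm_2)\ell_2)$ and the primitive vector $(a,b) \in \Z_{\geq 1}^2$ lying in the dense region: this yields
\begin{align*}
\coef_{\ks(\calD_{e_1,e_2}^{\nu(\mm_1)\ell_1,\nu(\mm_2)\ell_2})_\min}(z^{(a,b)})\big|_{t=1} \neq 0.
\end{align*}
Finally, Corollary~\ref{cor:COL_calD} translates this into the desired statement: since $\mm = a\mm_1 + b\mm_2 \in \langle \mm_1,\mm_2\rangle$, we have
\begin{align*}
\coef_{\ks(\calD_{\mm_1,\mm_2}^{\ell_1,\ell_2})_\min}(z^\mm)\big|_{t=1} = \tfrac{1}{\nu(\mm)}\,\coef_{\ks(\calD_{e_1,e_2}^{\nu(\mm_1)\ell_1,\nu(\mm_2)\ell_2})_\min}(z^{(a,b)})\big|_{t=1},
\end{align*}
and since $\nu(\mm) \in \Z_{\geq 1}$ is a positive integer, the left-hand side is also nonzero.

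There is essentially no genuine obstacle here: the work is entirely carried by the two cited results. The only minor subtlety is verifying that dense-region membership passes to the primitive reduction (immediate, since the region is a cone) and checking that the finiteness-of-partitions argument from \S\ref{subsec:wp_and_GW} or the definitional setup of $\coef$ legitimately permits the $t=1$ specialization, but this is guaranteed here because $\mm_1,\mm_2$ being noncolinear means the cone $\cone(\mm_1,\mm_2)$ is strongly convex and there are only finitely many partitions $(\bP_1,\bP_2)$ with $|\bP_1|\mm_1 + |\bP_2|\mm_2$ equal to any given vector.
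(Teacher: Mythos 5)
Your core argument is exactly the paper's (one-line) proof: the corollary is stated immediately after the remark ``Combining Theorem~\ref{thm:GL_nonzero} with the change of lattice trick as in Corollary~\ref{cor:COL_calD} gives:'', and your invocation of those two results in that order, together with the observations that the existence of a dense region forces $\nu(\mm_1)\ell_1\cdot\nu(\mm_2)\ell_2>4$ and that strong convexity makes $\coef$ a polynomial in $t$ so that $t=1$ specialization is licit, is precisely what is intended.

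However, your ``reduction to the primitive case'' is flawed and should be dropped. You correctly note that $\mm=a\mm_1+b\mm_2$ and $\mm'=a'\mm_1+b'\mm_2$ (with $(a',b')=(a,b)/\gcd(a,b)$) span the same ray, so the associated ray-labels coincide; but $\coef_{\ks(\calD)_\min}(z^\mm)$ and $\coef_{\ks(\calD)_\min}(z^{\mm'})$ are the coefficients of \emph{different} monomials in $\log$ of that \emph{single} label, so nonvanishing of the latter (which is what Theorem~\ref{thm:GL_nonzero} provides for primitive $(a,b)$) says nothing about nonvanishing of the former. A correct reduction would require a multiple-cover statement beyond what Theorem~\ref{thm:GL_nonzero} asserts. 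Fortunately this issue is moot: in every downstream use (e.g.\ via $\wp_X(p,q)$ with $\gcd(p,q)=1$) the vector $\mm$ is $\Z^2$-primitive, which forces $(a,b)$ to be primitive, so Theorem~\ref{thm:GL_nonzero} applies directly with no reduction step. Reading the corollary with the implicit hypothesis that $(a,b)$ is primitive --- which the paper's terse phrasing presumably intends --- your argument, minus the spurious reduction paragraph, is complete and matches the intended proof.
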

\NI Here as before we put $\nu(\mm_i) = \ind\left(\ann_{\Z^2}(\mm_i) \subset \ann_{\NN'}(\mm_i)\right)$, with $\NN'$ the dual lattice of $\MM' := \lan \mm_1,\mm_2\ran \subset \MM = \Z^2$,
and $\xi_{\pm}^{\bullet,\bullet}$ are as in \eqref{eq:xipm}.

\subsection{Well-placed curves from basic scattering diagrams}\label{subsec:wp_and_basic_sd}

Now suppose that $(X,\calN)$ is a uninodal Looijenga pair with a toric model $\tormod$
having data $\mm_1,\dots,\mm_J$ and $\ell_1,\dots,\ell_J$ as in Notation~\ref{not:tormod_m_and_l}, and recall that the function $\wp_X$ sets up a bijection from $\Z^2_{\geq 0} / \sim$ to $\Z^2$.
Let us further restrict to the case $J = 2$, and consider the sublattice $\lan \mm_1,\mm_2\ran \subset \Z^2$ spanned by $\mm_1,\mm_2$.
Observe that the results from \S\ref{subsec:scat_pos} together with Corollary~\ref{cor:N_logcoef_form} give a complete description of those coprime $p,q \in \Z_{\geq 1}$ for which the count
$N_{X,\calN}(p,q)$ of well-placed curves is nonzero (and hence a positive integer). 
\begin{cor}\label{cor:uninodal_rays}
Let $(X,\calN)$ be a uninodal Looijenga pair with a toric model $\tormod$ having $J=2$.
For any coprime $p,q \in \Z_{\geq 1}$, we have $N_{X,\calN}(p,q) = 0$ unless $\wp_X(p,q) \in \lan \mm_1,\mm_2\ran$. 
For $\wp_X(p,q) \in \lan \mm_1,\mm_2\ran$, we have 
$N_{X,\calN}(p,q) \neq 0$ if and only of one if the following holds:
\begin{itemize}
  \item $\wp_X(p,q)$ is a discrete ray of $\ks(\calD_\tormod)_\min$
  \item $\wp_X(p,q)$ lies in the dense region of $\ks(\calD_\tormod)_\min$.
\end{itemize}
\end{cor}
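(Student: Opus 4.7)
The plan is to assemble the reductions already in hand and invoke the scattering positivity results from \cite{gross2010quivers,grafnitz2023scattering}. The first statement --- that $N_{X,\calN}(p,q) = 0$ whenever $\wp_X(p,q) \notin \langle \mm_1,\mm_2\rangle$ --- is immediate from Corollary~\ref{cor:N_logcoef_form}.

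Suppose now $\wp_X(p,q) = a\mm_1 + b\mm_2$ for some $a,b \in \Z$, and set $\ell'_i := \nu(\mm_i)\ell_i$. Corollary~\ref{cor:N_logcoef_form} gives
\[
N_{X,\calN}(p,q) \;=\; \tfrac{1}{\nu(\wp_X(p,q))}\,\coef_{\ks(\calD_{e_1,e_2}^{\ell'_1,\ell'_2})_\min}\!\big(z^{\phi(\wp_X(p,q))}\big)\big|_{t=1},
\]
where $\phi(\mm_i)=e_i$, so the problem reduces to characterizing when this $t=1$ specialization is nonzero. I would argue first that it is nonzero if and only if the ray $\R_{\geq 0}\cdot(a,b)$ carries a nontrivial label in $\ks(\calD_{e_1,e_2}^{\ell'_1,\ell'_2})_\min$. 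Since $J=2$ with $\mm_1,\mm_2$ noncolinear makes $\tormod$ automatically strongly convex, Corollary~\ref{cor:curves_count_pos} applies, and Theorem~\ref{thm:GPS} then expresses the $\log$ coefficient as a $\Z_{\geq 0}$-linear combination of monomials in $t$. Thus the coefficient lies in $\Z_{\geq 0}[t]$, and its vanishing as a polynomial is equivalent to its vanishing at $t=1$, which in turn is equivalent to the attached outgoing wall label being trivial. The special cases $\wp_X(p,q) = -\mm_i$ can be handled separately exactly as in the last paragraph of the proof of Theorem~\ref{thm:wp_and_sd}, giving $\coef = \ell_i t \neq 0$, consistent with $-\mm_i$ being a discrete ray in the sense of \S\ref{subsec:scat_pos}.

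It then remains to identify which primitive $(a,b)$ give rays of $\ks(\calD_{e_1,e_2}^{\ell'_1,\ell'_2})_\min$. In the nondegenerate regime $\ell'_1 \ell'_2 > 4$, \cite[Thm.~5]{gross2010quivers} describes the outgoing rays outside the dense region as precisely the $\Tsym_1,\Tsym_2$-orbits of $(1,0)$ and $(0,1)$ --- these are by definition the discrete rays --- while Theorem~\ref{thm:GL_nonzero} ensures that every primitive ray in the dense region appears with nontrivial label. Transporting back under $\phi$ yields the claimed characterization in terms of the scattering pattern of $\ks(\calD_\tormod)_\min$ itself. The degenerate cases $\ell'_1 \ell'_2 \leq 4$ are simpler: the dense region is empty (or collapses to a single slope at $\ell'_1 \ell'_2 = 4$), the scattering pattern is finite and classical, and every outgoing wall may be regarded as discrete, so the statement still holds.

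The whole argument is essentially bookkeeping; the only genuinely subtle point is the passage between polynomial nonvanishing and nonvanishing at $t=1$, which relies crucially on the positivity in Corollary~\ref{cor:curves_count_pos}. Without positivity, one could in principle imagine cancellations among $\Q$-valued Gromov--Witten counts masking a ray that does appear. No other serious obstacle is anticipated.
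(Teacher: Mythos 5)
Your proposal is correct and takes essentially the same route as the paper, which presents this corollary without a displayed proof, treating it as the immediate observation that Corollary~\ref{cor:N_logcoef_form} reduces the question to the scattering diagram $\ks(\calD_{e_1,e_2}^{\nu(\mm_1)\ell_1,\nu(\mm_2)\ell_2})_\min$, where the results of \S\ref{subsec:scat_pos} characterize the nonvanishing rays.

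One small imprecision worth noting: the chain ``vanishing of $\coef$ as a polynomial $\Leftrightarrow$ vanishing at $t=1$ $\Leftrightarrow$ the outgoing wall label $\ff_\mm$ is trivial'' overclaims in the last step. A priori, $\ff_\mm \neq 1$ only guarantees some $c_\ka \neq 0$ in the GHKK factorization $\ff_\mm = \prod_\ka(1+t^{\ka(a+b)}z^{\ka\mm})^{c_\ka}$, whereas $\coef_{\ks(\calD)_\min}(z^\mm)$ sees only $c_1$; in principle a nontrivial wall could have $c_1 = 0$ with some higher $c_\ka > 0$. This does not damage the proof, because you never actually use that equivalence: for rays outside the dense region, \cite[Thm.~5]{gross2010quivers} (together with the explicit labels $(1+tz^{\mm})^{\ell_i}$ transported by $T_1,T_2$) gives $\coef|_{t=1} > 0$ directly, and Theorem~\ref{thm:GL_nonzero} gives $\coef|_{t=1}\neq 0$ directly on the dense region, while the absence of a wall trivially gives $\coef = 0$. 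So the positivity from Corollary~\ref{cor:curves_count_pos} is not strictly needed for this corollary (it is used elsewhere to conclude $N_{X,\calN}(p,q) \in \Z_{\geq 0}$); the intermediate ``nontrivial label'' characterization can be dropped without loss.
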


Of particular interest is the case when  $X$ is a rigid del Pezzo surface and $\tormod = \tormod_X$ is the corresponding toric model from \S\ref{subsec:rigid_dP_tor_mods}.
In this situation, recall that the ellipsoid embedding function $c_X$ contains an infinite staircase such that the $x$-values of the outer corners accumulate at a point $a_\acc^X \in \R_{>1}$.
The next lemma shows that, under the bijection $\wp_X$, the outer corners  precisely match up with the discrete part of the corresponding scattering diagram, while those $p/q$ beyond the accumulation point correspond to the dense region.

\begin{lemma}\label{lem:corners_match_sd}
Let $X$ be a rigid del Pezzo surface with its toric model $\tormod_X$ and associated data $\mm_1,\dots,\mm_J$ and $\ell_1,\dots,\ell_J$ as in \S\ref{subsec:rigid_dP_tor_mods}, and assume further that $J=2$.\footnote{In other words, $X = \CP^1 \times \CP^1$ or $X = \bl^j\CP^2$ for $j \in \{0,3,4\}$.}
For any primitive $(p,q) \in \Z_{\geq 1}^2$, we have
  \begin{itemize}
        \item $\R_{\geq 0} \cdot \wp_X(p,q)$ is a discrete ray of $\ks(\calD_{\mm_1,\mm_2}^{\ell_1,\ell_2})_\min$ if and only if $p/q$ or $q/p$ is the $x$-value of an outer corner of the infinite staircase $c_X|_{[1,a_\acc^X]}$
    \item $\wp_X(p,q)$ lies in the dense region of $\ks(\calD_{\mm_1,\mm_2}^{\ell_1,\ell_2})_\min$ if and only if $p/q$ or $q/p$ lies in $(a_\acc^X,\infty)$.
  \end{itemize}

\end{lemma}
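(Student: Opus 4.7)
The plan is to verify the correspondence case-by-case for the four rigid del Pezzo surfaces with $J=2$, namely $X \in \{\CP^2,\CP^1\times\CP^1,\bl^3\CP^2,\bl^4\CP^2\}$. For each such $X$, using the explicit data $(\mm_1,\mm_2,\ell_1,\ell_2)$ of Table~\ref{table:fund_bij} and the dual-lattice index formula \eqref{eq:nu_def}, I would first compute the effective scattering parameters $\nu_i\ell_i$ where $\nu_i := \nu(\mm_i)$; by Corollary~\ref{cor:COL_calD}, the rays of $\ks(\calD_{\mm_1,\mm_2}^{\ell_1,\ell_2})_\min$ correspond, via the linear map $\phi \colon \mm_i \mapsto e_i$, to those of the simplified basic diagram $\ks(\calD_{e_1,e_2}^{\nu_1\ell_1,\nu_2\ell_2})_\min$, whose dense region and discrete rays are explicitly understood through \S\ref{subsec:scat_pos}.

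For the dense region assertion, I would substitute the piecewise formula for $\wp_X$ from Table~\ref{table:fund_bij} into $\phi$ and compute the slope $b/a$ of $(a,b) := \phi(\wp_X(p,q))$ as a Möbius function of $p/q$. Setting $b/a = \xi_\pm^{\nu_1\ell_1,\nu_2\ell_2}$ and solving should yield precisely $p/q = a_\acc^X$ (respectively $q/p = a_\acc^X$). For instance, for $X = \CP^2$ we have $(\nu_1\ell_1,\nu_2\ell_2) = (3,3)$ and $\xi_\pm = (3\pm\sqrt{5})/2$; evaluating $(a,b) = ((p-2q)/3,(p-5q)/3)$ at $p/q = \tau^4 = (7+3\sqrt{5})/2$ yields $b/a = (3-\sqrt{5})/2 = \xi_-$, and by monotonicity the values $p/q \in (\tau^4,\infty)$ trace out slopes in $(\xi_-,1) \subset (\xi_-,\xi_+)$. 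A symmetric analysis on the $p < q$ branch handles the approach to $\xi_+$, and the three remaining surfaces are handled by the same elementary calculation with the corresponding entries of Table~\ref{table:fund_bij}.

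For the discrete ray portion, I would invoke \cite[Thm.~5]{gross2010quivers}, which identifies the discrete rays of $\ks(\calD_{e_1,e_2}^{\ell_1',\ell_2'})_\min$ as the orbits of the seed rays $(1,0),(0,1)$ under the involutions $\Tsym_1^{\ell_1',\ell_2'}$ and $\Tsym_2^{\ell_1',\ell_2'}$. I would then verify directly that the Möbius actions of $\Tsym_1,\Tsym_2$ on slopes correspond, under the conjugation $\phi\circ\wp_X$, to the staircase recursion $\Ssym_X(x) = K - 1/x$ together with the branch swap $(p,q)\mapsto(q,p)$; this matching is essentially dictated by the trace coincidence $\op{tr}(\Tsym_2\Tsym_1) = \nu_1\nu_2\ell_1\ell_2 - 2 = K$, which holds in each of the four cases. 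Finally, the two minimal seeds identified following Lemma~\ref{lem:rev_CS} evaluate under $\phi\circ\wp_X$ to the two initial rays $(1,0),(0,1)$, and propagating the symmetry-intertwining identifies the full orbits of outer corners with the full orbits of discrete rays. The principal obstacle is bookkeeping: since both $\wp_X$ and the iterated $\Ssym_X$-action are piecewise, one must track which piece applies at each iteration to ensure the intertwining relation holds on the correct subregion, but this is ultimately a finite surface-by-surface verification.
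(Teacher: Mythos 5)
The paper does not supply a proof of this lemma at all; the statement is followed immediately by Corollary~\ref{cor:dense_beyond_acc} and then by an Example that checks the dense-region assertion only for $X=\CP^2$ and only along the two boundary rays. So there is no ``paper proof'' to compare against: the lemma is treated as a routine finite verification, and your proposal supplies essentially the argument that the authors have in mind. In particular, your plan to pass through the simplified diagram $\ks(\calD_{e_1,e_2}^{\nu_1\ell_1,\nu_2\ell_2})_\min$ via Corollary~\ref{cor:COL_calD} is exactly the reduction the paper's Example performs (with $\psi=\phi^{-1}$), and your slope computation for $\CP^2$ is correct: with $(a,b)=\phi(\wp_{\CP^2}(p,q))=((p-2q)/3,(p-5q)/3)$ on the $p/q>2$ branch, one has $b/a=(p/q-5)/(p/q-2)$, which increases from $\xi_-^{3,3}$ to $1$ as $p/q$ runs over $(\tau^4,\infty)$, and the $p/q<1/2$ branch covers $(1,\xi_+^{3,3})$ symmetrically. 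The trace identity $\operatorname{tr}(\Tsym_2\Tsym_1)=\nu_1\ell_1\cdot\nu_2\ell_2-2=K$ is a genuinely useful observation that the paper never states: since $\Tsym_2\Tsym_1\in\operatorname{SL}_2(\Z)$ satisfies $(\Tsym_2\Tsym_1)^2=K(\Tsym_2\Tsym_1)-\operatorname{Id}$, the orbit of any seed under repeated $\Tsym_i$-applications obeys the same two-term linear recursion with coefficient $K$ that governs the staircase numerics, which is exactly why the discrete rays and the outer corners march in step. This gives a cleaner and more uniform explanation than a brute-force surface-by-surface check.

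Two small points deserve care when you carry this out. First, your claim that the ``two minimal seeds evaluate under $\phi\circ\wp_X$ to $(1,0),(0,1)$'' needs to be stated precisely: for the lowest outer corner of $\CP^2$ (namely $\be_0=2$, i.e.\ $(p,q)=(2,1)$) one computes $\wp_{\CP^2}(2,1)=-\mm_2$, whose ray is the \emph{incoming} wall $\R_{\leq 0}\cdot\mm_2$, and for $\be_1=5$ one gets $\wp_{\CP^2}(5,1)=\mm_1$, which is the outgoing wall along the coordinate axis with $b=0$. Neither of these is of the form $(a,b)\in\Z_{\geq 1}^2$, so they fall slightly outside the literal definition of ``discrete ray'' quoted from \cite[Thm.~5]{gross2010quivers}; the paper implicitly admits the two coordinate rays (with their nontrivial labels $(1+tz^{\mm_i})^{\ell_i}$) as discrete rays, and this is harmless since the downstream use of the lemma (Corollary~\ref{cor:dense_beyond_acc}) concerns only $p/q>a_\acc^X$. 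Second, the piecewise bookkeeping you flag is real: for several surfaces the relevant branch of $\wp_X$ changes at integer values of $p/q$ strictly below $a_\acc^X$, so the monotonicity argument must be applied branch by branch, with the intermediate branches producing points of $\phi(\wp_X(p,q))$ lying outside the closed positive quadrant (hence vacuously neither in the dense region nor a discrete ray). Once these are handled, the proposal is complete and correct.
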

\begin{cor}\label{cor:dense_beyond_acc}
 In the setting of Lemma~\ref{lem:corners_match_sd}, for a reduced fraction $p/q \in (a_\acc^X,\infty)$ we have $N_{X,\calN}(p,q) \neq 0$ if and only if $\wp_X(p,q) \in \lan \mm_1,\mm_2\ran$. 
\end{cor}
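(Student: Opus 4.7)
The plan is to observe that this corollary is a direct combination of the two preceding results, Corollary~\ref{cor:uninodal_rays} and Lemma~\ref{lem:corners_match_sd}, once we check that in the range $p/q > a_\acc^X$ only one of the two alternatives in Lemma~\ref{lem:corners_match_sd} can occur.

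For the ``only if'' direction, nothing beyond Corollary~\ref{cor:uninodal_rays} is needed: that corollary already asserts that $N_{X,\calN}(p,q) = 0$ whenever $\wp_X(p,q) \notin \lan \mm_1,\mm_2\ran$, so the contrapositive is immediate.

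For the ``if'' direction, suppose $p/q \in (a_\acc^X,\infty)$ is reduced and $\wp_X(p,q) \in \lan\mm_1,\mm_2\ran$. The first step is to rule out the possibility that $q/p$, rather than $p/q$, is the quantity lying in $(a_\acc^X,\infty)$ in Lemma~\ref{lem:corners_match_sd}: since $a_\acc^X > 1$ for every rigid del Pezzo surface (see \S\ref{sec:symmetries}), the assumption $p/q > a_\acc^X$ forces $q/p < 1 < a_\acc^X$, so only $p/q$ can lie in $(a_\acc^X,\infty)$. Applying the second bullet of Lemma~\ref{lem:corners_match_sd} then places $\wp_X(p,q)$ in the dense region of $\ks(\calD_{\mm_1,\mm_2}^{\ell_1,\ell_2})_\min$. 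By Corollary~\ref{cor:uninodal_rays}, which itself packages the scattering-positivity input (Theorem~\ref{thm:GL_nonzero} together with the change of lattice trick in Corollary~\ref{cor:N_logcoef_form}), it follows that $N_{X,\calN}(p,q) \neq 0$.

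Since every genuine content of the argument has already been established, there is no real technical obstacle; the only point requiring verification is the clean dichotomy $p/q$ versus $q/p$ noted above, which is why the hypothesis $p/q > a_\acc^X$ (rather than merely $p/q \neq q/p$ lying outside $[1,a_\acc^X]$) is used.
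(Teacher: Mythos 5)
Your proof is correct and is essentially the same argument the paper intends: Corollary~\ref{cor:dense_beyond_acc} is presented without separate proof precisely because it follows by combining Corollary~\ref{cor:uninodal_rays} with the second bullet of Lemma~\ref{lem:corners_match_sd}. The only cosmetic point is that the step ruling out ``$q/p$ rather than $p/q$ lying in $(a_\acc^X,\infty)$'' is superfluous for the ``if'' direction, since the lemma's hypothesis is an inclusive ``or'' and $p/q>a_\acc^X$ alone already places $\wp_X(p,q)$ in the dense region; this observation is harmless but not needed.
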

\NI Inspecting Table~\ref{prop:fund_bij}, $\wp_X(p,q) \in \lan \mm_1,\mm_2\ran$ is equivalent to $p+q \equiv 0 \text{ mod } 3$ in the case $X = \CP^2$, $p+q \equiv 0 \text{ mod } 2$ in the case $X = \CP^1 \times \CP^1$, and it is 
always satisfied for $X = \bl^j \CP^2$ with $j \in \{3,4\}$.

\begin{proof}[Proofs of Theorem~\ref{thmlet:sesqui_plane_curves} and Theorem~\ref{thmlet:curves_in_X}]

Theorem~\ref{thmlet:sesqui_plane_curves} and Theorem~\ref{thmlet:curves_in_X}(a) in the cases $J=2$ (in particular for $X = \CP^1 \times \CP^1$) follow immediately by combining Corollary~\ref{cor:uninodal_rays} and Lemma~\ref{lem:corners_match_sd}.
The remaining cases of Theorem~\ref{thmlet:curves_in_X} were deduced from these in \S\ref{sec:symmetries}.
\end{proof}

\begin{rmk}\label{rmk:precise_density}
The above argument actually gives a precise description of $\Sset_X$ for the rigid del Pezzo surfaces with $J=2$ strands (the case $X=\CP^2$ is already covered by Theorem~\ref{thmlet:sesqui_plane_curves}).
More precisely, for any reduced fraction $p/q$, we have:
\begin{itemize}
  \item if $X = \CP^1 \times \CP^1$ and $p/q > a_\acc^X$, there is a $(p,q)$-well-placed curve in $X$ if and only if $p+q \equiv 0\text{ mod } 2$
  \item if $X = \bl^j\CP^2$ with $j \in \{3,4\}$ and $p/q > a_\acc^X$, there is a $(p,q)$-well-placed curve in $X$.
\end{itemize}
Furthermore, using the symmetry argument in \S\ref{sec:symmetries}, we have:
\begin{itemize}
  \item if $X$ is a blowup of $\CP^2$ at $k \geq 5$ points which are either very general or lie in the smooth locus of a fixed nodal cubic, then we have $\Sset_X = [1,\infty) \cap \Q$.
\end{itemize}
It should be also possible to give sharp descriptions in the $J=3$ cases (i.e. $X = \bl^j\CP^2$ for $j=1,2$) by extending the analysis in \cite{grafnitz2023scattering} to scattering diagrams with more than two initial rays.
Incidentally, we could also apply Theorem~\ref{thm:wp_and_sd} in the reverse direction, in order to deduce structural results for certain scattering diagrams with three or more incoming rays via Theorem~\ref{thmlet:curves_in_X}.
\end{rmk}

\begin{example}
  Recall that in the case $X = \CP^2$ the accumulation point is $a_\acc^X = \tau^4 = \tfrac{7+3\sqrt{5}}{2}$.
Using the fundamental bijection in \eqref{eq:fundbijCP2},
   each reduced fraction $p/q \geq 2$ corresponds to a ray
  \[
  \R_{\geq 0} \cdot \wp_{\CP^2}(p,q) = \R_{\geq 0} \cdot (q,5q-p) = \R_{\geq 0} \cdot (1,5-p/q),
  \]
  while fractions with $p/q \le 1/2$ correspond to rays $\R_{\geq 0} \cdot (-1, 2-q/p)$.
  In particular, for $p/q$ approximating $\tau^4$, the corresponding ray approximates 
  $\R_{\geq 0} \cdot (1,5-\tau^4)$,
while, for $p/q$ approximating $1/\tau^4$, the corresponding ray approximates $\R_{\geq 0} \cdot (-1,2-\tau^4)$.

Meanwhile, as in Example~\ref{ex:nu_CP2}, the dense region of the corresponding scattering diagram $\ks(\calD_{\mm_1,\mm_2}^{1,1})_\min$ is the image of the cone spanned by $(1,\xi_\pm^{3,3})$ under the linear map $\psi: \R^2 \ra \R^2$ sending $e_1$ to $\mm_1 = (1,0)$ and $e_2$ to $\mm_2 = (-1,-3)$, 
where $\xi_\pm^{3,3} = \tfrac{1}{2}(3 \pm \sqrt{5})$ as in \eqref{eq:xipm}. 
After some arithmetic, this is the same as the cone spanned by the rays $\R_{\geq 0} \cdot (1,5-\tau^4)$ and $\R_{\geq 0} \cdot (-1,2-\tau^4)$.
In other words, under the fundamental bijection, $(p,q)$-well-placed curves with $p/q > \tau^4$ or $p/q < 1/\tau^4$ fill out two corresponding halves of the dense region of $\ks(\calD_{\mm_1,\mm_2}^{1,1})_\min$, while
$(p,q)$-well-placed curves with $p = 0$ or $q = 0$ (i.e. those intersecting $\calN$ in a single point away from the node as in Convention~\ref{conv:pq=0}) correspond to the ``central ray'' $\R_{\geq 0} \cdot (0,-1)$ of the dense region. 
\end{example}

\begin{rmk}
Under the bijection $\wp_X$, the transformations $\Phi_X,\Psi_X$ discussed in \S\ref{sec:symmetries} are closely related to the scattering symmetries $\Tsym_1,\Tsym_2$ considered in \cite[\S5]{gross2010quivers}.
In fact, the change of lattice formula in  Lemma~\ref{lem:subscatter} can be understood more geometrically in terms of finite degree toric morphisms induced by passing to finite index sublattices for the relevant fans. 
Using this perspective, it is possible to view the birational transformations underlying $\Tsym_1,\Tsym_2$ as automorphisms of the universal cover of $X \setminus \calN$, corresponding to ``twisted square roots'' of $\Phi_X,\Psi_Y$ in the sense of \cite{kollar2024cubic} (see e.g. \cite[Ex. 1]{kollar2024cubic} for the case $X = \CP^2$).
We will elaborate on the symmetries of uninodal Looijenga pairs and scattering diagrams in a followup paper.
\end{rmk}

\renewbibmacro{in:}{}
\printbibliography

\end{document}